\theoremstyle{plain}
\newtheorem{theorem}{Theorem}[section]
\newtheorem{lemma}[theorem]{Lemma}
\newtheorem{corollary}[theorem]{Corollary}
\newtheorem{proposition}[theorem]{Proposition}
\newtheorem{observation}[theorem]{Observation}
\newtheorem{remark}[theorem]{Remark}
\newtheorem{question}[theorem]{Question}
\theoremstyle{definition}
\newcommand{\code}{\textnormal{code}}
\def\finf{\mathop{{\rm I}\kern -.27 em {\rm F}}\nolimits}
\newcommand{\edim}{\textnormal{edim}}
\begin{document}


\title{On the edge dimension and fractional edge dimension of graphs}

\author{{\bf{Eunjeong Yi}}\\
\small Texas A\&M University at Galveston, Galveston, TX 77553, USA\\
{\small\em yie@tamug.edu}}

\maketitle

\date{}

\begin{abstract}
Let $G$ be a graph with vertex set $V(G)$ and edge set $E(G)$, and let $d(u,w)$ denote the length of an $u-w$ geodesic in $G$. For any vertex $v\in V(G)$ and any edge $e=xy\in E(G)$, let $d(e,v)=\min\{d(x,v), d(y,v)\}$. For any distinct edges $e_1, e_2\in E(G)$, let $R\{e_1,e_2\}=\{z\in V(G): d(z,e_1) \neq d(z,e_2)\}$. Kelenc, Tratnik and Yero [Discrete Appl. Math. 251 (2018) 204-220] introduced the notion of an edge resolving set and the edge dimension of a graph: A vertex subset $S\subseteq V(G)$ is an \emph{edge resolving set} of $G$ if $|S \cap R\{e_1, e_2\}|\ge 1$ for any distinct edges $e_1, e_2\in E(G)$, and the \emph{edge dimension}, $\edim(G)$, of $G$ is the minimum cardinality among all edge resolving sets of $G$.

For a function $g$ defined on $V(G)$ and for $U\subseteq V(G)$, let $g(U)=\sum_{s\in U}g(s)$. A real-valued function $g: V(G) \rightarrow [0,1]$ is an \emph{edge resolving function} of $G$ if $g(R\{e_1, e_2\})\ge1$ for any distinct edges $e_1, e_2\in E(G)$. The \emph{fractional edge dimension}, $\edim_f(G)$, of $G$ is $\min\{g(V(G)): g \mbox{ is an edge resolving function of }G\}$. Note that $\edim_f(G)$ reduces to $\edim(G)$ if the codomain of edge resolving functions is restricted to $\{0,1\}$.

In this paper, we introduce and study the fractional edge dimension of graphs, and we obtain some general results on the edge dimension of graphs. We show that there exist two non-isomorphic graphs on the same vertex set with the same edge metric coordinates. We construct two graphs $G$ and $H$ such that $H \subset G$ and both $\edim(H)-\edim(G)$ and $\edim_f(H)-\edim_f(G)$ can be arbitrarily large. We show that a graph $G$ with $\edim(G)=2$ cannot have $K_5$ or $K_{3,3}$ as a subgraph, and we construct a non-planar graph $H$ satisfying $\edim(H)=2$. It is easy to see that, for any connected graph $G$ of order at least three, $1 \le \edim_f(G) \le \frac{|V(G)|}{2}$; we characterize graphs $G$ satisfying $\edim_f(G)=1$ and examine some graph classes satisfying $\edim_f(G)=\frac{|V(G)|}{2}$. We also determine the fractional edge dimension for some classes of graphs.
\end{abstract}

\noindent\small {\bf{Keywords:}} metric dimension, edge dimension, fractional metric dimension, fractional edge dimension\\
\small {\bf{2010 Mathematics Subject Classification:}} 05C12


\section{Introduction}

Let $G$ be a finite, simple, undirected, and connected graph with vertex set $V(G)$ and edge set $E(G)$. For any two vertices $x, y\in V(G)$, let $d(x,y)$ denote the minimum number of edges connecting the vertices $x$ and $y$ in $G$. For $v\in V(G)$, the \emph{open neighborhood} of $v$ is $N(v)=\{u\in V(G): uv\in E(G)\}$. The \emph{degree} of a vertex $v\in V(G)$ is $|N(v)|$; a \emph{leaf} is a vertex of degree one, and a \emph{major vertex} is a vertex of degree at least three. The \emph{complement} of $G$, denoted by $\overline{G}$, is the graph whose vertex set is $V(G)$ and $xy\in E(\overline{G})$ if and only if $xy\not\in E(G)$ for any distinct $x,y\in V(G)$. We denote by $P_n$, $C_n$, $K_n$ and $K_{t, n-t}$, respectively, the path, the cycle, the complete graph and the complete bi-partite graph on $n$ vertices.

A vertex $z \in V(G)$ \emph{resolves} a pair of vertices $x$ and $y$ in $G$ if $d(x,z) \neq d(y,z)$. For two distinct vertices $x,y \in V(G)$, let $R_v\{x,y\}=\{z \in V(G): d(x,z) \neq d(y,z)\}$. A vertex subset $S \subseteq V(G)$ is a \emph{(vertex) resolving set} of $G$ if $|S\cap R_v\{x,y\}| \ge 1$ for every pair of distinct vertices $x,y \in V(G)$, and the \emph{metric dimension} $\dim(G)$ of $G$ is the minimum cardinality among all resolving sets of $G$. For an ordered set of distinct vertices $U=\{u_1, \ldots, u_k\} \subseteq V(G)$, the distance vector of a vertex $x\in V(G)$ with respect to $U$ is $\code_U(x)=(d(x, u_1), \ldots, d(x, u_k))$. Metric dimension, introduced by  Slater~\cite{slater} and by Harary and Melter~\cite{harary}, has applications in robot navigation~\cite{tree2}, sonar~\cite{slater} and combinational optimization~\cite{sebo}, to name a few. It is noted in~\cite{NP} that determining the metric dimension of a general graph is an NP-hard problem.

For any vertex $v\in V(G)$ and any edge $e=xy\in E(G)$, let $d(e,v)=\min\{d(x,v), d(y,v)\}$. For any distinct edges $e_1, e_2 \in E(G)$, let $R_e\{e_1, e_2\}=\{v\in V(G): d(v,e_1) \neq d(v, e_2)\}$. A vertex subset $S \subseteq V(G)$ is an \emph{edge resolving set} of $G$ if $|S \cap R_e\{e_1, e_2\}| \ge 1$ for every pair of distinct edges $e_1, e_2\in E(G)$, and the \emph{edge (metric) dimension} $\edim(G)$ of $G$ is the minimum cardinality among all edge resolving sets of $G$. Kelenc et al.~\cite{edim} introduced and initiated the study of edge dimension, and it is stated in~\cite{edim} that determining the edge dimension of a general graph is an NP-complete problem. It is easy to see that, for any connected graph $G$ of order $n\ge3$, $1\le \edim(G)\le n-1$ (see~\cite{edim}); for characterization of graphs $G$ with $\edim(G)=n-1$ and $\edim(G)=n-2$, respectively, see~\cite{nina} and~\cite{geneson}. For an ordered set of distinct vertices $U=\{u_1, \ldots, u_k\} \subseteq V(G)$, the distance vector of an edge $e\in E(G)$ with respect to $U$ is $\code_U(e)=(d(e, u_1), \ldots, d(e, u_k))$.

The fractionalization of various graph parameters has been extensively studied (see~\cite{fractionalization}). Currie and Oellermann~\cite{oellermann} defined fractional metric dimension as the optimal solution to a linear programming problem by relaxing a condition of the integer programming problem for metric dimension. Arumugam and Mathew~\cite{fdim} officially studied the fractional metric dimension of graphs. For a function $g$ defined on $V(G)$ and for $U\subseteq V(G)$, let $g(U)=\sum_{s\in U} g(s)$. A real-valued function $g:V(G)\rightarrow[0,1]$ is a \emph{(vertex) resolving function} of $G$ if $g(R_v\{x,y\}) \ge 1$ for every pair of distinct vertices $x,y\in V(G)$, and the \emph{fractional metric dimension} $\dim_f(G)$ of $G$ is $\min\{g(V(G)): g \mbox{ is a resolving function of } G\}$. Notice that $\dim_f(G)$ reduces to $\dim(G)$ if the codomain of resolving functions is restricted to $\{0,1\}$.

Analogous to resolving function and fractional metric dimension, we introduce edge resolving function and fractional edge (metric) dimension as follows. A real-valued function $g: V(G) \rightarrow [0,1]$ is an \emph{edge resolving function} of $G$ if $g(R_e\{e_1, e_2\})\ge1$ for every pair of distinct edges $e_1, e_2\in E(G)$, and the \emph{fractional edge dimension} $\edim_f(G)$ of $G$ is $\min\{g(V(G)): g \mbox{ is an edge resolving function of }G\}$. Notice that $\edim_f(G)$ reduces to $\edim(G)$ if the codomain of edge resolving functions is restricted to $\{0,1\}$.

In this paper, we introduce and study the fractional edge dimension of graphs. We obtain some general results on the edge dimension and fractional edge dimension of graphs. We also determine the fractional edge dimension of some graph classes.

The paper is organized as follows. In Section~\ref{sec_general}, we observe that $1\le\edim_f(G)\le \frac{|V(G)|}{2}$ for any connected graph $G$ of order at least three. We characterize connected graphs $G$ satisfying $\edim_f(G)=1$, and we show that $\dim_f(G)=\frac{|V(G)|}{2}$ implies $\edim_f(G)=\frac{|V(G)|}{2}$, but not vice versa; we show that there exists a family of graphs $H$ with $\edim_f(H)=\frac{|V(H)|}{2}>\dim_f(H)$ such that $\frac{\edim_f(H)}{\dim_f(H)}$ can be arbitrarily large. We show that, for an edge resolving set $S$ of $G$, $\{\code_S(e): e\in E(G)\}$ does not uniquely determine $G$, i.e., there exist two non-isomorphic graphs on the same vertex set with the same edge metric coordinates with respect to the same edge resolving set. We show that there exist graphs $G$ and $H$ with $H \subset G$ such that both $\edim(H)-\edim(G)$ and $\edim_f(H)-\edim_f(G)$ can be arbitrarily large. We also consider the relation between planarity of a graph $G$ and $\edim(G)=2$: we show that $\edim(G)=2$ implies $G$ contains neither $K_5$ nor $K_{3,3}$ as a subgraph, while there exists a non-planar graph $H$ with $\edim(H)=2$. In Section~\ref{sec_graphs}, we determine $\edim_f(G)$ when $G$ is a tree, a cycle, the Petersen graph, a wheel graph, a complete multi-partite graph and a grid graph, respectively. 


\section{General results on edge dimension and fractional edge dimension}\label{sec_general}

In this section, we obtain some general results on edge dimension and fractional edge dimension. We begin with some terminology and useful observations. Two vertices $u,w\in V(G)$ are called \emph{twins} if $N(u)-\{w\}=N(w)-\{u\}$; notice that a vertex is its own twin. Hernando et al.~\cite{Hernando} observed that the twin relation is an equivalence relation and that an equivalence class under it, called a \emph{twin equivalence class}, induces either a clique or an independent set. We note that, for distinct twins $x$ and $y$ of $G$, if $z\in N(x) \cap N(y)$, then $R_e\{zx, zy\}=\{x,y\}$.

\begin{observation}\label{obs_twin}
Let $x$ and $y$ be distinct members of the same twin equivalence class of a graph $G$. 
\begin{itemize}
\item[(a)] \emph{\cite{Hernando}} For any resolving set $R$ of $G$, $R \cap\{x,y\} \neq\emptyset$.
\item[(b)] For any edge resolving set $S$ of $G$, $S \cap\{x,y\} \neq\emptyset$.
\item[(c)] \emph{\cite{yi}} For any resolving function $g$ of $G$, $g(x)+g(y)\ge1$. 
\item[(d)] For any edge resolving function $h$ of $G$, $h(x)+h(y)\ge1$.
\end{itemize}
\end{observation}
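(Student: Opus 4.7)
Parts (a) and (c) are attributed to prior work, so my attention is on (b) and (d). For both, my plan is to apply in one line the fact recalled immediately before the observation: for distinct twins $x,y$ and any $z\in N(x)\cap N(y)$, one has $R_e\{zx,zy\}=\{x,y\}$. Once such a common neighbor $z$ is secured, this identity forces any edge resolving set $S$ to intersect $\{x,y\}$ (from $|S\cap R_e\{zx,zy\}|\ge 1$), giving (b); and it forces any edge resolving function $h$ to satisfy $h(x)+h(y)=h(R_e\{zx,zy\})\ge 1$, giving (d).

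Thus the only substantive task is to produce a common neighbor $z\in N(x)\cap N(y)$. Using that the twin equivalence class of $x,y$ induces either a clique or an independent set, I would split into two cases. If the class is a clique, then $xy\in E(G)$ and $N(x)\setminus\{y\}=N(y)\setminus\{x\}$, so every vertex of $N(x)\setminus\{y\}$ is a common neighbor; this set is empty only when $N(x)=\{y\}$, forcing $G=K_2$, in which case there are no two distinct edges and (b), (d) are vacuously satisfied. If the class is independent, then $N(x)=N(y)$, and connectedness of $G$ together with $x\ne y$ and $xy\notin E(G)$ forces this common neighborhood to be nonempty.

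There is really no obstacle here: the structural content sits in the preassumed identity $R_e\{zx,zy\}=\{x,y\}$, and the remaining work is just the short existence check for $z$ sketched above. If I had to reprove that identity from scratch, the one delicate point would be verifying $d(v,x)=d(v,y)$ for every $v\notin\{x,y\}$, which is the standard ``twins have equal distances to all other vertices'' fact; it follows by an exchange argument on the penultimate vertex of a shortest $v$--$x$ path, using $N(x)\setminus\{y\}=N(y)\setminus\{x\}$ to reroute to $y$ with the same length.
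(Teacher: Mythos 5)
Your argument is correct and matches the paper's intended justification: the observation is stated without a written proof, resting precisely on the remark immediately preceding it that $R_e\{zx,zy\}=\{x,y\}$ for any $z\in N(x)\cap N(y)$, which is exactly your one-line reduction for (b) and (d). Your additional care in securing the common neighbor $z$ (splitting on whether the twin class is a clique or independent set) is more than the paper records and is sound; the only nitpick is that for the degenerate case $G=K_2$ the conclusions of (b) and (d) are not so much vacuously true as outside the paper's standing assumption of connected graphs of order at least three, since there the empty set and the zero function are (vacuously) edge resolving.
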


\begin{observation}\label{obs_frac}
Let $G$ be a connected graph of order at least three. Then
\begin{itemize}
\item[(a)] \emph{\cite{fdim}} $\dim_f(G) \le \dim(G)$;
\item[(b)] $\edim_f(G) \le \edim(G)$.
\end{itemize}
\end{observation}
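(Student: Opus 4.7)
The plan is to verify part (b) (since part (a) is attributed to \cite{fdim}) by a direct ``indicator function'' construction, exploiting the last sentence of the fractional definition: if the codomain is restricted to $\{0,1\}$, then $\edim_f(G)$ collapses to $\edim(G)$. So any edge resolving set yields a $\{0,1\}$-valued edge resolving function of the same weight, which already sits inside the feasible region for $\edim_f(G)$.

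Concretely, I would start by choosing a minimum edge resolving set $S \subseteq V(G)$, so $|S| = \edim(G)$. Define $g : V(G) \to [0,1]$ by $g(v) = 1$ if $v \in S$ and $g(v) = 0$ otherwise. Then for any pair of distinct edges $e_1, e_2 \in E(G)$,
\[
g(R_e\{e_1,e_2\}) \;=\; \sum_{s \in R_e\{e_1,e_2\}} g(s) \;=\; |S \cap R_e\{e_1,e_2\}| \;\ge\; 1,
\]
where the final inequality is exactly the defining property of $S$ being an edge resolving set. Hence $g$ is an edge resolving function of $G$.

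Since $g$ is feasible, the minimum defining $\edim_f(G)$ is at most
\[
g(V(G)) \;=\; \sum_{v \in V(G)} g(v) \;=\; |S| \;=\; \edim(G),
\]
which gives $\edim_f(G) \le \edim(G)$. There is no real obstacle here: the statement is essentially the standard LP-relaxation inequality that any integer feasible solution is a fractional feasible solution of the same objective value. The only thing to be mindful of is citing Observation~\ref{obs_twin} is unnecessary, and that part (a) requires no separate argument beyond the citation to \cite{fdim}, though the same indicator-function template applied to a minimum vertex resolving set would prove it by identical reasoning.
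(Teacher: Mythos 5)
Your proof is correct and is exactly the intended justification: the paper states this as an observation without proof, relying on the remark in the introduction that $\edim_f(G)$ reduces to $\edim(G)$ when the codomain is restricted to $\{0,1\}$, which is precisely your indicator-function argument. No gaps.
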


First, we show that there exist two non-isomorphic graphs on the same vertex set with the same edge metric coordinates with respect to the same edge resolving set. Seb\"{o} and Tannier~\cite{sebo} observed that, for a minimum resolving set $S$ of a graph $G$, the vectors $\{\code_S(v): v\in V(G)\}$ may not uniquely determine $G$ (see Figure~\ref{fig_unique}(a)). Similarly, we show that there exist two non-isomorphic graphs $H_1$ and $H_2$ with $V(H_1)=V(H_2)$ and $\{\code_S(e): e\in E(H_1)\}=\{\code_S(e'): e'\in E(H_2)\}$, where $S$ is a common minimum edge resolving set for $H_1$ and $H_2$; see Figure~\ref{fig_unique}(b).

\begin{figure}[ht]
\centering
\begin{tikzpicture}[scale=.7, transform shape]


\node [draw, fill=black, shape=circle, scale=.8] (a1) at  (0,0) {};
\node [draw, shape=circle, scale=.8] (a2) at  (0,1.5) {};
\node [draw, shape=circle, scale=.8] (a3) at  (0,3) {};
\node [draw, fill=black, shape=circle, scale=.8] (a4) at  (1.5,0) {};
\node [draw, shape=circle, scale=.8] (a5) at  (1.5,1.5) {};
\node [draw, shape=circle, scale=.8] (a6) at  (1.5,3) {};

\node [draw, fill=black, shape=circle, scale=.8] (a11) at  (4,0) {};
\node [draw, shape=circle, scale=.8] (a22) at  (4,1.5) {};
\node [draw, shape=circle, scale=.8] (a33) at  (4,3) {};
\node [draw, fill=black, shape=circle, scale=.8] (a44) at  (5.5,0) {};
\node [draw, shape=circle, scale=.8] (a55) at  (5.5,1.5) {};
\node [draw, shape=circle, scale=.8] (a66) at  (5.5,3) {};

\draw(a1)--(a2)--(a3);\draw(a4)--(a5)--(a6);\draw(a2)--(a5);\draw(a11)--(a22)--(a33)--(a66)--(a55)--(a44);\draw(a22)--(a55);

\node [scale=1] at (-0.6,3) {$(2,3)$};
\node [scale=1] at (-0.6,1.5) {$(1,2)$};
\node [scale=1] at (-0.6,0) {$(0,3)$};
\node [scale=1] at (2.1,3) {$(3,2)$};
\node [scale=1] at (2.1,1.5) {$(2,1)$};
\node [scale=1] at (2.1,0) {$(3,0)$};

\node [scale=1] at (3.4,3) {$(2,3)$};
\node [scale=1] at (3.4,1.5) {$(1,2)$};
\node [scale=1] at (3.4,0) {$(0,3)$};
\node [scale=1] at (6.1,3) {$(3,2)$};
\node [scale=1] at (6.1,1.5) {$(2,1)$};
\node [scale=1] at (6.1,0) {$(3,0)$};

\node [scale=1.3] at (0.75,-0.75) {$G_1$};
\node [scale=1.3] at (4.75,-0.75) {$G_2$};


\node [draw, fill=black, shape=circle, scale=.8] (b1) at  (10,3) {};
\node [draw, shape=circle, scale=.8] (b2) at  (9,2.2) {};
\node [draw, fill=black, shape=circle, scale=.8] (b3) at  (9,0.9) {};
\node [draw, shape=circle, scale=.8] (b4) at  (10,0) {};
\node [draw, shape=circle, scale=.8] (b5) at  (11,0.9) {};
\node [draw, shape=circle, scale=.8] (b6) at  (11,2.2) {};

\node [draw, fill=black, shape=circle, scale=.8] (b11) at  (14.5,3) {};
\node [draw, shape=circle, scale=.8] (b22) at  (13.5,2.2) {};
\node [draw, fill=black, shape=circle, scale=.8] (b33) at  (13.5,0.9) {};
\node [draw, shape=circle, scale=.8] (b44) at  (14.5,0) {};
\node [draw, shape=circle, scale=.8] (b55) at  (15.5,0.9) {};
\node [draw, shape=circle, scale=.8] (b66) at  (15.5,2.2) {};

\draw(b1)--(b2)--(b3)--(b4)--(b5)--(b6)--(b1);\draw(b2)--(b6);\draw(b11)--(b22)--(b33)--(b44)--(b55)--(b66)--(b11);\draw(b22)--(b55);

\node [scale=1] at (9.1,2.8) {$(0,1)$};
\node [scale=1] at (10.9,2.8) {$(0,2)$};
\node [scale=1] at (10,1.9) {$(1,1)$};
\node [scale=1] at (8.5,1.55) {$(1,0)$};
\node [scale=1] at (11.5,1.55) {$(1,2)$};
\node [scale=1] at (9,0.3) {$(2,0)$};
\node [scale=1] at (11,0.3) {$(2,1)$};

\node [scale=1] at (13.6,2.8) {$(0,1)$};
\node [scale=1] at (15.4,2.8) {$(0,2)$};
\node [scale=1] at (14.55,1.95) {$(1,1)$};
\node [scale=1] at (13,1.55) {$(1,0)$};
\node [scale=1] at (16,1.55) {$(1,2)$};
\node [scale=1] at (13.5,0.3) {$(2,0)$};
\node [scale=1] at (15.5,0.3) {$(2,1)$};

\node [scale=1.3] at (10,-0.75) {$H_1$};
\node [scale=1.3] at (14.5,-0.75) {$H_2$};

\node [scale=1.3] at (2.75,-1.5) {\textbf{(a)}};
\node [scale=1.3] at (12.25,-1.5) {\textbf{(b)}};

\end{tikzpicture}
\caption{\small{(a)~\cite{sebo} Two non-isomorphic graphs $G_1$ and $G_2$ with $V(G_1)=V(G_2)$ and $\{\code_S(v):v\in V(G_1)\}=\{\code_S(w):w\in V(G_2)\}$ on the common minimum resolving set $S$, comprised of the solid vertices, for $G_1$ and $G_2$; (b) Two non-isomorphic graphs $H_1$ and $H_2$ with $V(H_1)=V(H_2)$ and $\{\code_S(e): e\in E(H_1)\}=\{\code_S(e'):e'\in E(H_2)\}$ on the common minimum edge resolving set $S$, comprised of the solid vertices, for $H_1$ and $H_2$.}}\label{fig_unique}
\end{figure}
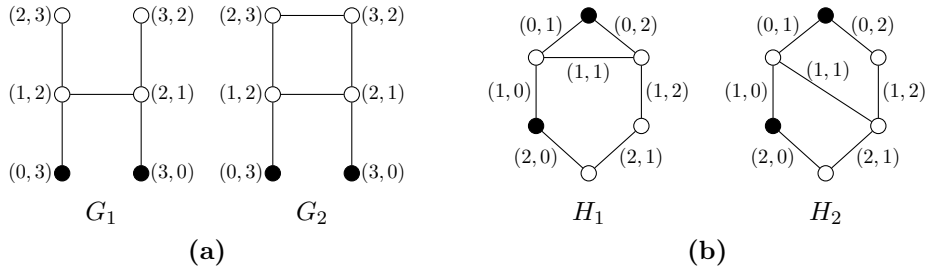

Second, we examine the relation between $\edim(G)=2$ and the planarity of $G$. We recall some terminology. A graph is \emph{planar} if it can be drawn in a plane without edge crossing. For two graphs $G$ and $H$, $H$ is called a \emph{minor} of $G$ if $H$ can be obtained from $G$ by vertex deletion, edge deletion, or edge contraction. We recall the following results. 

\begin{theorem}\emph{\cite{wagner}}\label{graph_minor}
A graph $G$ is planar if and only if neither $K_5$ nor $K_{3,3}$ is a minor of $G$.
\end{theorem}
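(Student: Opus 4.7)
The plan is to derive this characterization as a consequence of Kuratowski's theorem, which asserts that $G$ is planar if and only if $G$ contains no subdivision of $K_5$ or $K_{3,3}$. The argument naturally splits into two directions.

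For the forward implication, I would show that planarity is preserved by all three minor operations and then use the non-planarity of $K_5$ and $K_{3,3}$ themselves. Non-planarity of these two graphs follows from Euler's formula: a simple planar graph on $n \geq 3$ vertices has at most $3n-6$ edges (ruling out $K_5$, which has $10>9$ edges), and a simple planar bipartite graph has at most $2n-4$ edges (ruling out $K_{3,3}$, which has $9>8$ edges). Closure of the planar class under minor operations is straightforward for vertex deletion and edge deletion, which simply restrict a planar embedding. For edge contraction of $uv$, the incident edges of $u$ and $v$ in any fixed planar embedding can be merged into a single cyclic order around the contracted vertex without forcing a crossing (after discarding any resulting multi-edge or loop). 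Hence, if $K_5$ or $K_{3,3}$ were a minor of a planar $G$, it would itself admit a planar embedding, a contradiction.

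For the reverse implication, I would invoke Kuratowski's theorem: if $G$ is not planar, then $G$ contains a subdivision of $K_5$ or $K_{3,3}$. Any subdivision of $H$ is a minor of $G$, since the branch vertices play the role of $V(H)$ and the internal paths of the subdivision can be contracted to single edges, recovering $H$ as a minor. Thus $G$ has $K_5$ or $K_{3,3}$ as a minor.

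The main obstacle is the reverse direction, because it rests on Kuratowski's theorem, whose own proof is substantial. The usual route there is by induction on $|E(G)|$, reducing via splits at vertex cuts of size at most two to the $3$-connected case, which is then handled by direct structural analysis (for instance, via Tutte's convex embedding theorem for $3$-connected planar graphs). Alternatively, one can prove Wagner's theorem directly, bypassing subdivisions entirely, by the same kind of $3$-connected reduction followed by an explicit exhibition of $K_5$ or $K_{3,3}$ as a minor inside any non-planar $3$-connected graph. Either way, the central difficulty is the structure of $3$-connected non-planar graphs, and once that is in hand the rest of the proof is essentially bookkeeping.
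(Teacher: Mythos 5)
The paper does not prove this statement: it is Wagner's classical theorem, recalled verbatim from the literature (the citation is to Wagner's 1937 paper) and used only as a tool to show that the graph in Figure~\ref{fig_edim2_nonplanar} is non-planar. So there is no in-paper argument to compare against. Your outline is the standard textbook derivation of Wagner's theorem from Kuratowski's theorem, and it is logically sound: the forward direction correctly combines minor-closure of planarity with the Euler-formula edge counts ($10 > 3\cdot 5 - 6$ and $9 > 2\cdot 6 - 4$), and the reverse direction correctly observes that a subdivision of $H$ in $G$ yields $H$ as a minor by deleting the rest of $G$ and contracting the internal paths, so Kuratowski's conclusion transfers. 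The only caveat is that your proposal is a reduction to Kuratowski's theorem rather than a self-contained proof; you acknowledge this and correctly locate the remaining difficulty in the $3$-connected case. For the purposes of this paper, where the result is a cited black box, that level of detail is more than adequate.
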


\begin{theorem}\emph{\cite{tree2}}\label{dim2_planarity}
\begin{itemize}
\item[(a)] A graph $G$ with $\dim(G)=2$ cannot have $K_5$ or $K_{3,3}$ as a subgraph.
\item[(b)] There exists a non-planar graph $G$ with $\dim(G)=2$.
\end{itemize}
\end{theorem}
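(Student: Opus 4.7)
The plan is to prove (a) by extracting enough combinatorial constraints on the codes $(d(x,u), d(x,v))$ of vertices in a putative $K_5$ or $K_{3,3}$ subgraph to force a code collision, and to prove (b) by exhibiting an explicit example.

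For part (a), fix a resolving pair $\{u,v\}$ of $G$, so that every vertex $x$ carries a distinct code $\code_{\{u,v\}}(x) = (d(x,u), d(x,v))$. Whenever $xy \in E(G)$, the triangle inequality yields $|d(x,w) - d(y,w)| \le 1$ for every vertex $w$. In the $K_5$ case, if $x_1, \dots, x_5$ induce $K_5$, then each coordinate of their codes takes at most two distinct values, so the five codes lie inside a $2 \times 2$ grid; two must coincide, contradicting that $\{u,v\}$ is a resolving set.

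For the $K_{3,3}$ case, let $A = \{a_1, a_2, a_3\}$ and $B = \{b_1, b_2, b_3\}$ be the parts, with codes $(x_i, y_i) = \code_{\{u,v\}}(a_i)$ and $(x'_j, y'_j) = \code_{\{u,v\}}(b_j)$. Since $a_i$ is adjacent to $b_j$ in $G$ for every $i, j$, we have $|x_i - x'_j| \le 1$ and $|y_i - y'_j| \le 1$. I would case-analyze on $|\{x_1, x_2, x_3\}|$. In the extremal case with three distinct values, these must be consecutive integers $k, k+1, k+2$ (otherwise $\bigcap_i [x_i - 1, x_i + 1]$ is empty), which forces $x'_1 = x'_2 = x'_3 = k+1$; distinguishing $b_1, b_2, b_3$ then requires three distinct $y'_j$, and the symmetric argument in the $y$-coordinate forces all $y_i$ to equal the middle of $\{y'_j\}$, producing a collision between some $a_i$ and some $b_j$ at $(k+1, \textrm{middle } y')$. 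The cases $|\{x_1, x_2, x_3\}| \in \{1, 2\}$ are handled by the same circle of ideas: either the distance-$\le 1$ constraint together with the need to distinguish all six codes squeezes them into a grid with insufficient room, or the analogous analysis of the $y$-coordinate forces a concrete collision. The main obstacle is the two-distinct-values subcase, where a careful analysis of the relative arrangement of $\{x_i\}$ and $\{x'_j\}$ is required.

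For part (b), I would exhibit a specific non-planar graph $H$ of small order with $\dim(H) = 2$. By part (a) and Kuratowski's theorem, $H$ must contain a proper subdivision of $K_5$ or $K_{3,3}$ as a subgraph. A natural candidate is a subdivision of $K_{3,3}$ chosen with asymmetric path lengths so that two carefully selected branch vertices (or pendant vertices added for flexibility) resolve $V(H)$. The main difficulty is designing the subdivision and verifying by direct distance computation that the chosen pair yields pairwise distinct codes; once such an $H$ is identified, $\dim(H) \le 2$ follows by exhibiting the resolving pair, while $\dim(H) \ge 2$ holds since $H$ is not a path.
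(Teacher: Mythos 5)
First, a point of comparison: the paper does not prove this theorem itself --- it is quoted from \cite{tree2} and serves only as a template for the edge-dimension analogues, whose proofs (pigeonhole on codes confined to a small grid for the impossibility part; an explicit graph with every code listed for the existence part) are the relevant benchmark. Measured against that, your part (a) is essentially the right argument. The $K_5$ case is complete: mutual adjacency confines each coordinate to two consecutive values, so five distinct codes cannot fit in four cells. For $K_{3,3}$ your three-distinct-values case is correct, but you explicitly leave the two-distinct-values subcase as ``the same circle of ideas,'' and a complete proof must close it. It does close, in a few lines: if $\{x_1,x_2,x_3\}=\{k,k+2\}$ then every $x'_j=k+1$ and you are back in a configuration already handled; if $\{x_1,x_2,x_3\}=\{k,k+1\}$ then every $x'_j\in[k-1,k+1]\cap[k,k+2]=\{k,k+1\}$, so all six first coordinates lie in $\{k,k+1\}$. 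Repeating the analysis on second coordinates, either one side exhibits three distinct values, forcing the other side's second coordinates to a single middle value and hence an internal collision there (three vertices, two admissible first coordinates, one second coordinate), or the cross constraints squeeze all six second coordinates into a common pair $\{m,m+1\}$, leaving six distinct codes to occupy a $2\times2$ grid. You should write this out rather than gesture at it, but it is a completable gap, not a wrong turn.

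Part (b) is the genuine gap: it is an existence statement, so the explicit example \emph{is} the proof, and you do not supply one. Your design constraints are correct --- by part (a) and Kuratowski the witness must contain a proper subdivision of $K_5$ or $K_{3,3}$, and a subdivided $K_{3,3}$ with asymmetric branch lengths is indeed what works (the graph of Figure~\ref{fig_edim2_nonplanar}, which the paper uses for the edge-dimension analogue, is exactly such a subdivision, and the paper certifies it by listing all eighteen codes). But ``design the subdivision and verify by direct distance computation'' defers the entire content of the claim: without a concrete graph, a concrete two-element vertex set, and the check that all vertex codes are pairwise distinct, nothing has been established. The lower bound $\dim(H)\ge 2$ via ``$H$ is not a path'' is fine once the example exists.
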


Analogous to Theorem~\ref{dim2_planarity}, we show that $\edim(G)=2$ implies $G$ contains neither $K_5$ nor $K_{3,3}$ as a subgraph, while there exists a non-planar graph $H$ with $\edim(H)=2$.

\begin{theorem}
If $G$ is a graph with $\edim(G)=2$, then $G$ contains neither $K_5$ nor $K_{3,3}$ as a subgraph.
\end{theorem}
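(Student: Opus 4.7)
The plan is a counting argument. Assume for contradiction that $\edim(G)=2$ with edge resolving set $S=\{u_1,u_2\}$ and that $G$ contains $K_5$ or $K_{3,3}$ as a subgraph. I will show that the edges of the forbidden subgraph admit at most $2\times 2 = 4$ distinct edge codes relative to $S$, which is strictly less than the number of edges present, contradicting the requirement that distinct edges of $G$ receive distinct codes. The only tool needed throughout is the one-step triangle inequality: if $xy\in E(G)$ then $|d(x,u_\ell)-d(y,u_\ell)|\le 1$ for each $\ell\in\{1,2\}$.

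For the $K_5$ case with vertex set $\{v_1,\dots,v_5\}$, all pairs of these vertices are adjacent, so the five distances $d(v_i,u_\ell)$ are pairwise within $1$, hence all lie in some set $\{k_\ell,k_\ell+1\}$. Because the edge code of $v_iv_j$ with respect to $u_\ell$ is the minimum of two such distances, it too lies in $\{k_\ell,k_\ell+1\}$. The two coordinates therefore yield at most four distinct codes, while $K_5$ has ten edges.

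For $K_{3,3}$ with parts $A=\{a_1,a_2,a_3\}$ and $B=\{b_1,b_2,b_3\}$, fix $\ell$ and write $\alpha_i=d(a_i,u_\ell)$, $\beta_j=d(b_j,u_\ell)$. The triangle inequality along each of the nine edges $a_ib_j$ gives $|\alpha_i-\beta_j|\le 1$. Swapping the two parts if necessary, I may assume the overall minimum $m$ of $\{\alpha_i\}\cup\{\beta_j\}$ is attained on the $A$-side, say $m=\alpha_1$. Then $\beta_j\le \alpha_1+1=m+1$ for every $j$, so the edge code $\min(\alpha_i,\beta_j)$ satisfies $m\le \min(\alpha_i,\beta_j)\le \beta_j\le m+1$, i.e.\ it lies in $\{m,m+1\}$. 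Again the two coordinates produce at most four codes, contradicting the existence of nine distinct edge codes on $K_{3,3}$.

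The subtle point I anticipate is in the $K_{3,3}$ step: the set of vertex distances $\{\alpha_i\}\cup\{\beta_j\}$ itself can legitimately span three consecutive integers (for instance $\alpha=(m,m+2,m+2)$ with $\beta=(m+1,m+1,m+1)$ is compatible with all triangle inequalities), so the bound cannot be read off the raw vertex distances. It is the $\min$ in the definition of an edge code that collapses the range back down to two consecutive values, and this collapse is what makes the argument work.
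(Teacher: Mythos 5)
Your proof is correct and follows essentially the same route as the paper's: both arguments bound the possible edge codes of the forbidden subgraph by a $2\times 2$ grid of consecutive values in each coordinate and then apply the pigeonhole principle against the $10$ (resp.\ $9$) edges. Your careful justification that the $\min$ in the edge-code definition collapses the $K_{3,3}$ codes to $\{m,m+1\}$ in each coordinate is a detail the paper asserts without proof, so your write-up is, if anything, slightly more complete.
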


\begin{proof}
Let $S=\{w_1, w_2\} \subset V(G)$ be an edge resolving set of $G$ with $|S|=2$.

First, suppose $G$ contains a clique with 5 vertices $v_1, v_2, v_3, v_4, v_5$. By relabeling the vertices of $G$ if necessary, let $d(w_1, v_1)=\min\{d(w_1, v_i): 1 \le i \le 5\}$. Then $\code_{\{w_1\}}(v_1v_2)=\code_{\{w_1\}}(v_1v_3)=\code_{\{w_1\}}(v_1v_4)=\code_{\{w_1\}}(v_1v_5)$. Since $v_1,v_2,v_3,v_4,v_5$ are all adjacent, there exist two edges, say $v_1v_i$ and $v_1v_j$ for distinct $i,j\in\{2,3,4,5\}$, with $\code_S(v_1v_i)=\code_S(v_1v_j)$, contradicting the assumption that $S$ is an edge resolving set of $G$. So, if $\edim(G)=2$, then $G$ does not contain $K_5$ as a subgraph.

Second, suppose $G$ contains $K_{3,3}$ as a subgraph; let $V(K_{3,3})=V_1\cup V_2$ such that each vertex in $V_1=\{u_1, u_2, u_3\}$ is adjacent to each vertex in $V_2=\{v_1, v_2, v_3\}$. Note that, for each $i,j\in\{1,2,3\}$, $\code_{\{w_1\}}(u_iv_j) \in \{\alpha, \alpha+1\}$ and $\code_{\{w_2\}}(u_iv_j) \in \{\beta, \beta+1\}$ for some non-negative integers $\alpha$ and $\beta$; thus, there are only four possible values for $\code_S(e)$ for $e\in E(K_{3,3})$. Since $|E(K_{3,3})|=9$, there exist two distinct edges, say $e_1$ and $e_2$, with $\code_S(e_1)=\code_S(e_2)$, contradicting the assumption that $S$ is an edge resolving set of $G$. So, if $\edim(G)=2$, then $G$ does not contain $K_{3,3}$ as a subgraph.~\hfill
\end{proof}

\begin{theorem}
There exists a non-planar graph $G$ with $\edim(G)=2$.
\end{theorem}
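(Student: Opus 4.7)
By the theorem immediately preceding, any candidate graph $G$ must contain neither $K_5$ nor $K_{3,3}$ as a subgraph. To still be non-planar, Theorem~\ref{graph_minor} forces $G$ to contain one of $K_5$ or $K_{3,3}$ as a \emph{minor} but not as a subgraph. The cleanest way to engineer this is to take $K_{3,3}$ (or $K_5$) and subdivide one or more edges: subdividing preserves the minor relation (contracting each subdivision path recovers the original graph) but destroys the subgraph relation, since the new degree-two vertices, and the now non-adjacent endpoints of subdivided edges, cannot sit inside any $K_5$ or $K_{3,3}$ subgraph. Thus I would take $G$ to be an explicit subdivision of $K_{3,3}$ (or of $K_5$) and verify separately that a chosen two-vertex set is an edge resolving set.

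Concretely, I would start from $K_{3,3}$ with bipartition $\{u_1,u_2,u_3\}\cup\{v_1,v_2,v_3\}$, subdivide enough edges to give the landmarks geometric room, and then select $S=\{w_1,w_2\}\subseteq V(G)$ to lie near opposite ends of a longest geodesic, so that the pairs $\bigl(d(e,w_1),d(e,w_2)\bigr)$ spread across a large enough rectangle in $\mathbb{Z}_{\ge 0}^2$ to accommodate all $|E(G)|$ edges. The verification that $S$ is an edge resolving set then reduces to writing down the distance tables $d(\cdot,w_1)$ and $d(\cdot,w_2)$, computing $\code_S(e)=\bigl(d(e,w_1),d(e,w_2)\bigr)$ for every $e\in E(G)$, and checking that all these codes are pairwise distinct. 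Non-planarity of $G$ is then immediate from Theorem~\ref{graph_minor} via the minor witnessed by contracting each subdivision path back to a single edge.

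The main obstacle is the edge-resolving verification, and more precisely, arranging the subdivisions and the landmarks so that no two edges collide in their $S$-codes. The characteristic collision is a pair of edges $uv_i$, $uv_j$ sharing a common endpoint $u$ whose other endpoints $v_i,v_j$ happen to be equidistant from both $w_1$ and $w_2$, so that the $\min$ defining $d(e,\cdot)$ washes out the difference; this is precisely the symmetry that doomed many natural two-vertex sets on $K_{3,3}$ itself. Handling it requires the subdivisions to break the symmetry between each such dangerous pair $(v_i,v_j)$ and the choice of $\{w_1,w_2\}$ to witness the asymmetry from at least one side. Once a subdivision pattern and a pair $\{w_1,w_2\}$ with this property is found, the rest is a finite, direct check; the proof is thus an exhibit-and-verify argument whose real content is the search for an appropriate small example.
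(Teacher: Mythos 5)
Your plan is exactly the paper's approach: the witness in the paper is a subdivision of $K_{3,3}$ in which the three edges $u_1v_1$, $u_2v_2$, $u_3v_1$ are replaced by internally disjoint paths of different lengths (with $2$, $4$ and $3$ internal vertices, respectively), the landmarks $S=\{x_1,y_4\}$ are taken on two different subdivision paths so as to break the symmetries you correctly identify as the danger, non-planarity is read off from the $K_{3,3}$ minor via Theorem~\ref{graph_minor}, and the resolving property is checked by listing all eighteen codes $\code_S(e)$ and observing they are distinct. You have diagnosed the right construction, the right obstruction (adjacent edges $uv_i,uv_j$ whose codes collide because the $\min$ in $d(e,\cdot)$ absorbs the difference), and the right remedy (asymmetric subdivision lengths plus landmarks placed to witness the asymmetry).

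The one genuine shortfall is that you never actually exhibit the graph or the pair $\{w_1,w_2\}$: for an existence theorem proved by construction, the explicit example together with its finite verification \emph{is} the proof, and ``once such a pattern is found, the rest is a finite check'' leaves the entire content outstanding. You should also not assert that subdividing forces the landmarks to work --- a subdivision of $K_{3,3}$ with all six branch vertices left symmetric (e.g., every edge subdivided once) can still defeat many two-vertex sets, so the choice of which edges to subdivide, by how much, and where to put $w_1,w_2$ must be made concrete and then verified edge by edge, as the paper does. To complete the argument you would also need the final sentence that no single vertex is an edge resolving set (so that $\edim(G)=2$ rather than merely $\edim(G)\le 2$), which your proposal omits.
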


\begin{proof}
Let $G$ be the graph given in Figure~\ref{fig_edim2_nonplanar}. Since $G$ contains $K_{3,3}$ as a minor, $G$ is not planar by Theorem~\ref{graph_minor}. Next, we show that $S=\{x_1, y_4\}$ forms an edge resolving set of $G$. Note the following: $\code_S(u_1x_1)=(0,2)$, $\code_S(x_1x_2)=(0,3)$, $\code_S(x_2v_1)=(1,4)$, $\code_S(u_1v_2)=(1,1)$, $\code_S(u_1v_3)=(1,2)$, $\code_S(u_2v_1)=(2,4)$, $\code_S(u_2y_1)=(3,3)$, $\code_S(y_1y_2)=(4,2)$, $\code_S(y_2y_3)=(4,1)$, $\code_S(y_3y_4)=(3,0)$, $\code_S(y_4v_2)=(2,0)$, $\code_S(u_2v_3)=(2,3)$, $\code_S(u_3z_1)=(3,2)$, $\code_S(z_1z_2)=(4,3)$, $\code_S(z_2z_3)=(3,4)$, $\code_S(z_3v_1)=(2,5)$, $\code_S(u_3v_2)=(2,1)$ and $\code_S(u_3v_3)=(2,2)$. So, $S$ is an edge resolving set of $G$ with $|S|=2$, and thus $\edim(G) \le 2$. Since a set consisting of one vertex fails to form an edge resolving set of $G$, $\edim(G)=2$.~\hfill
\end{proof}

\begin{figure}[ht]
\centering
\begin{tikzpicture}[scale=.7, transform shape]

\node [draw, shape=circle, scale=.8] (u1) at  (0,6) {};
\node [draw, shape=circle, scale=.8] (u2) at  (0,3) {};
\node [draw, shape=circle, scale=.8] (u3) at  (0,0) {};

\node [draw, shape=circle, scale=.8] (v1) at  (10,6) {};
\node [draw, shape=circle, scale=.8] (v2) at  (10,3) {};
\node [draw, shape=circle, scale=.8] (v3) at  (10,0) {};

\node [draw, fill=black, shape=circle, scale=.8] (u11) at  (3.3,6) {};
\node [draw, shape=circle, scale=.8] (u12) at  (6.6,6) {};

\node [draw, shape=circle, scale=.8] (u21) at  (2,3) {};
\node [draw, shape=circle, scale=.8] (u22) at  (4,3) {};
\node [draw, shape=circle, scale=.8] (u23) at  (6,3) {};
\node [draw, fill=black, shape=circle, scale=.8] (u24) at  (8,3) {};

\node [draw, shape=circle, scale=.8] (u31) at  (1.3,0.8) {};
\node [draw, shape=circle, scale=.8] (u32) at  (2.5,1.5) {};
\node [draw, shape=circle, scale=.8] (u33) at  (7.5,4.5) {};

\draw(u1)--(u11)--(u12)--(v1)--(u2)--(v3);\draw(v2)--(u1)--(v3)--(u3)--(v2);\draw(u2)--(u21)--(u22)--(u23)--(u24)--(v2);\draw(u3)--(u31)--(u32)--(u33)--(v1); 

\node [scale=1.2] at (-0.4,6) {$u_1$};
\node [scale=1.2] at (-0.4,3) {$u_2$};
\node [scale=1.2] at (-0.4,0) {$u_3$};
\node [scale=1.2] at (10.4,6) {$v_1$};
\node [scale=1.2] at (10.4,3) {$v_2$};
\node [scale=1.2] at (10.4,0) {$v_3$};

\node [scale=1.1] at (3.3,6.35) {$x_1$};
\node [scale=1.1] at (6.6,6.35) {$x_2$};

\node [scale=1.1] at (2,3.35) {$y_1$};
\node [scale=1.1] at (4,3.35) {$y_2$};
\node [scale=1.1] at (6,3.35) {$y_3$};
\node [scale=1.1] at (8,3.35) {$y_4$};

\node [scale=1.1] at (1.3, 1.15) {$z_1$};
\node [scale=1.1] at (2.5,1.85) {$z_2$};
\node [scale=1.1] at (7.6,4.85) {$z_3$};

\end{tikzpicture}
\caption{\small{A non-planar graph $G$ with $\edim(G)=2$, where $\{x_1, y_4\}$ is an edge resolving set of $G$}.}\label{fig_edim2_nonplanar}
\end{figure}
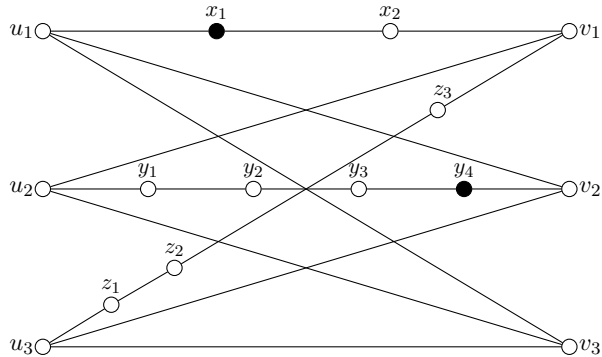

Third, we show that there exist graphs $G$ and $H$ with $H \subset G$ such that both $\edim(H)-\edim(G)$ and $\edim_f(H)-\edim_f(G)$ can be arbitrarily large; along the way, we show that $\dim_f(H)-\dim_f(G)$ can be arbitrarily large. We begin by recalling the characterization result of graphs $G$ satisfying $\dim_f(G)=\frac{|V(G)|}{2}$ that are used in proving our results. 

\begin{theorem}\emph{\cite{fdim, frac_kang}}\label{fracdim_bounds}
Let $G$ be a connected graph of order $n\ge2$. Then $1 \le \dim_f(G)\le \frac{n}{2}$, and $\dim_f(G)=\frac{n}{2}$ if and only if there exists a bijection $\phi:V(G) \rightarrow V(G)$ such that $\phi(v)\neq v$ and $|R\{v, \phi(v)\}|=2$ for all $v\in V(G)$.
\end{theorem}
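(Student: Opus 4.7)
The plan is to dispatch the inequalities $1\le\dim_f(G)\le n/2$ and the ``if'' direction directly, then to establish the ``only if'' direction via linear programming duality. Since $u,w\in R_v\{u,w\}$ for every pair of distinct $u,w$, any resolving function $g$ satisfies $g(V(G))\ge g(R_v\{u,w\})\ge 1$; in the other direction, the fact that $|R_v\{u,w\}|\ge 2$ for every pair makes the constant function $g\equiv\tfrac12$ a resolving function of total weight $n/2$, giving the upper bound.

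For the ``if'' direction, suppose a bijection $\phi$ with the stated properties exists. Because $\{v,\phi(v)\}\subseteq R_v\{v,\phi(v)\}$ and $|R_v\{v,\phi(v)\}|=2$, the constraint for the pair $\{v,\phi(v)\}$ reads $g(v)+g(\phi(v))\ge 1$. Summing over $v$, and using that $\phi$ is a bijection so that $\sum_v g(\phi(v))=\sum_v g(v)$, yields $2g(V(G))\ge n$. Hence $\dim_f(G)\ge n/2$, and equality follows from the upper bound above.

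For the ``only if'' direction, view $\dim_f(G)$ as the optimum of the LP $\min\bigl\{\sum_v g(v):\ g\ge 0,\ g(R_v\{u,w\})\ge 1\ \text{for all pairs}\bigr\}$. Its dual assigns weights $y_{uw}\ge 0$ to unordered pairs subject to $\sum_{\{u,w\}:\,v\in R_v\{u,w\}} y_{uw}\le 1$ for every $v$, and maximizes $\sum_{\{u,w\}} y_{uw}$. Assuming $\dim_f(G)=n/2$ and choosing a dual optimum $y^{*}$, summing the vertex constraints gives
\[
n\;\ge\;\sum_{v}\sum_{\{u,w\}:\,v\in R_v\{u,w\}}y^{*}_{uw}\;=\;\sum_{\{u,w\}}y^{*}_{uw}\,|R_v\{u,w\}|\;\ge\;2\sum_{\{u,w\}}y^{*}_{uw}\;=\;n,
\]
so every inequality is tight: $y^{*}_{uw}>0$ forces $R_v\{u,w\}=\{u,w\}$, and each vertex constraint holds with equality.

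Define the auxiliary graph $H$ on $V(G)$ by $uw\in E(H)\iff R_v\{u,w\}=\{u,w\}$; then $y^{*}$ is a fractional perfect matching of $H$. Invoking the structure theorem for the fractional perfect matching polytope---its extreme points are $\tfrac12$-integral with support forming a vertex-disjoint union of matching edges and odd cycles that cover $V(H)$---produces such a $\tfrac12$-integral witness, from which I construct $\phi$ by transposing the endpoints of each matching edge and cyclically rotating the vertices around each odd cycle. I expect the main obstacle to be this final step, which rests on the Edmonds-style structural description of the fractional (perfect) matching polytope; the remainder is essentially bookkeeping for the LP duality.
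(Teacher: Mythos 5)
The paper does not prove this theorem; it is recalled verbatim from \cite{fdim, frac_kang}, so there is no in-paper argument to compare against. Judged on its own, your proof is correct and complete. The two bounds and the ``if'' direction are exactly the standard elementary arguments (note $u,w\in R_v\{u,w\}$ always, so the constant-$\tfrac12$ function is feasible, and a fixed-point-free bijection with $|R_v\{v,\phi(v)\}|=2$ forces $g(v)+g(\phi(v))\ge 1$ for all $v$, whence $g(V(G))\ge n/2$ by summing). For the ``only if'' direction your LP-duality computation is sound: the box constraints $g\le 1$ are redundant for the minimization, strong duality applies to this finite feasible bounded LP, and the double-counting chain $n\ge\sum_{\{u,w\}}y^*_{uw}|R_v\{u,w\}|\ge 2\sum_{\{u,w\}}y^*_{uw}=n$ correctly forces both complementary-slackness conclusions (every positively weighted pair has $R_v\{u,w\}=\{u,w\}$, and every vertex constraint is tight), which is precisely the statement that $y^*$ restricted to the auxiliary graph $H$ is a fractional perfect matching of $H$. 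The one genuinely nontrivial external ingredient is the half-integrality of the extreme points of the fractional perfect matching polytope (support a disjoint union of matching edges and odd cycles covering $V(H)$); this is a classical theorem of Balinski/Edmonds type and legitimately yields the desired fixed-point-free permutation $\phi$ by transposing matched pairs and rotating each odd cycle. The original proofs in the cited sources are in the same duality-plus-combinatorics spirit, so nothing is lost; if one wished to avoid quoting the polytope structure theorem, one would have to reprove the equivalence between the existence of a fractional perfect matching and a spanning union of disjoint edges and odd cycles, which is essentially the same work.
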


For an explicit characterization of graphs $G$ satisfying $\dim_f(G)=\frac{|V(G)|}{2}$, we recall the following construction from~\cite{fracdim2}. Let $\mathcal{K}=\{K_{s}: s \ge 2\}$ and $\overline{\mathcal{K}}=\{\overline{K}_t: t \ge 2\}$. Let $H[\mathcal{K} \cup \overline{\mathcal{K}}]$ be the family of graphs obtained from a connected graph $H$ by replacing each vertex $u_i \in V(H)$ by a graph $H_i \in \mathcal{K} \cup \overline{\mathcal{K}}$, and each vertex in $H_i$ is adjacent to each vertex in $H_j$ if and only if $u_iu_j \in E(H)$.

\begin{theorem}~\emph{\cite{fracdim2}}\label{n_2}
Let $G$ be a connected graph of order at least two. Then $\dim_f(G)=\frac{|V(G)|}{2}$ if and only if $G \in H[\mathcal{K} \cup \overline{\mathcal{K}}]$ for some connected graph $H$.
\end{theorem}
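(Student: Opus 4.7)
The plan is to reduce the characterization to a statement about twin equivalence classes via Theorem~\ref{fracdim_bounds}, and then translate the twin structure into the blow-up construction $H[\mathcal{K}\cup\overline{\mathcal{K}}]$. First I would establish the key observation that, for distinct $u,w \in V(G)$, the condition $|R_v\{u,w\}| = 2$ is equivalent to $u$ and $w$ being twins. One direction: $u$ and $w$ always lie in $R_v\{u,w\}$, so $|R_v\{u,w\}| = 2$ forces $d(u,z) = d(w,z)$ for every $z \in V(G)\setminus\{u,w\}$; specializing to $z \in N(u)\setminus\{w\}$ yields $z \in N(w)\setminus\{u\}$, and by symmetry $N(u)\setminus\{w\} = N(w)\setminus\{u\}$. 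The converse uses a shortest-path shortcut argument: for twins $u,w$ and a shortest $u$-$z$ path with second vertex $v$, the twin relation forces $v \neq w$ (else $u \to v_2 \to \cdots \to z$ would shorten the path, since $v_2 \in N(w)\setminus\{u\} \subseteq N(u)$), and then $v \in N(w)$, giving $d(w,z) \leq d(u,z)$; symmetry yields equality.

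Combining this with Theorem~\ref{fracdim_bounds}, $\dim_f(G) = |V(G)|/2$ if and only if there is a fixed-point-free bijection $\phi : V(G) \to V(G)$ pairing each $v$ with a twin $\phi(v)$. Such a $\phi$ exists if and only if every twin equivalence class of $G$ has size at least $2$: the reverse follows by choosing any fixed-point-free permutation inside each class, and the forward follows because $v$ and $\phi(v)$ lie in the same twin class.

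It remains to show that every twin class having size $\ge 2$ is equivalent to $G \in H[\mathcal{K}\cup\overline{\mathcal{K}}]$ for some connected $H$. Let $C_1,\ldots,C_k$ be the twin equivalence classes of $G$; each induces a clique or an independent set by the observation of Hernando et al.\ recalled at the start of the section. The crucial structural step is to verify that for $i \neq j$, the adjacency between $C_i$ and $C_j$ is either complete or empty: if $x \in C_i$, $y \in C_j$ and $xy \in E(G)$, then for any other $x' \in C_i$, the twin relation forces $y \in N(x)\setminus\{x'\} = N(x')\setminus\{x\}$, so $x'y \in E(G)$; applying the same argument on the $C_j$ side yields complete bipartite adjacency. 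Defining $H$ on $\{C_1,\ldots,C_k\}$ by joining $C_i$ to $C_j$ exactly when they are fully joined in $G$ produces a connected graph (connectedness inherited from $G$ since any $G$-path projects to an $H$-walk), and the assumption $|C_i| \ge 2$ places each $G[C_i]$ in $\mathcal{K}\cup\overline{\mathcal{K}}$, hence $G \in H[\mathcal{K}\cup\overline{\mathcal{K}}]$. Conversely, if $G$ arises from a connected $H$ via blow-ups $H_i \in \mathcal{K}\cup\overline{\mathcal{K}}$, then any two vertices of the same $H_i$ share the same external neighborhood (prescribed by $E(H)$) and are either both adjacent to or both non-adjacent to every other vertex of $H_i$, hence are twins; since $|V(H_i)| \ge 2$, every vertex of $G$ has a twin distinct from itself.

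The main obstacle is the structural step verifying that adjacencies between distinct twin classes are uniformly complete or empty; this is where the definition of twins must be applied with care. Everything else is routine bookkeeping on top of Theorem~\ref{fracdim_bounds} and the twin-equivalence reformulation.
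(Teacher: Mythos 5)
Your argument is correct and complete. Note, however, that the paper itself offers no proof of Theorem~\ref{n_2}: it is recalled verbatim from~\cite{fracdim2}, so there is no in-paper argument to compare against. Your derivation --- establishing that $|R_v\{u,w\}|=2$ exactly when $u$ and $w$ are distinct twins, translating the fixed-point-free bijection of Theorem~\ref{fracdim_bounds} into the condition that every twin equivalence class has size at least two, and then matching that condition with membership in $H[\mathcal{K}\cup\overline{\mathcal{K}}]$ via the fact that adjacency between distinct twin classes is uniformly complete or empty --- is the standard route and is carried out without gaps; in particular you correctly handle the shortcut argument showing twins are equidistant from all other vertices, and the degenerate case of a single twin class (where $H=K_1$ and $G$ must be a clique by connectivity) is covered by your construction.
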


Now, we recall that metric dimension is not a monotone parameter on subgraph inclusion (see~\cite{lineG}). We recall the following stronger result from~\cite{broadcast}.

\begin{theorem}\emph{\cite{broadcast}}
There exist connected graphs $G$ and $H$ such that $H \subset G$ and $\frac{\dim(H)}{\dim(G)}$ can be arbitrarily large.
\end{theorem}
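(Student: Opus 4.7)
The plan is to prove the statement by explicit construction: for each integer $n \ge 2$, I would exhibit a connected graph $G_n$ containing a connected subgraph $H_n$ such that $\dim(H_n) = n-1$ while $\dim(G_n) = O(\log n)$, making the ratio arbitrarily large as $n \to \infty$. The choice $H_n = K_n$ is natural, since its vertices form a single twin equivalence class and Observation~\ref{obs_twin}(a) forces $\dim(K_n) = n-1$. The task is then to enlarge $K_n$ into a connected supergraph $G_n$ that destroys the twin structure while keeping the metric dimension logarithmic in $n$.

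The supergraph I would use is a binary-selector gadget. Set $k = \lceil \log_2 n \rceil$, label the vertices of $H_n = K_n$ as $\{v_0, v_1, \ldots, v_{n-1}\}$, and form $G_n$ by adjoining $k$ new pairwise-nonadjacent vertices $a_0, a_1, \ldots, a_{k-1}$, placing the edge $a_j v_i$ precisely when the $j$-th bit of $i$ in binary equals $1$. Each column of the resulting bit table contains at least one $1$ for $n \ge 2$, so every selector $a_j$ is adjacent to some $v_i$, and $G_n$ is connected with $H_n$ as an induced subgraph.

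I would then verify that $S = \{a_0, a_1, \ldots, a_{k-1}\}$ resolves $G_n$. A direct calculation gives $d(a_j, v_i) \in \{1,2\}$, with value $1$ exactly when the $j$-th bit of $i$ is $1$, so $\code_S(v_i)$ encodes the binary representation of $i$ and separates distinct clique vertices. Since the selectors are pairwise nonadjacent and every selector-to-selector path passes through the clique, $d(a_j, a_{j'}) \in \{2,3\}$ for $j \ne j'$; hence $\code_S(a_j)$ has its unique zero coordinate in position $j$, which simultaneously separates pairs of selectors and separates each selector from every $v_i$ (whose code lies in $\{1,2\}^k$). This yields $\dim(G_n) \le k = \lceil \log_2 n \rceil$, and therefore $\dim(H_n)/\dim(G_n) \ge (n-1)/\lceil \log_2 n \rceil$, which tends to infinity. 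The main obstacle is precisely this verification step---ruling out accidental distance coincidences---but it goes through cleanly because all relevant distances are forced into the narrow range $\{0,1,2,3\}$, so the structure of the codes is governed exactly by the bit patterns planted in the construction.
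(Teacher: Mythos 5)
Your proposal is correct, but it uses a genuinely different construction from the one the paper recalls from~\cite{broadcast}. The paper takes $H_m=K_{m(m+1)/2}$ with its vertex set partitioned into blocks $V_1,\ldots,V_m$ of sizes $1,2,\ldots,m$, and builds $G_m$ by attaching $m$ new vertices $u_1,\ldots,u_m$, where $u_i$ dominates $V_i$ together with the $i$-th vertex of each later block; the set $\{u_1,\ldots,u_m\}$ is then a resolving set, giving $\dim(H_m)/\dim(G_m)\ge (m(m+1)/2-1)/m$. You instead attach only $k=\lceil \log_2 n\rceil$ selector vertices to $K_n$ and encode each clique vertex's index in binary via adjacency. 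Both arguments rest on the same two pillars: the twin argument forcing $\dim(K_n)=n-1$ (Observation~\ref{obs_twin}(a)), and the fact that all distances from the added vertices are confined to $\{0,1,2,3\}$ and are dictated entirely by the planted adjacency pattern, so the codes cannot collide accidentally. Your verification of the latter is complete: the codes of clique vertices lie in $\{1,2\}^k$ and realize distinct bit patterns, while each selector's code has its unique zero in its own coordinate, which separates selectors from each other and from the clique. One small point worth stating explicitly is that every column of the bit table is nonempty because $2^{k-1}\le n-1$, which you do note; this is what guarantees each $a_j$ has a neighbor and hence that $d(a_j,v_i)=2$ (not more) when bit $j$ of $i$ is $0$. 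Quantitatively your construction is stronger: measured against the order $N$ of the supergraph, you obtain a ratio of roughly $N/\log_2 N$ versus the paper's roughly $\sqrt{N/2}$, though the theorem only asks that the ratio be unbounded, so both suffice.
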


Following~\cite{broadcast}, for $m \ge 3$, let $H_m=K_{\frac{m(m+1)}{2}}$; let $V(H_m)$ be partitioned into $V_1, V_2, \ldots, V_m$ such that $V_i=\{w_{i,1}, w_{i,2}, \ldots, w_{i,i}\}$ with $|V_i|=i$, where $i \in \{1,2,\ldots,m\}$. Let $G_m$ be the graph obtained from $H_m$ and $m$ isolated vertices $u_1,u_2, \ldots, u_m$ such that, for each $i \in \{1,2,\ldots, m\}$, $u_i$ is joined by an edge to each vertex of $V_i \cup (\cup_{j=i+1}^{m}\{w_{j,i}\})$. See Figure~\ref{fig_subgraph} when $m=4$. Note that $H_m\subset G_m$, $\dim_f(H_m) =\frac{m(m+1)}{4}$ by Theorem~\ref{n_2}, and $\dim_f(G_m) \le m$ by Observation~\ref{obs_frac}(a) since $\{u_1, u_2, \ldots, u_m\}$ forms a resolving set of $G_m$ (see~\cite{broadcast}). So, $\frac{\dim_f(H_m)}{\dim_f(G_m)} \ge \frac{m+1}{4} \rightarrow \infty$ as $m\rightarrow \infty$. Thus, we have the following corollary.

\begin{figure}[ht]
\centering
\begin{tikzpicture}[scale=.7, transform shape]

\node [draw, shape=circle, scale=.8] (0) at  (0, 0) {};
\node [draw, shape=circle, scale=.8] (1) at  (1.5, 0) {};
\node [draw, shape=circle, scale=.8] (2) at  (3, 0) {};
\node [draw, shape=circle, scale=.8] (3) at  (4.5, 0) {};
\node [draw, shape=circle, scale=.8] (4) at  (6, 0) {};
\node [draw, shape=circle, scale=.8] (5) at  (7.5, 0) {};
\node [draw, shape=circle, scale=.8] (6) at  (9, 0) {};
\node [draw, shape=circle, scale=.8] (7) at  (10.5, 0) {};
\node [draw, shape=circle, scale=.8] (8) at  (12, 0) {};
\node [draw, shape=circle, scale=.8] (9) at  (13.5, 0) {};

\node [draw, shape=circle, scale=.8] (a) at  (0, -3) {};
\node [draw, shape=circle, scale=.8] (b) at  (2.25, -3) {};
\node [draw, shape=circle, scale=.8] (c) at  (6, -3) {};
\node [draw, shape=circle, scale=.8] (d) at  (11.25, -3) {};

\draw[thick, dotted](-0.5,-0.6) rectangle (0.5,0.7);
\draw[thick, dotted](1,-0.6) rectangle (3.5,0.7);
\draw[thick, dotted](4,-0.6) rectangle (8,0.7);
\draw[thick, dotted](8.5,-0.6) rectangle (14,0.7);

\node [scale=1] at (0.1,0.3) {$w_{1,1}$};
\node [scale=1] at (1.6,0.3) {$w_{2,1}$};
\node [scale=1] at (3.1,0.3) {$w_{2,2}$};
\node [scale=1] at (4.6,0.3) {$w_{3,1}$};
\node [scale=1] at (6.1,0.3) {$w_{3,2}$};
\node [scale=1] at (7.6,0.3) {$w_{3,3}$};
\node [scale=1] at (9.1,0.3) {$w_{4,1}$};
\node [scale=1] at (10.6,0.3) {$w_{4,2}$};
\node [scale=1] at (12.1,0.3) {$w_{4,3}$};
\node [scale=1] at (13.6,0.3) {$w_{4,4}$};

\node [scale=1.1] at (0,-3.4) {$u_{1}$};
\node [scale=1.1] at (2.25,-3.4) {$u_{2}$};
\node [scale=1.1] at (6,-3.4) {$u_{3}$};
\node [scale=1.1] at (11.25,-3.4) {$u_{4}$};

\node [scale=1.1] at (0,1.15) {$V_1$};
\node [scale=1.1] at (2.25,1.15) {$V_2$};
\node [scale=1.1] at (6,1.15) {$V_3$};
\node [scale=1.1] at (11.25,1.15) {$V_4$};
\node [scale=1.1] at (15,1.6) {$H_4=K_{10}$};
\node [scale=1.5] at (-3.5,-1) {$G_4$:};

\draw[thick,dashed] (6.75,0.3) ellipse (9.3cm and 1.7cm);

\draw(0)--(a)--(1);\draw(3)--(a)--(6);\draw(1)--(b)--(2);\draw(4)--(b)--(7);\draw(3)--(c)--(4);\draw(5)--(c)--(8);\draw(6)--(d)--(7);\draw(8)--(d)--(9);
\end{tikzpicture}
\caption{\small \cite{broadcast} Graphs $G_m$ and $H_m$ with $H_m \subset G_m$ such that $\frac{\dim(H_m)}{\dim(G_m)}$ can be arbitrarily large.}\label{fig_subgraph}
\end{figure}
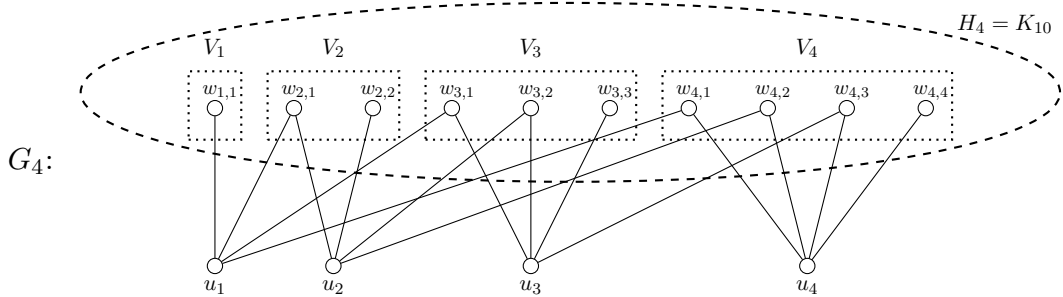

\begin{corollary}
There exist connected graphs $G$ and $H$ such that $H \subset G$ and $\frac{\dim_f(H)}{\dim_f(G)}$ can be arbitrarily large.
\end{corollary}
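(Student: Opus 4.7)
The plan is to apply the construction $(H_m, G_m)$ given in the paragraph and figure immediately preceding the statement. For each $m \ge 3$, set $H := H_m = K_{m(m+1)/2}$ and $G := G_m$; by construction $H_m \subset G_m$, so it only remains to exhibit a pair of upper/lower bounds whose ratio grows without bound as $m \to \infty$.

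The first step is the exact evaluation $\dim_f(H_m) = \frac{m(m+1)}{4}$. Since $H_m$ is a complete graph on $\frac{m(m+1)}{2}$ vertices, it belongs to the family $H[\mathcal{K} \cup \overline{\mathcal{K}}]$ (take the generating graph to be $K_1$, with its single vertex replaced by $K_{m(m+1)/2} \in \mathcal{K}$); Theorem~\ref{n_2} then gives the value $\frac{|V(H_m)|}{2}$ directly. The second step is to check that $U := \{u_1,\ldots,u_m\}$ is a (vertex) resolving set of $G_m$, which, via Observation~\ref{obs_frac}(a), yields $\dim_f(G_m) \le \dim(G_m) \le |U| = m$. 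Combining the two bounds,
\[
\frac{\dim_f(H_m)}{\dim_f(G_m)} \;\ge\; \frac{m(m+1)/4}{m} \;=\; \frac{m+1}{4} \;\longrightarrow\; \infty,
\]
which is exactly the conclusion sought.

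The only substantive piece is verifying that $U$ resolves $G_m$, which I would do by a direct distance computation. Each $u_k$ has $d(u_k,u_k)=0$, and since the $u_i$'s form an independent set at mutual distance $2$, their codes with respect to $U$ are pairwise distinct. For any clique vertex $w_{i,j}$ (with $1 \le j \le i \le m$), the adjacency rule defining $G_m$ (namely that $u_k$ is joined precisely to $V_k \cup \{w_{\ell,k} : \ell > k\}$) makes $w_{i,j}$ adjacent to $u_i$ and, when $j < i$, to $u_j$; for every other index $k$ the vertex $u_k$ is reached in two steps through the clique. Thus $\code_U(w_{i,j})$ has entry $1$ at positions $i$ and $j$ (or a single $1$ at position $i$ when $j=i$) and entry $2$ everywhere else, so distinct $w_{i,j}$'s yield distinct codes, and no $w_{i,j}$'s code contains a $0$, so they are distinguished from the $u_k$'s as well.

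The main (and essentially only) obstacle is this distance/coding bookkeeping in $G_m$; everything else is an appeal to Theorem~\ref{n_2} and Observation~\ref{obs_frac}(a). Once $U$ is confirmed to resolve $G_m$, the corollary follows in one line.
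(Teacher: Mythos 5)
Your proposal is correct and follows exactly the paper's argument: the paper establishes the corollary via the same pair $(H_m,G_m)$, computing $\dim_f(H_m)=\frac{m(m+1)}{4}$ from Theorem~\ref{n_2} and bounding $\dim_f(G_m)\le m$ through Observation~\ref{obs_frac}(a) and the resolving set $\{u_1,\ldots,u_m\}$. The only difference is that you verify directly that $\{u_1,\ldots,u_m\}$ resolves $G_m$ (your code computation is accurate), whereas the paper simply cites \cite{broadcast} for that fact.
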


Next, we show that edge dimension is not a monotone parameter on subgraph inclusion.

\begin{lemma}\label{edim_subgraph}
There exist connected graphs $G$ and $H$ such that $H \subset G$ and $\edim(G) <\edim(H)$. 
\end{lemma}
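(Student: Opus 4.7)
My plan is to construct connected graphs $H\subset G$ with $\edim(G)<\edim(H)$ by taking $H=K_n$ for sufficiently large $n$ and attaching a small number of ``coding'' vertices to obtain $G$. Since $\edim(K_n)=n-1$ (see~\cite{edim}), the goal is to show $\edim(G)=O(\log n)$, which is strictly smaller for $n$ large. Choose an integer $k$ with $\binom{n}{2}\le 2^k$, so there exist nonempty sets $A_1,\dots,A_n\subseteq\{1,\dots,k\}$ whose pairwise unions $A_i\cup A_j$ satisfy $A_i\cup A_j=A_{i'}\cup A_{j'}\Rightarrow\{i,j\}=\{i',j'\}$---a ``union-free'' family of size $n$, which exists for $k=\lceil 2\log_2 n\rceil+1$ by a standard counting/probabilistic argument. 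Define $G$ by adjoining new vertices $x_1,\dots,x_k$ to $K_n$ with edges $x_\ell v_j$ iff $\ell\in A_j$, where $v_1,\dots,v_n$ are the vertices of $K_n$. Clearly $H=K_n\subset G$ and (assuming every coordinate $\ell$ appears in some $A_j$) $G$ is connected.

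Next, I would verify that $S=\{x_1,\dots,x_k\}$ is an edge resolving set of $G$. Observe that $G$ has diameter $2$, with $d(x_\ell,v_j)=1$ if $\ell\in A_j$ and $d(x_\ell,v_j)=2$ otherwise. For any $K_n$-edge $v_iv_j$, the $x_\ell$-entry of $\code_S(v_iv_j)$ is $1$ if $\ell\in A_i\cup A_j$ and $2$ otherwise; the union-free property makes these codes pairwise distinct. For any edge $x_\ell v_j$, the code is $0$ at coordinate $\ell$ (which separates it from every $K_n$-edge, as $K_n$-edges have no zero coordinate), while at coordinate $x_m$ with $m\ne\ell$ it equals $1$ if $m\in A_j$ and $2$ otherwise; two different $x$-edges are then separated either by distinct ``zero positions'' or, when they share the same $\ell$, by the distinct labels $A_j\ne A_{j'}$ that show up in the remaining coordinates. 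Hence $\edim(G)\le k$.

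Consequently $\edim(G)\le k=O(\log n)<n-1=\edim(H)$ for all sufficiently large $n$, proving the lemma. The main obstacle is securing the existence of a union-free family of size $n$ in $2^{\{1,\dots,k\}}$ with $k$ much smaller than $n$; this is a classical combinatorial fact, following either from an explicit construction (e.g., via Sidon sets in $\mathbb{Z}/N$) or a direct probabilistic argument using the trivial upper bound $\binom{n}{2}\le 2^k$ on the number of possible unions. Once such a family is in hand, the verification that $S$ resolves every edge of $G$ is a routine diameter-$2$ distance computation.
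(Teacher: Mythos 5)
Your argument is correct in outline, but it takes a genuinely different and much heavier route than the paper. The paper simply exhibits two small explicit graphs: $H=K_{4,2}$ with $\edim(H)=4$ (quoting Kelenc et al.) sitting inside an $11$-vertex graph $G$ for which a $3$-element set is verified to be an edge resolving set by listing all edge codes, giving $\edim(G)\le 3<4=\edim(H)$. You instead take $H=K_n$ with $\edim(K_n)=n-1$ and attach $k$ coding vertices governed by a family $A_1,\dots,A_n\subseteq\{1,\dots,k\}$ whose pairwise unions are all distinct; your verification that $\{x_1,\dots,x_k\}$ edge-resolves $G$ is sound (the zero coordinate separates pendant-type edges from clique edges, the union-free property separates clique edges from each other, and two edges $x_\ell v_j$, $x_\ell v_{j'}$ are separated because $A_j\ne A_{j'}$ must differ in a coordinate other than $\ell$ once both contain $\ell$). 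The one soft spot is the asserted parameter $k=\lceil 2\log_2 n\rceil+1$: that matches the counting \emph{lower} bound $\binom{n}{2}\le 2^k$ but is not what a routine probabilistic argument delivers (a first-moment computation over pairs of pairs needs roughly $k\ge 4\log_2 n/\log_2(4/3)\approx 9.7\log_2 n$, and whether the counting bound is essentially achievable is the Frankl--F\"{u}redi weakly-union-free problem). This does not damage the lemma, since any $k=O(\log n)$, indeed any $k<n-1$, suffices. What your approach buys is considerably more than the lemma asks for: it gives $\edim(H)-\edim(G)\ge n-1-O(\log n)$ and even $\edim(H)/\edim(G)\to\infty$, which addresses the paper's subsequent proposition on arbitrarily large differences and the $\edim$ half of its open question on arbitrarily large ratios; what the paper's approach buys is a completely elementary, checkable-by-hand certificate with no appeal to extremal set theory.
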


\begin{proof}
Let $H=K_{4,2}$ and $G$ be the graphs in Figure~\ref{fig_subgraph_edim}; notice that $H \subset G$. Then $\edim(H)=\edim(K_{4,2})=4$ (see~\cite{edim}) and $\edim(G) \le 3$ since $\{a,b,c\}$ forms an edge resolving set for $G$; thus, $\edim(G) <\edim(H)$.
\end{proof}

\begin{figure}[ht]
\centering
\begin{tikzpicture}[scale=.7, transform shape]

\node [draw, shape=circle, scale=.8] (1) at  (0.2,6) {};
\node [draw, shape=circle, scale=.8] (2) at  (0.2,4) {};
\node [draw, shape=circle, scale=.8] (3) at  (0.2,2) {};
\node [draw, shape=circle, scale=.8] (4) at  (0.2,0) {};
\node [draw, shape=circle, scale=.8] (a) at  (5, 4) {};
\node [draw, shape=circle, scale=.8] (b) at  (5, 2) {};

\node [draw, fill=black, shape=circle, scale=.8] (0) at  (-0.5, 1) {};
\node [draw, shape=circle, scale=.8] (00) at  (-0.5, -1.5) {};
\node [draw, fill=black, shape=circle, scale=.8] (01) at  (3.5, -1.5) {};
\node [draw, fill=black, shape=circle, scale=.8] (02) at  (-2, 1) {};
\node [draw, shape=circle, scale=.8] (03) at  (-1.5, 3) {};

\draw[thick, dotted](-0.1,-0.25) rectangle (5.3,6.3);
\node [scale=1.1] at (2.65,6.7) {$H=K_{4,2}$};
\node [scale=1.5] at (-4.5,2.5) {$G$:};
\node [scale=1.2] at (-2.3,1) {$a$};
\node [scale=1.2] at (-0.8,1) {$b$};
\node [scale=1.2] at (3.85,-1.5) {$c$};
\node [scale=1] at (-1.4,3.65) {$(1,3,2)$};
\node [scale=1] at (-2.4,2.2) {$(0,3,2)$};
\node [scale=1] at (-2,-0.3) {$(0,2,1)$};
\node [scale=1] at (1.35,-1.8) {$(1,2,0)$};
\node [scale=1] at (0.55,-0.75) {$(1,1,1)$};
\node [scale=1] at (4.55,-0.7) {$(2,2,0)$};
\node [scale=1] at (-0.75,1.6) {$(3,0,2)$};
\node [scale=1] at (-0.75,0.35) {$(2,0,2)$};
\node [scale=1] at (2.1,5.7) {$(3,2,2)$};
\node [scale=1] at (0.65,5) {$(3,2,1)$};
\node [scale=1] at (1,4.2) {$(2,2,2)$};
\node [scale=1] at (0.75,3.35) {$(2,2,1)$};
\node [scale=1] at (0.75,2.65) {$(3,1,2)$};
\node [scale=1] at (1,1.75) {$(3,1,1)$};
\node [scale=1] at (0.75,1.05) {$(2,1,2)$};
\node [scale=1] at (2,0.35) {$(2,1,1)$};

\draw(a)--(1)--(b);\draw(a)--(2)--(b);\draw(a)--(3)--(b);\draw(a)--(4)--(b);\draw(3)--(0)--(4);\draw(2)--(03)--(02)--(00)--(01)--(b);\draw(00)--(4);

\end{tikzpicture}
\caption{\small Graphs $G$ and $H$ such that $H\subset G$ and $\edim(G) <\edim(H)$, where the 3-vector next to each edge $e\in E(G)$ is $\code_{\{a,b,c\}}(e)$.}\label{fig_subgraph_edim}
\end{figure}
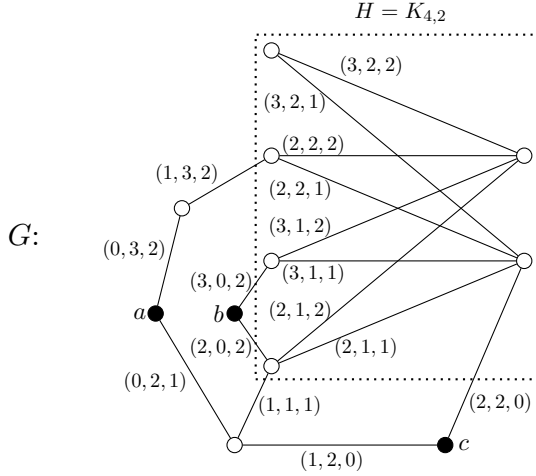

Based on the construction of graphs in Figure~\ref{fig_subgraph_edim}, we show the existence of graphs $G$ and $H$ such that $H \subset G$ and both $\edim(H)-\edim(G)$ and $\edim_f(H)-\edim_f(G)$ can be arbitrarily large.

\begin{proposition}
There exist graphs $G$ and $H$ such that $H \subset G$ and both $\edim(H)-\edim(G)$ and $\edim_f(H)-\edim_f(G)$ can be arbitrarily large.
\end{proposition}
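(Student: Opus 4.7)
The plan is to extend the ad hoc pair $(H,G)$ of Figure~\ref{fig_subgraph_edim} to an unbounded family. For each integer $n\ge 4$, let $H_n=K_{n,2}$ with parts $U=\{u_1,\ldots,u_n\}$ and $V=\{v_1,v_2\}$. Since $U$ and $V$ each form a twin equivalence class of $H_n$, Observation~\ref{obs_twin}(b) forces $\edim(H_n)\ge n$, attained by $\{u_1,\ldots,u_{n-1},v_1\}$. Likewise, for distinct $i\ne j$ one has $R_e\{u_iv_1,u_jv_1\}=\{u_i,u_j\}$ and $R_e\{u_iv_1,u_iv_2\}=\{v_1,v_2\}$, so Observation~\ref{obs_twin}(d) combined with the summation $(n-1)\sum_i h(u_i)\ge \binom{n}{2}$ forces $\edim_f(H_n)\ge \tfrac{n}{2}+1$, attained by $h\equiv\tfrac{1}{2}$.

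Next, I will build a supergraph $G_n\supset H_n$ whose edge dimension is logarithmic in $n$. Set $k=\lceil \log_2 n\rceil$ and, for $b\in\{1,\ldots,k\}$, let $S_b$ collect those $u_i$ whose $b$-th binary digit of $i$ equals $1$. Form $G_n$ by adjoining $2k+1$ new vertices $\{a_b,q_b:1\le b\le k\}\cup\{c\}$ together with new edges $q_ba_b$, $q_bu_i$ for every $u_i\in S_b$, and $cv_1$. I claim that $S^{*}=\{a_1,\ldots,a_k,c\}$ is an edge resolving set of $G_n$. Exploiting that $K_{n,2}$ has diameter $2$, one checks the distance identities
\[
d(u_i,a_b)=\begin{cases}2,& u_i\in S_b,\\ 4,& u_i\notin S_b,\end{cases}\qquad d(v_j,a_b)=3,\qquad d(v_1,c)=1,\quad d(v_2,c)=3,
\]
which show that the $a_b$-coordinate of $\code_{S^{*}}(u_iv_j)$ records bit $b$ of $i$ and the $c$-coordinate records $j$. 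Uniqueness of binary addresses then separates all edges of $H_n$, and a finite case analysis over pairs of the new types $(q_bu_i,q_bu_j)$, $(q_bu_i,q_{b'}u_j)$, $(q_ba_b,q_{b'}a_{b'})$, $(q_bu_i,cv_1)$, together with mixed pairs involving edges of $H_n$, handles the remainder. Consequently $\edim(G_n)\le k+1$, and by Observation~\ref{obs_frac}(b), $\edim_f(G_n)\le k+1 = O(\log n)$.

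Combining the bounds,
\[
\edim(H_n)-\edim(G_n)\ge n-O(\log n)\to\infty,\qquad \edim_f(H_n)-\edim_f(G_n)\ge \tfrac{n}{2}+1-O(\log n)\to\infty,
\]
so the proposition follows upon taking $H=H_n$ and $G=G_n$ for $n$ large. The step I expect to be the main obstacle is the distance bookkeeping for edges living inside the coding gadget: notably, when separating $q_bu_i$ from $q_{b'}u_j$, the $a_{b'}$-coordinate of $q_bu_i$ depends on whether $d(q_b,a_{b'})=3$ or $5$ (equivalently, on whether $S_b\cap S_{b'}=\emptyset$), yet in either case that coordinate equals $2$ exactly when $u_i\in S_{b'}$. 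Pinning down that structural feature is what makes the case analysis terminate cleanly; once it is in place, the remaining verification reduces to checking that distinct binary expansions of indices force distinct $\code_{S^{*}}$-vectors.
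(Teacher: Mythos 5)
Your argument is correct, and it takes a genuinely different route from the paper. The paper also starts from a twin-heavy $H$ (a ``ladder'' of $3k$ twin pairs $x_i,y_i$, so $\edim(H_k)=\edim_f(H_k)=3k$ by Observation~\ref{obs_twin}(b),(d)) and then attaches pendant structures ($K_1$ and $k$ copies of $P_4$) that merely \emph{break} the twin relations, yielding $\edim(G_k)\le 2k+1$ and hence a difference of only $k-1$; your choice $H_n=K_{n,2}$ plays the same twin role, but your binary-addressing gadget compresses the edge resolving set of the supergraph to $\lceil\log_2 n\rceil+1$ vertices. I verified the distance bookkeeping: $d(u_i,a_b)\in\{2,4\}$ according to bit $b$ of $i$, $d(v_j,a_b)=3$, $d(v_1,c)=1$, $d(v_2,c)=3$, and the codes of the gadget edges ($q_ba_b$ has $b$-th coordinate $0$; $q_bu_i$ has $b$-th coordinate $1$ and $c$-coordinate $2$; $cv_1$ is the unique edge with $c$-coordinate $0$) are pairwise distinct and distinct from the codes of $E(K_{n,2})$, with the $S_b\cap S_{b'}$ ambiguity you flag resolved exactly as you say. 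The one presentational gap is that this case analysis is asserted rather than written out; for a complete proof it must be included, since the whole construction stands or falls on it. Two remarks worth making explicit: first, for the proposition you only need the lower bounds $\edim(H_n)\ge n$ and $\edim_f(H_n)\ge \frac{n}{2}+1$ together with the upper bounds on $G_n$, so the ``attained by'' claims are dispensable; second, and more importantly, your construction gives $\frac{\edim(H_n)}{\edim(G_n)}\ge \frac{n}{\lceil\log_2 n\rceil+1}\rightarrow\infty$ and $\frac{\edim_f(H_n)}{\edim_f(G_n)}\ge \frac{n/2+1}{\lceil\log_2 n\rceil+1}\rightarrow\infty$, which is strictly stronger than the stated proposition and in fact answers affirmatively the open question the paper poses immediately after it (whether both ratios can be made arbitrarily large for $H\subset G$). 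You should state that consequence explicitly rather than leave it implicit.
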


\begin{proof}
Let $k\ge2$. Let $H_k$ be a graph with vertex set $V(H_k)=\cup_{i=1}^{3k}\{x_i, y_i\}$ and edge set $E(H_k)=\cup_{i=1}^{3k-1}\{x_ix_{i+1}, y_iy_{i+1}, x_iy_{i+1}, y_ix_{i+1}\}$. We note that $x_i$ and $y_i$ are twins in $H_k$, where $i\in\{1,2,\ldots, 3k\}$. Let $G_k$ be the graph obtained from the disjoint union of $H_k$, $K_1$, and $k$ copies of $P_4$ that is given by $a_i, b_i, c_i, d_i$ for $i\in\{1,2,\ldots, k\}$ as follows: (i) the vertex $z\in V(K_1)$ is adjacent to $x_1$ and $y_1$ in $G_k$; (ii) $y_{3j+1}$ is adjacent to $b_{j+1}$, $y_{3j+2}$ is adjacent to $a_{j+1}$, and $y_{3j+3}$ is adjacent to $d_{j+1}$ in $G_k$, where $j\in\{0,1, \ldots, k-1\}$. See Figure~\ref{fig_subgraph_edim_large} for graphs $G_3$ and $H_3$.

First, we show that $\edim(H_k)=3k=\edim_f(H_k)$. Note that $\edim(H_k)\ge 3k$ by Observation~\ref{obs_twin}(b) and $\edim_f(H_k)\ge 3k$ by Observation~\ref{obs_twin}(d). Since $\cup_{i=1}^{3k}\{x_i\}$ forms an edge resolving set of $H_k$, $\edim_f(H_k) \le \edim(H_k) \le 3k$ by observation~\ref{obs_frac}(b). 

Second, we show that $\edim_f(G_k) \le\edim(G_k) \le 1+2k$. Since $\{z\} \cup(\cup_{i=1}^{k} \{a_i, c_i\})$ forms an edge resolving set of $G_k$, $\edim_f(G_k) \le\edim(G_k) \le 2k+1$ by Observation~\ref{obs_frac}(b).

Thus, $\edim(H_k)-\edim(G_k) \ge 3k-(1+2k)=k-1 \rightarrow \infty$  and $\edim_f(H_k)-\edim_f(G_k) \ge k-1 \rightarrow \infty$ as $k \rightarrow \infty$.~\hfill
\end{proof}

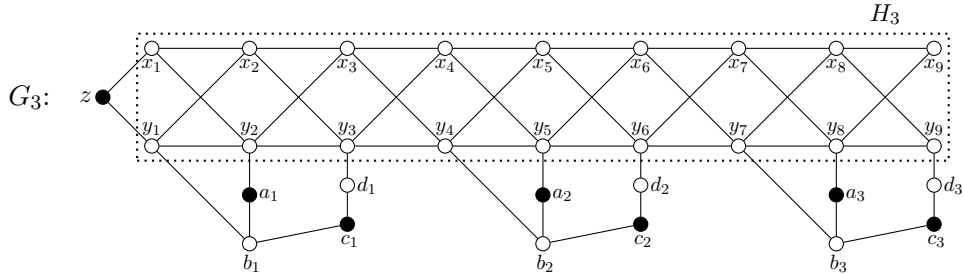
\begin{figure}[ht]
\centering
\begin{tikzpicture}[scale=.65, transform shape]

\node [draw, fill=black, shape=circle, scale=.8] (0) at  (-1,1) {};

\node [draw, shape=circle, scale=.8] (11) at  (0,2) {};
\node [draw, shape=circle, scale=.8] (12) at  (0,0) {};
\node [draw, shape=circle, scale=.8] (21) at  (2,2) {};
\node [draw, shape=circle, scale=.8] (22) at  (2,0) {};
\node [draw, shape=circle, scale=.8] (31) at  (4,2) {};
\node [draw, shape=circle, scale=.8] (32) at  (4,0) {};

\node [draw, shape=circle, scale=.8] (a) at  (2,-2) {};
\node [draw, fill=black, shape=circle, scale=.8] (a1) at  (2,-1) {};
\node [draw, shape=circle, scale=.8] (a21) at  (4,-0.8) {};
\node [draw, fill=black, shape=circle, scale=.8] (a22) at  (4,-1.6) {};

\node [draw, shape=circle, scale=.8] (41) at  (6,2) {};
\node [draw, shape=circle, scale=.8] (42) at  (6,0) {};
\node [draw, shape=circle, scale=.8] (51) at  (8,2) {};
\node [draw, shape=circle, scale=.8] (52) at  (8,0) {};
\node [draw, shape=circle, scale=.8] (61) at  (10,2) {};
\node [draw, shape=circle, scale=.8] (62) at  (10,0) {};

\node [draw, shape=circle, scale=.8] (b) at  (8,-2) {};
\node [draw, fill=black, shape=circle, scale=.8] (b1) at  (8,-1) {};
\node [draw, shape=circle, scale=.8] (b21) at  (10,-0.8) {};
\node [draw, fill=black, shape=circle, scale=.8] (b22) at  (10,-1.6) {};

\node [draw, shape=circle, scale=.8] (71) at  (12,2) {};
\node [draw, shape=circle, scale=.8] (72) at  (12,0) {};
\node [draw, shape=circle, scale=.8] (81) at  (14,2) {};
\node [draw, shape=circle, scale=.8] (82) at  (14,0) {};
\node [draw, shape=circle, scale=.8] (91) at  (16,2) {};
\node [draw, shape=circle, scale=.8] (92) at  (16,0) {};

\node [draw, shape=circle, scale=.8] (c) at  (14,-2) {};
\node [draw, fill=black, shape=circle, scale=.8] (c1) at  (14,-1) {};
\node [draw, shape=circle, scale=.8] (c21) at  (16,-0.8) {};
\node [draw, fill=black, shape=circle, scale=.8] (c22) at  (16,-1.6) {};

\draw[thick, dotted](-0.3,-0.3) rectangle (16.3,2.3);
\node [scale=1.3] at (15,2.7) {$H_3$};
\node [scale=1.5] at (-2.5,1) {$G_3$:};
\draw(11)--(0)--(12);
\draw(21)--(11)--(22);\draw(31)--(21)--(32);\draw(41)--(31)--(42);\draw(51)--(41)--(52);\draw(61)--(51)--(62);\draw(71)--(61)--(72);\draw(81)--(71)--(82);\draw(91)--(81)--(92);
\draw(21)--(12)--(22);\draw(31)--(22)--(32);\draw(41)--(32)--(42);\draw(51)--(42)--(52);\draw(61)--(52)--(62);\draw(71)--(62)--(72);\draw(81)--(72)--(82);\draw(91)--(82)--(92);
\draw(12)--(a);\draw(22)--(a1)--(a)--(a22)--(a21)--(32);
\draw(42)--(b);\draw(52)--(b1)--(b)--(b22)--(b21)--(62);
\draw(72)--(c);\draw(82)--(c1)--(c)--(c22)--(c21)--(92);

\node [scale=1.3] at (-1.35,1) {$z$};

\node [scale=1.1] at (0,1.65) {$x_1$};
\node [scale=1.1] at (2,1.65) {$x_2$};
\node [scale=1.1] at (4,1.65) {$x_3$};
\node [scale=1.1] at (6,1.65) {$x_4$};
\node [scale=1.1] at (8,1.65) {$x_5$};
\node [scale=1.1] at (10,1.65) {$x_6$};
\node [scale=1.1] at (12,1.65) {$x_7$};
\node [scale=1.1] at (14,1.65) {$x_8$};
\node [scale=1.1] at (16,1.65) {$x_9$};
\node [scale=1.1] at (0,0.35) {$y_1$};
\node [scale=1.1] at (2,0.35) {$y_2$};
\node [scale=1.1] at (4,0.35) {$y_3$};
\node [scale=1.1] at (6,0.35) {$y_4$};
\node [scale=1.1] at (8,0.35) {$y_5$};
\node [scale=1.1] at (10,0.35) {$y_6$};
\node [scale=1.1] at (12,0.35) {$y_7$};
\node [scale=1.1] at (14,0.35) {$y_8$};
\node [scale=1.1] at (16,0.35) {$y_9$};

\node [scale=1.1] at (2.4,-1) {$a_1$};
\node [scale=1.1] at (2.05,-2.4) {$b_1$};
\node [scale=1.1] at (4.05,-1.95) {$c_1$};
\node [scale=1.1] at (4.4,-0.8) {$d_1$};
\node [scale=1.1] at (8.4,-1) {$a_2$};
\node [scale=1.1] at (8.05,-2.4) {$b_2$};
\node [scale=1.1] at (10.05,-1.95) {$c_2$};
\node [scale=1.1] at (10.4,-0.8) {$d_2$};
\node [scale=1.1] at (14.4,-1) {$a_3$};
\node [scale=1.1] at (14.05,-2.4) {$b_3$};
\node [scale=1.1] at (16.05,-1.95) {$c_3$};
\node [scale=1.1] at (16.4,-0.8) {$d_3$};

\end{tikzpicture}
\caption{\small Graphs $G_k$ and $H_k$ such that $H_k\subset G_k$ and both $\edim(H_k)-\edim(G_k)$ and $\edim_f(H_k)-\edim_f(G_k)$ can be arbitrarily large, where $k\ge2$.}\label{fig_subgraph_edim_large}
\end{figure}

\begin{question}
Are there connected graphs $G$ and $H$ with $H \subset G$ such that both $\frac{\edim(H)}{\edim(G)}$ and $\frac{\edim_f(H)}{\edim_f(G)}$ can be arbitrarily large?
\end{question}

We conclude this section with some general results on fractional edge dimension. For any connected graph $G$ of order at least three, it is easy to see that $1\le \edim_f(G) \le \frac{|V(G)|}{2}$. We characterize graphs $G$ satisfying $\edim_f(G)=1$. We show that $\dim_f(G)=\frac{|V(G)|}{2}$ implies $\edim_f(G)=\frac{|V(G)|}{2}$, while there exist graphs $G$ with $\edim_f(G)=\frac{|V(G)|}{2}>\dim_f(G)$; in fact, there exists a family of graphs $G$ with $\edim_f(G)=\frac{|V(G)|}{2}$ such that $\frac{\edim_f(G)}{\dim_f(G)}$ can be arbitrarily large.

\begin{proposition}\label{edim_frac_bounds}
If $G$ is a connected graph of order $n\ge 3$, then $1\le \edim_f(G)\le \frac{n}{2}$.
\end{proposition}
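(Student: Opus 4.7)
The plan is to handle the two bounds separately. For the lower bound $\edim_f(G)\ge 1$, I would observe that since $G$ is connected with $n\ge 3$, the edge set $E(G)$ has size at least $n-1\ge 2$, so there exist two distinct edges $e_1,e_2\in E(G)$. For any edge resolving function $g$ of $G$, the defining inequality gives $g(R_e\{e_1,e_2\})\ge 1$, and since $g$ is nonnegative and $R_e\{e_1,e_2\}\subseteq V(G)$, we immediately get $g(V(G))\ge g(R_e\{e_1,e_2\})\ge 1$. Taking the infimum over all edge resolving functions yields $\edim_f(G)\ge 1$.

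For the upper bound $\edim_f(G)\le\frac{n}{2}$, the strategy is to show that the constant function $g_0:V(G)\to[0,1]$ defined by $g_0(v)=\tfrac{1}{2}$ for all $v\in V(G)$ is an edge resolving function. This will give $\edim_f(G)\le g_0(V(G))=\tfrac{n}{2}$. The reduction is: $g_0$ is edge resolving if and only if $|R_e\{e_1,e_2\}|\ge 2$ for every pair of distinct edges $e_1,e_2\in E(G)$, since then $g_0(R_e\{e_1,e_2\})\ge 2\cdot\tfrac{1}{2}=1$.

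Thus the core step is the claim: for any two distinct edges $e_1=xy$ and $e_2=uv$ of a connected graph, the set $R_e\{e_1,e_2\}$ contains at least two vertices. I would prove this by a brief case analysis on how many endpoints $e_1$ and $e_2$ share. If $e_1$ and $e_2$ share no endpoint, then each of $x,y,u,v$ is an endpoint of exactly one of the two edges, so each has distance $0$ to its own edge and distance $\ge 1$ to the other, giving four vertices in $R_e\{e_1,e_2\}$. If $e_1$ and $e_2$ share exactly one endpoint, say $x=u$, then $x\notin R_e\{e_1,e_2\}$, but the remaining endpoints $y$ and $v$ (which are distinct and not equal to the shared vertex) satisfy $d(y,e_1)=0$, $d(y,e_2)\ge 1$, and symmetrically $d(v,e_2)=0$, $d(v,e_1)\ge 1$, so $\{y,v\}\subseteq R_e\{e_1,e_2\}$.

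The only mild obstacle is being careful about the single-shared-endpoint case, to verify that $d(y,e_2)\ge 1$ and $d(v,e_1)\ge 1$ really hold (they do, because $y\ne u=x$ and $y\ne v$, so $y$ is not an endpoint of $e_2$, and symmetrically for $v$). Combining the two bounds completes the proof.
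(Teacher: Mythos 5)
Your proposal is correct and follows essentially the same route as the paper: both arguments take the constant function $g(v)=\tfrac{1}{2}$ and verify $|R_e\{e_1,e_2\}|\ge 2$ by the same case split on whether the two edges share an endpoint, while the lower bound is immediate from the definition (your explicit justification via $g(V(G))\ge g(R_e\{e_1,e_2\})\ge 1$ is just a slightly more detailed version of what the paper leaves implicit).
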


\begin{proof}
Let $G$ be a connected graph of order $n\ge 3$. The lower bound follows from the definition of fractional edge dimension. Next, we prove the upper bound. Let $g: V(G) \rightarrow [0,1]$ be a function defined by $g(v)=\frac{1}{2}$ for each $v\in V(G)$; then $g(V(G))=\frac{n}{2}$. It suffices to show that $g$ is an edge resolving function of $G$. Let $e_1=ab$ and $e_2=cd$ be any distinct edges in $G$. If $\{a,b\} \cap \{c,d\} =\emptyset$, then $R_e\{e_1, e_2\} \supseteq \{a,b,c,d\}$. If $\{a,b\} \cap \{c,d\} \neq \emptyset$, say $a=c$, by relabeling the vertices of $G$ if necessary, then $b\neq d$ and $R_e\{e_1, e_2\} \supseteq \{b,d\}$. In each case, $|R_e\{e_1, e_2\}|\ge 2$ and $g(R_e\{e_1, e_2\}) \ge 2 (\frac{1}{2})=1$. So, $g$ is an edge resolving function of $G$, and thus $\edim_f(G) \le g(V(G))=\frac{n}{2}$.~\hfill
\end{proof}

Next, we characterize graphs achieving the lower bound of Proposition~\ref{edim_frac_bounds}. We recall the following result.

\begin{proposition}\emph{\cite{frac_sdim}}\label{dim_frac_1} 
For any connected graph $G$ of order $n \ge 2$, $\dim_f(G)=1$ if and only if $G=P_n$.
\end{proposition}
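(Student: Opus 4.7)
The plan is to prove both implications of Proposition~\ref{dim_frac_1}, using the standard fact (for the harder direction) that $\dim(G)=1$ if and only if $G=P_n$.

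For the easy direction, I would first show that $\dim_f(G)\ge 1$ for every connected graph of order at least two: fix any two distinct vertices $x,y\in V(G)$ and note that any resolving function $g$ satisfies $g(V(G))\ge g(R_v\{x,y\})\ge 1$. Then, for $G=P_n$ with vertices labeled $v_1,v_2,\ldots,v_n$ along the path, define $g:V(P_n)\to[0,1]$ by $g(v_1)=1$ and $g(v_i)=0$ for $i\ge 2$. For every pair of distinct vertices $v_i,v_j$ with $i<j$, the endpoint $v_1$ satisfies $d(v_1,v_i)=i-1\neq j-1=d(v_1,v_j)$, so $v_1\in R_v\{v_i,v_j\}$ and hence $g(R_v\{v_i,v_j\})\ge 1$. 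Thus $g$ is a resolving function of $P_n$ with $g(V(P_n))=1$, yielding $\dim_f(P_n)\le 1$ and therefore equality.

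For the reverse direction, suppose $\dim_f(G)=1$ and let $g$ be a minimum resolving function, so $g(V(G))=1$. The key observation is a tight-constraint argument: for any pair of distinct vertices $x,y\in V(G)$, the resolving condition $g(R_v\{x,y\})\ge 1$ combined with $g(V(G))=1$ forces
\[
g\bigl(V(G)\setminus R_v\{x,y\}\bigr)=0.
\]
Hence every vertex $v$ in the support of $g$ (i.e., with $g(v)>0$) must lie in $R_v\{x,y\}$ for \emph{every} choice of distinct $x,y$. Pick any such $v$; then $v$ resolves every pair of vertices of $G$, so $\{v\}$ is a (vertex) resolving set and $\dim(G)=1$.

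The remaining step is to invoke the classical characterization of metric dimension one: a connected graph $G$ of order at least two satisfies $\dim(G)=1$ if and only if $G=P_n$ (Chartrand--Eroh--Johnson--Oellermann; see also Harary--Melter~\cite{harary} and Slater~\cite{slater}). Applying this result gives $G=P_n$, completing the proof. The only substantive step is recognizing that the tight $g(V(G))=1$ constraint collapses the fractional condition to a pointwise one on the support, allowing us to reduce to the integer case $\dim(G)=1$; once this reduction is in hand, everything else is immediate.
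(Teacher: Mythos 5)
Your proof is correct and uses essentially the same technique the paper attributes to~\cite{frac_sdim} and reproduces for the edge analogue (Proposition~\ref{edim_frac_1}): the tightness of $g(V(G))=1$ forces the support of $g$ into $\bigcap_{x\neq y}R_v\{x,y\}$, and that intersection is nonempty precisely when $G$ is a path. Your final step of phrasing this as ``some single vertex resolves every pair, hence $\dim(G)=1$'' is just a repackaging of the paper's observation that $\mathcal{C}\neq\emptyset$ if and only if $G=P_n$, so the two arguments coincide in substance.
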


In proving Proposition~\ref{edim_frac_1}, we use the same technique used for Proposition~\ref{dim_frac_1} in~\cite{frac_sdim} by adjusting to edge resolving function.  

\begin{proposition}\label{edim_frac_1}
For any connected graph $G$ of order $n \ge 3$, $\edim_f(G)=1$ if and only if $G=P_n$.
\end{proposition}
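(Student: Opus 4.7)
The plan is to prove the equivalence in both directions, reducing the fractional statement to the integer edge dimension statement via a support argument.

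For the easy direction, assume $G = P_n$ with vertices $v_1, v_2, \ldots, v_n$ listed along the path. I would define $g : V(G) \to [0,1]$ by $g(v_1) = 1$ and $g(v_j) = 0$ for $j \ge 2$, so that $g(V(G)) = 1$. Since $d(v_1, v_iv_{i+1}) = i-1$ for each $1 \le i \le n-1$, the edges of $P_n$ have pairwise distinct distances from $v_1$; thus $v_1 \in R_e\{e,e'\}$ for every pair of distinct edges, proving $g$ is an edge resolving function of weight $1$. Combined with the lower bound $\edim_f(G) \ge 1$ from Proposition~\ref{edim_frac_bounds}, this gives $\edim_f(P_n) = 1$.

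For the converse, suppose $\edim_f(G) = 1$ and let $g$ be an optimal edge resolving function, i.e., $g(V(G)) = 1$. The key observation, which is the fractional-to-integer reduction, is that for every pair of distinct edges $e_1, e_2 \in E(G)$, the inequalities $g(R_e\{e_1,e_2\}) \ge 1 = g(V(G))$ together with $g \ge 0$ force $g$ to vanish outside $R_e\{e_1,e_2\}$. Hence any vertex $v_0$ in the (nonempty) support of $g$ must lie in $R_e\{e_1,e_2\}$ for every pair of distinct edges, so $\{v_0\}$ is an edge resolving set and $\edim(G) = 1$.

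It then remains to show that $\edim(G) = 1$ forces $G = P_n$. Fix $w$ with $\{w\}$ an edge resolving set, and let $L_k = \{v \in V(G) : d(w,v) = k\}$. I would prove by induction on $k$ that $|L_k| \le 1$ for every $k$. For the base case, if $|L_1| \ge 2$ then $w$ has two neighbors and hence two incident edges at distance $0$, contradicting the assumption; since $n \ge 3$ and $G$ is connected, in fact $|L_1| = 1$. For the inductive step, if $L_k = \{w_k\}$, then every $y \in L_{k+1}$ has its unique neighbor in $L_k$ equal to $w_k$, so the edges $w_ky$ for $y \in L_{k+1}$ are all at distance $k$ from $w$; having two or more such edges violates the edge resolving property, so $|L_{k+1}| \le 1$. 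Since each layer has at most one vertex, no horizontal edges within a layer can exist, and connectedness forces $G$ to be the path $P_n$. The main subtlety is the fractional-to-integer support reduction in the second paragraph; once that is in place, the BFS layer argument handles the integer case cleanly.
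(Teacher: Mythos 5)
Your proof is correct, and its core step---showing that an optimal edge resolving function of total weight $1$ must vanish outside every set $R_e\{e_1,e_2\}$, so that its (nonempty) support lies in $\bigcap_{e_1,e_2} R_e\{e_1,e_2\}$---is exactly the paper's argument, phrased via the support rather than via the sets $A'$ and $B'$. The only real difference is that you go on to prove that $\edim(G)=1$ forces $G=P_n$ by the BFS-layer argument, whereas the paper simply asserts the equivalent fact that $\bigcap_{e_1,e_2}R_e\{e_1,e_2\}\neq\emptyset$ if and only if $G$ is a path (a known result on edge dimension), so your version is more self-contained but not a genuinely different route.
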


\begin{proof}
$(\Leftarrow)$ Let $P_n$ be given by $u_1, u_2, \ldots, u_n$, where $n\ge 3$, and let $g: V(P_n) \rightarrow [0,1]$ be a function defined by $g(u_1)=\frac{1}{2}=g(u_n)$ and $g(v)=0$ for each $v\in V(P_n)-\{u_1, u_n\}$. For any distinct edges $e_1, e_2 \in E(P_n)$, $R_e\{e_1, e_2\} \supseteq \{u_1, u_n\}$; thus, $g(R_e\{e_1, e_2\}) \ge g(u_1)+g(u_n)=1$. So, $g$ is an edge resolving function of $P_n$ with $g(V(P_n))=1$, and thus $\edim_f(P_n) \le 1$. By Proposition~\ref{edim_frac_bounds}, $\edim_f(P_n)=1$.

($\Rightarrow$) Let $\mathcal{C}=\displaystyle\bigcap_{e_1, e_2 \in E(G)} R_e\{e_1, e_2\}$, where the intersection is taken over all pairs of distinct edges of $G$; notice that $\mathcal{C} \neq \emptyset$ if and only if $G=P_n$ for some $n$. Let $\edim_f(G)=1$. Assume, to the contrary, that $G\neq P_n$ for any $n$. Let $e_1, e_2, e_3,e_4\in E(G)$ such that $e_1 \neq e_2$ and $e_3\neq e_4$. Let $A=R_e\{e_1, e_2\}$, $B=R_e\{e_3, e_4\}$, $A'=A-(A\cap B)$, $B'=B-(A\cap B)$ and let $g:V(G)\rightarrow [0,1]$ be an edge resolving function of $G$ with $g(V(G))=1$. Since $g$ is an edge resolving function of $G$, we have the following: 
\begin{equation}\label{eq1}
g(A')+g(A\cap B) \ge 1 \mbox{ and } g(B')+g(A\cap B)\ge 1 \tag{$\star$}.
\end{equation}
By the assumption that $\edim_f(G)=1$, we have 
\begin{equation}\label{eq2}
g(A')+g(A\cap B)+g(B')=1 \tag{$\dagger$}.
\end{equation}  
Now, from~(\ref{eq1}) and~(\ref{eq2}), $g(A')=0=g(B')$. Since $e_1, e_2, e_3, e_4$ are arbitrary, $g$ is zero except on $\mathcal{C}$. Since $G\neq P_n$, $\mathcal{C}=\emptyset$ and $g(V(G))=0$; this contradicts the assumption that $g(V(G))=1$.~\hfill
\end{proof}

Next, we show that, for any connected graph $G$ of order at least three, $\dim_f(G)=\frac{|V(G)|}{2}$ implies $\edim_f(G)=\frac{|V(G)|}{2}$, but not vice versa. 

\begin{theorem}\label{edim_frac_n2}
Let $G$ be a connected graph of order $n\ge 3$.
\begin{itemize}
\item[(a)] If $\dim_f(G)=\frac{n}{2}$, then $\edim_f(G)=\frac{n}{2}$.
\item[(b)] There exists a graph $G$ satisfying $\edim_f(G)=\frac{n}{2}>\dim_f(G)$. Moreover, there exists a family of graphs $G$ with $\edim_f(G)=\frac{n}{2}>\dim_f(G)$ such that $\frac{\edim_f(G)}{\dim_f(G)}$ can be arbitrarily large.
\end{itemize}
\end{theorem}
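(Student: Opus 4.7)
The plan for part (a) is to use the characterization in Theorem~\ref{fracdim_bounds}: if $\dim_f(G) = n/2$, there is a bijection $\phi : V(G) \to V(G)$ with $\phi(v) \neq v$ and $|R_v\{v, \phi(v)\}| = 2$ for every $v$. First I would observe that $|R_v\{v, \phi(v)\}| = 2$ forces $R_v\{v, \phi(v)\} = \{v, \phi(v)\}$, so every $w \notin \{v, \phi(v)\}$ satisfies $d(w, v) = d(w, \phi(v))$; in particular $N(v) - \{\phi(v)\} = N(\phi(v)) - \{v\}$, which makes $v$ and $\phi(v)$ twins. Then Observation~\ref{obs_twin}(d) yields $h(v) + h(\phi(v)) \geq 1$ for every edge resolving function $h$ and every $v \in V(G)$. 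Summing over $v$ and using that $\phi$ is a bijection (so $\sum_v h(\phi(v)) = \sum_v h(v)$) gives $2\,h(V(G)) \geq n$, hence $h(V(G)) \geq n/2$; combined with the upper bound from Proposition~\ref{edim_frac_bounds} this forces $\edim_f(G) = n/2$.

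For the single-graph portion of part (b), my plan is to take $G = K_{1,2,2}$ with parts $\{a\}, \{b_1, b_2\}, \{c_1, c_2\}$ and $n = 5$. Since $\{b_1, b_2\}$ and $\{c_1, c_2\}$ are twin classes, Observation~\ref{obs_twin}(c) gives $\dim_f(G) \geq 2$, and setting $g(b_i) = g(c_j) = 1/2$ and $g(a) = 0$ yields a resolving function of weight $2$ (a quick case analysis on $R_v\{x, y\}$ verifies this), so $\dim_f(G) = 2 < 5/2$. To show $\edim_f(G) = 5/2$, I would compute $R_e\{e_1, e_2\}$ for the pairs $\{ab_i, ab_j\}$, $\{ac_i, ac_j\}$, $\{ab_i, ac_j\}$, $\{b_i c_j, b_i c_k\}$, $\{b_i c_k, b_j c_k\}$, and $\{ab_i, b_i c_j\}$, and verify that every $2$-subset of $V(G)$ arises as some $R_e\{e_1, e_2\}$; summing the resulting $10$ pairwise constraints $h(x) + h(y) \geq 1$ gives $4\,h(V(G)) \geq 10$, so $h(V(G)) \geq 5/2$, which combined with Proposition~\ref{edim_frac_bounds} gives equality.

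For the unbounded-ratio portion, my plan is to exhibit a family $(G_m)_{m \geq 1}$ with $\edim_f(G_m) = |V(G_m)|/2$ but $\dim_f(G_m) = o(|V(G_m)|)$. The idea is to arrange that, on the edge side, every $2$-subset of $V(G_m)$ is realized as $R_e\{e_1, e_2\}$ for some pair of edges---so that the pairwise-summation argument above forces $\edim_f(G_m) = |V(G_m)|/2$---while on the vertex side the distances along a sparse global skeleton (of the sort studied for wheels or grids in Section~\ref{sec_graphs}) admit a small resolving function. The main obstacle, and the step I expect to be most delicate, is reconciling these two requirements: plenty of edge-twin pairs tend, via Observation~\ref{obs_twin}, to produce many vertex twins as well, and those would inflate $\dim_f$. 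I would address this by separating an edge-resolving hub (joined symmetrically to many small gadgets that cover every $2$-subset of vertices via $R_e$) from a long asymmetric vertex-resolving skeleton that distinguishes the hub and gadgets using only $O(1)$ or $O(\log |V(G_m)|)$ anchor vertices, yielding $\edim_f(G_m)/\dim_f(G_m) \to \infty$.
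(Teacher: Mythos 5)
Parts (a) and the single-graph portion of (b) are essentially sound. For (a) you route through Theorem~\ref{fracdim_bounds} rather than Theorem~\ref{n_2}: from $|R_v\{v,\phi(v)\}|=2$ you correctly deduce $R_v\{v,\phi(v)\}=\{v,\phi(v)\}$, hence that $v$ and $\phi(v)$ are twins, and then Observation~\ref{obs_twin}(d) plus one summation over the bijection gives $2h(V(G))\ge n$. This is a legitimate and arguably cleaner alternative to the paper's argument, which invokes the explicit characterization $G\in H[\mathcal{K}\cup\overline{\mathcal{K}}]$ and sums over $\binom{|Q_i|}{2}$ inequalities within each twin class; your version avoids the case split on $G=K_n$ versus $G\neq K_n$. (One small point worth making explicit: Observation~\ref{obs_twin}(d) rests on the existence of a common neighbor $z\in N(x)\cap N(y)$, which does hold for distinct twins in any connected graph of order at least three.) Your choice of $K_{1,2,2}$ for the first claim of (b) is an instance of the paper's own example (complete multipartite with exactly one singleton part), and your verification that all ten $2$-subsets of $V(K_{1,2,2})$ occur as sets $R_e\{e_1,e_2\}$ checks out, yielding $4h(V)\ge 10$ and $\edim_f=5/2>2=\dim_f$.

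The unbounded-ratio portion of (b), however, is a genuine gap: you describe the properties a family should have (every $2$-subset of vertices realized as some $R_e\{e_1,e_2\}$, yet a sparse vertex-resolving ``skeleton'') and even flag the tension between them, but you never produce a construction, so nothing is proved. This is the substantive content of part (b). The paper resolves exactly the tension you identify with a concrete family $G_k$ on $V=A\cup B$, $|A|=k$, $|B|=2^k$, where $A$ and $B$ each induce cliques and $b_S$ is adjacent to $a_i$ iff $i\in S$. On the edge side, pairs of edges sharing an endpoint in the clique $B$ (e.g.\ $R_e\{b_Xb_Y,b_Xb_Z\}=\{b_Y,b_Z\}$) and analogous pairs in $A$ generate enough two-element constraints to force $\edim_f(G_k)=\frac{k+2^k}{2}$ --- note that the exponentially many vertices of $B$ are \emph{not} vertex twins, since distinct $b_S$ have distinct neighborhoods in $A$, which is precisely how the construction evades the obstacle you worry about. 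On the vertex side, $A$ is a resolving set of size $k$, and a matching lower bound $\dim_f(G_k)\ge k$ comes from the sets $R_v\{b_{\{i\}},b_{\{i,i+1\}}\}=\{b_{\{i\}},b_{\{i,i+1\}},a_{i+1}\}$. This gives $\frac{\edim_f(G_k)}{\dim_f(G_k)}=\frac{k+2^k}{2k}\to\infty$. Without such an explicit family and its verification, your argument for this claim is incomplete.
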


\begin{proof}
Let $G$ be a connected graph of order $n\ge 3$.

\medskip

(a) Suppose $\dim_f(G)=\frac{n}{2}$. By Theorem~\ref{n_2}, $G\in H[\mathcal{K} \cup \overline{\mathcal{K}}]$ for some connected graph $H$. Let $g: V(G)\rightarrow[0,1]$ be any edge resolving function of $G$; we show that $g(V(G))\ge \frac{n}{2}$. First, let $G=K_n$ with $V(G)=\{u_1, u_2, \ldots, u_n\}$. By Observation~\ref{obs_twin}(d), $g(u_i)+g(u_j)\ge1$ for any distinct $i,j\in\{1,2,\ldots, n\}$. By summing over the ${n \choose 2}$ inequalities, we have $(n-1) \sum_{i=1}^{n} g(u_i) \ge {n \choose 2}$, and thus $g(V(G))=\sum_{i=1}^{n}g(u_i)\ge \frac{n}{2}$. Second, let $G\neq K_n$, and let $\mathcal{C}=\{Q_1, Q_2, \ldots, Q_t\}$ be the collection of twin equivalence classes of $G$, where $t\ge2$; then, by Theorem~\ref{n_2}, $n=\sum_{i=1}^{t}|Q_i|$ and $|Q_i|\ge 2$ for each $i\in\{1,2,\ldots, t\}$. Again, by Observation~\ref{obs_twin}(d), $g(Q_i) \ge \frac{|Q_i|}{2}$ for each $i\in \{1,2, \ldots, t\}$. By summing over the $t$ inequalities, we have $g(V(G))=\sum_{i=1}^{t}g(Q_i)\ge \sum_{i=1}^{t} \frac{|Q_i|}{2}=\frac{n}{2}$. So, in each case, $g(V(G))\ge \frac{n}{2}$, and thus $\edim_f(G)\ge \frac{n}{2}$. Since $\edim_f(G)\le \frac{n}{2}$ by Proposition~\ref{edim_frac_bounds}, $\edim_f(G)=\frac{n}{2}$.

\medskip 

(b) If $G=K_{a_1, a_2, \ldots, a_k}$ is a complete $k$-partite graph of order $n=\sum_{i=1}^{k}a_i\ge5$ such that $k\ge 3$ and $a_i=1$ for exactly one $i\in\{1,2,\ldots, k\}$, then $\edim_f(G)=\frac{n}{2}>\frac{n-1}{2}=\dim_f(G)$ (see Theorem~\ref{dim_frac_graph}(e) and Proposition~\ref{edim_frac_kpartite}). 

\medskip

Next, we show the existence of graphs $G$ such that $\edim_f(G)=\frac{n}{2}$ and $\frac{\edim_f(G)}{\dim_f(G)}$ can be arbitrarily large. We recall the family of graphs $G$ constructed in~\cite{nina} to show that $\frac{\edim(G)}{\dim(G)}$ can be arbitrarily large. For each integer $k\ge 3$ and the set $X=\{0,1,\ldots,k-1\}$, let $G_k$ be a graph of order $k+2^k$ such that $V(G_k)=A \cup B$, where $|A|=k$ and $|B|=2^k$. Let $A=\{a_0, a_1, \ldots, a_{k-1}\}$ and $B=\{b_S:S \subseteq X\}$. The edge set is specified as follows: (i) each of the sets $A$ and $B$ induces a clique in $G_k$; (ii) indexing the elements of $B$ by subsets of $X$, let $b_S\in B$ be adjacent to $a_i\in A$ if $i\in S$ for $S \subseteq X$; (iii) there are no other edges. See Figure~\ref{fig_edim_big} for the graph $G_3$. Let $k\ge 3$ be an integer.

First, we show that $\dim_f(G_k) =k$. Let $g:V(G_k) \rightarrow [0,1]$ be any resolving function of $G_k$. Note that, for each $i\in\{0,1,\ldots, k-1\}$, $R_v\{b_{\{i\}}, b_{\{i, i+1\}}\}=\{b_{\{i\}}, b_{\{i, i+1\}}, a_{i+1}\}$ and $g(b_{\{i\}})+g(b_{\{i,i+1\}})+g(a_{i+1})\ge 1$, where the subscript is taken modulo $k$. By summing over the $k$ inequalities, we have $g(A)+\sum_{i=0}^{k-1}(g(b_{\{i\}})+g(b_{\{i, i+1\}}))\ge k$, where the subscript is taken modulo $k$. So, $g(V(G_k)) \ge k$, and hence $\dim_f(G_k)\ge k$. On the other hand, $\dim_f(G_k) \le \dim(G_k)=k$ by Observation~\ref{obs_frac}(a) and the fact that $\dim(G_k)=k$ as shown in~\cite{nina}.

Second, we show that $\edim_f(G_k)=\frac{k+2^k}{2}$. Since $\edim_f(G_k) \le \frac{k+2^k}{2}=\frac{|V(G_k)|}{2}$ by Poposition~\ref{edim_frac_bounds}, it suffices to show that $\edim_f(G_k) \ge \frac{k+2^k}{2}$. Let $h:V(G_k) \rightarrow [0,1]$ be any edge resolving function of $G_k$. Note that, for each $i\in\{0,1,\ldots, k-1\}$, $R_e\{b_{\{i, i+1\}}a_i, b_{\{i, i+1\}}a_{i+1}\}=\{a_i, a_{i+1}\}$ and $h(a_i)+h(a_{i+1})\ge 1$, where the subscript is taken modulo $k$. By summing over the $k$ inequalities, we have $2h(A)\ge k$, i.e., $h(A) \ge \frac{k}{2}$ ($\blacktriangle$). Since $R_e\{a_0b_{\{0\}}, a_0b_X\}=\{b_{\{0\}}, b_X\}$, we have $h(b_{\{0\}})+h(b_X)\ge 1$ ($\blacktriangleleft$). We also note that, for any distinct $Y,Z \subsetneq X$ with $Y\neq\{0\}$ and $Z \neq\{0\}$, $R_e\{b_Xb_Y, b_X b_Z\}=\{b_Y, b_Z\}$ and $h(b_Y)+h(b_Z)\ge 1$. By summing over the ${2^k-2 \choose 2}$ inequalities, we have $(2^k-3)[(\sum_{S\subseteq X} h(b_S))-(h(b_{\{0\}})+h(b_X))] \ge {2^k-2 \choose 2}$, i.e., $(\sum_{S\subseteq X} h(b_S))-(h(b_{\{0\}})+h(b_X)) \ge \frac{2^{k}-2}{2}$ ($\blacktriangleright$). By summing over the three inequalities ($\blacktriangle$), ($\blacktriangleleft$) and ($\blacktriangleright$), we have $h(V(G_k))=h(A)+h(B) \ge\frac{k}{2}+\frac{2^k}{2}$; thus, $\edim_f(G_k)\ge \frac{k+2^k}{2}$. 

Therefore, $\edim_f(G_k)=\frac{|V(G_k)|}{2}$ and $\frac{\edim_f(G_k)}{\dim_f(G_k)}=\frac{k+2^k}{2k} \rightarrow \infty$ as $k \rightarrow \infty$.~\hfill
\end{proof}

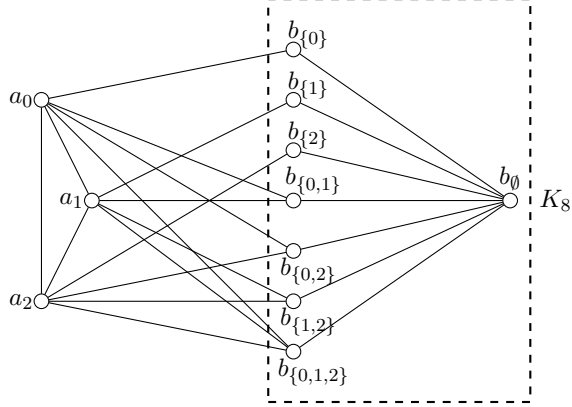
\begin{figure}[ht]
\centering
\begin{tikzpicture}[scale=.67, transform shape]

\node [draw, shape=circle, scale=.8] (a1) at  (-1, 5) {};
\node [draw, shape=circle, scale=.8] (a2) at  (0, 3) {};
\node [draw, shape=circle, scale=.8] (a3) at  (-1, 1) {};

\node [draw, shape=circle, scale=.8] (b1) at  (4, 6) {};
\node [draw, shape=circle, scale=.8] (b2) at  (4, 5) {};
\node [draw, shape=circle, scale=.8] (b3) at  (4, 4) {};
\node [draw, shape=circle, scale=.8] (b4) at  (4, 3) {};
\node [draw, shape=circle, scale=.8] (b5) at  (4, 2) {};
\node [draw, shape=circle, scale=.8] (b6) at  (4, 1) {};
\node [draw, shape=circle, scale=.8] (b7) at  (4, 0) {};

\node [draw, shape=circle, scale=.8] (c) at  (8.3, 3) {};

\node [scale=1.3] at (-1.4,5) {$a_0$};
\node [scale=1.3] at (-0.4,3) {$a_1$};
\node [scale=1.3] at (-1.4,1) {$a_2$};

\node [scale=1.3] at (4.3, 6.3) {$b_{\{0\}}$};
\node [scale=1.3] at (4.3, 5.3) {$b_{\{1\}}$};
\node [scale=1.3] at (4.3,4.3) {$b_{\{2\}}$};
\node [scale=1.3] at (4.4, 3.4) {$b_{\{0,1\}}$};
\node [scale=1.3] at (4.3, 1.6) {$b_{\{0,2\}}$};
\node [scale=1.3] at (4.3,0.6) {$b_{\{1,2\}}$};
\node [scale=1.3] at (4.4,-0.4) {$b_{\{0,1,2\}}$};
\node [scale=1.3] at (8.3,3.45) {$b_{\emptyset}$};

\node [scale=1.3] at (9.2,3) {$K_8$};

\draw(a1)--(a2)--(a3)--(a1);
\draw(b1)--(c);\draw(b2)--(c);\draw(b3)--(c);\draw(b4)--(c);\draw(b5)--(c);\draw(b6)--(c);\draw(b7)--(c);
\draw(b1)--(a1)--(b4);\draw(b5)--(a1)--(b7);\draw(b2)--(a2);\draw(a2)--(b4);\draw(b6)--(a2)--(b7);\draw(b3)--(a3)--(b5);\draw(b6)--(a3)--(b7);

\draw[thick, dashed] (3.5,-1) rectangle (8.7,7); 

\end{tikzpicture}
\caption{\small The graph $G_3$.}\label{fig_edim_big}
\end{figure}
 
\begin{question}
Is there a connected graph $G$ such that $\frac{\dim_f(G)}{\edim_f(G)}$ can be arbitrarily large?
\end{question}

\begin{question}
Can we characterize connected graphs $G$ for which $\edim_f(G)=\frac{|V(G)|}{2}$ hold?
\end{question}


\section{The fractional edge dimension of some graph classes}\label{sec_graphs}

In this section, we determine $\edim_f(G)$ when $G$ is a tree, a cycle, the Petersen graph, a wheel graph, a complete multi-partite graph and a grid graph (also known as the Cartesian product of two paths), respectively. 

We recall some terminology. Fix a tree $T$. A leaf $\ell$ is called a \emph{terminal vertex} of a major vertex $v$ if $d(\ell,v) < d(\ell,w)$ for every other major vertex $w$ in $T$. The terminal degree, $ter(v)$, of a major vertex $v$ is the number of terminal vertices of $v$ in $T$, and an \emph{exterior major vertex} is a major vertex that has positive terminal degree. Let $M(T)$ be the set of exterior major vertices of $T$. Let $M_1(T)=\{w\in M(T): ter(w)=1\}$ and $M_2(T)=\{w\in M(T): ter(w) \ge 2\}$; then $M(T)=M_1(T) \cup M_2(T)$. Let $\sigma(T)$ denote the number of leaves of $T$, $ex(T)$ the number of exterior major vertices of $T$, and let $ex_1(T)=|M_1(T)|$. For each $v\in M(T)$, let $T_v$ be the subtree of $T$ induced by $v$ and all vertices belonging to the paths joining $v$ with its terminal vertices, and let $L(v)$ be the set of terminal vertices of $v$ in $T$.

First, we recall the fractional metric dimension of some graph classes.

\begin{theorem}\label{dim_frac_graph} 
\begin{itemize}
\item[(a)] \emph{\cite{yi}} For any non-trivial tree $T$, $\dim_f (T)=\frac{1}{2}(\sigma(T)-ex_1(T))$.
\item[(b)] \emph{\cite{fdim}} For $n\ge 3$, $\dim_f(C_n)=\left\{
\begin{array}{ll}
\frac{n}{n-1} & \mbox{ if $n$ is odd},\\
\frac{n}{n-2} & \mbox{ if $n$ is even}. 
\end{array}\right.$
\item[(c)] \emph{\cite{fdim}} For the Petersen graph $\mathcal{P}$, $\dim_f(\mathcal{P})=\frac{5}{3}$.
\item[(d)] \emph{\cite{fdim}} For the wheel graph $W_n=K_1+C_{n-1}$ of order $n\ge 4$, $\dim_f(W_n)=\left\{
\begin{array}{ll}
2 & \mbox{ if } n\in\{4,5\},\\
\frac{3}{2} & \mbox{ if } n=6,\\
\frac{n-1}{4} & \mbox{ if } n \ge 7. 
\end{array}\right.$
\item[(e)] \emph{\cite{yi}} For $k\ge2$, let $G=K_{a_1, a_2, \ldots, a_k}$ be a complete $k$-partite graph of order $n=\sum_{i=1}^{k}a_i$, and let $s$ be the number of partite sets of $G$ consisting of exactly one element. Then 
\begin{equation*}
\dim_f(G)=\left\{
\begin{array}{ll}
\frac{n-1}{2} & \mbox{ if } s=1,\\
\frac{n}{2} & \mbox{ otherwise}. 
\end{array}\right.
\end{equation*}
\item[(f)] \emph{\cite{fdim}} For $s,t\ge 2$, $\dim_f(P_s \square P_t)=2$, where $P_s \square P_t$ denotes the Cartesian product of $P_s$ and $P_t$.
\end{itemize}
\end{theorem}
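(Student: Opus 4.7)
The plan is to cast each part of Theorem~\ref{dim_frac_graph} as a fractional LP: $\dim_f(G)$ equals the minimum of $\sum_{v\in V(G)} g(v)$ over all $g:V(G)\to[0,1]$ subject to $g(R_v\{x,y\})\geq 1$ for every pair of distinct vertices. Upper bounds will be produced by exhibiting explicit resolving functions (often symmetric under the automorphism group), while lower bounds come from isolating pairs $(x,y)$ with small resolving neighborhoods $R_v\{x,y\}$ and summing the corresponding constraints with suitable multiplicities.

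For part~(a), I would partition the leaves of $T$ by their nearest exterior major vertex. Two leaves $\ell_1,\ell_2\in L(v)$ with $v\in M_2(T)$ act as twins relative to everything outside the subtree $T_v$: more precisely, $R_v\{\ell_1,\ell_2\}$ is contained in $V(T_v)$, and the twin-pair lower bound of Observation~\ref{obs_twin}(c), applied leafwise, forces $g(L(v))\geq |L(v)|/2$. Summing over $v\in M_2(T)$ yields the lower bound $\tfrac{1}{2}(\sigma(T)-ex_1(T))$, matched by the function assigning $\tfrac{1}{2}$ to each leaf in $\bigcup_{v\in M_2(T)} L(v)$ and $0$ elsewhere. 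Part~(e) follows the same template: every partite set of size at least two is a twin equivalence class, so Observation~\ref{obs_twin}(c) forces half weight on each, while a lone singleton partite vertex is resolved ``for free'' by the other classes, explaining the drop from $n/2$ to $(n-1)/2$ when exactly one singleton class exists.

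The remaining parts exploit symmetry. For cycles~(b), I would tabulate $|R_v\{x,y\}|$ by parity and distance: when $n$ is odd every pair gives $|R_v|=n-1$ and the uniform weighting $g\equiv 1/(n-1)$ is optimal; when $n$ is even, antipodal pairs give the smaller value $|R_v|=n-2$, forcing $n/(n-2)$. For the Petersen graph~(c), a direct computation using girth five and degree three shows that $|R_v\{x,y\}|=6$ for every pair of vertices, so the constant function $g\equiv 1/6$ attains $\dim_f(\mathcal{P})=5/3$. For wheels~(d), I would split cases on $n$: short cycles let the hub interact atypically with the rim, whereas for $n\geq 7$ pairs of consecutive rim vertices have $|R_v|=4$, driving the $(n-1)/4$ bound. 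For grids~(f), two well-chosen corners form a resolving set of size two and hence give the upper bound via Observation~\ref{obs_frac}(a); the matching lower bound requires exhibiting a family of vertex pairs whose resolving neighborhoods together force weight at least $2$. The main obstacle throughout is pinning down the sharpest family of small resolving neighborhoods so that the summed constraints exactly match the upper bound; this demands genuine case analysis rather than a single symmetry argument for the Petersen and wheel cases.
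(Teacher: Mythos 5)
This theorem is not proved in the paper at all: it is a list of known results recalled with citations to \cite{yi} and \cite{fdim}, so there is no in-paper proof to compare against. Judged on its own merits, your LP framework and most of the upper-bound constructions are sound, but two of your lower-bound arguments contain genuine errors. For part (a), the claim that Observation~\ref{obs_twin}(c) applies ``leafwise'' to force $g(L(v))\ge |L(v)|/2$ is false: two terminal leaves of $v\in M_2(T)$ are twins only when both legs have length one. If the legs are longer, the leaves are not twins, and a resolving function can carry all of its weight on interior vertices of the legs --- for a spider with three legs of length two, putting $\tfrac12$ on each neighbor of the center is a resolving function with $g(L(v))=0$. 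The correct constraint, for legs $P^i$ and $P^j$ at $v$, is $R_v\{s_i,s_j\}\subseteq V(P^i)\cup V(P^j)$ (where $s_i,s_j$ are the neighbors of $v$ on the legs), which after summing over the $\binom{ter(v)}{2}$ pairs yields only $g(V(T_v))\ge ter(v)/2$; this is still enough to prove (a), but it is the weight of the whole subtree $T_v$, not of its leaf set, that is bounded below. This is exactly the structure the paper uses in its edge analogue (Proposition~\ref{edim_frac_tree}).

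For part (b) with $n$ even, you have misidentified the tight pairs: an antipodal pair in $C_n$ is resolved by \emph{every} vertex (including $x$ and $y$ themselves), so $|R_v\{x,y\}|=n$ there. The pairs with $|R_v\{x,y\}|=n-2$ are those at even distance less than $n/2$ --- most simply $\{u_i,u_{i+2}\}$, whose resolving set omits the two arc-midpoints $u_{i+1}$ and $u_{i+1+n/2}$; summing these $n$ constraints gives $(n-2)g(V)\ge n$. The remaining parts are essentially right in outline: (c) needs the averaging-over-automorphisms step made explicit to convert ``all $|R_v|=6$'' into the lower bound $5/3$ (constancy of an optimal $g$ under vertex-transitivity), (d) is correct for $n\ge 7$ via the consecutive-rim-vertex pairs with $|R_v|=4$, (e) is a clean twin-class argument, and (f) is left with an acknowledged hole on the lower bound (the paper's proof of the edge version supplies the template: four constraints coming from the four corners, each covering the boundary twice).
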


Next, we determine $\edim_f(T)$ for a tree $T$; we show that $\edim_f(T)=\dim_f(T)$. We begin with the following useful lemma, which can be obtained from Lemma 18 of~\cite{frac_kdim} by adjusting the statement and its proof for two distinct edges and edge resolving sets. 

\begin{lemma}\label{lemma_tree2}
Let $T$ be a tree with $ex(T) \ge 2$. For $w\in M_2(T)$, let $e_1\in E(T_w)$ and $e_2\in E(T)-E(T_w)$. Then either $R_e\{e_1,e_2\} \supseteq V(T_w)$ or $R_e\{e_1,e_2\} \supseteq V(T_{w'})$ for some $w'\in M_2(T)-\{w\}$. 
\end{lemma}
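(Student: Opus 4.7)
The plan is to analyze edge distances from $V(T_w)$ and from $V(T_{w'})$ (for a candidate $w'\in M_2(T)-\{w\}$) directly, exploiting the fact that in a tree, every path from a vertex inside a subtree to a vertex outside must exit through the subtree's attachment point.

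First I would determine exactly when $V(T_w)\subseteq R_e\{e_1,e_2\}$. Write $e_1=ab$ with $d(w,a)<d(w,b)$, set $p=d(w,e_1)=d(w,a)$ and $q=d(w,e_2)$, and note that $e_1$ lies on a unique leg of $T_w$. Since $e_2\in E(T)-E(T_w)$, any path from $v\in V(T_w)$ to an endpoint of $e_2$ must leave $T_w$ through $w$, so $d(v,e_2)=d(v,w)+q$. The quantity $d(v,e_1)$ splits into three sub-cases according to whether $v$ lies between $w$ and $a$ on the leg of $e_1$, beyond $b$ on the same leg, or on a different leg of $T_w$; equating $d(v,e_1)=d(v,e_2)$ in each sub-case yields the clean criterion
\[
V(T_w)\subseteq R_e\{e_1,e_2\}\iff p<q\ \mbox{or}\ \bigl(p>q\ \mbox{with}\ p-q\ \mbox{odd}\bigr).
\]
If this holds, the first alternative of the lemma is established; otherwise $p\ge q$ with $p-q$ even.

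In the failing case, for any candidate $w'\in M_2(T)-\{w\}$ and any $v\in V(T_{w'})$, the subtrees $T_w$ and $T_{w'}$ share no edges, so $d(v,e_1)=d(v,w')+d(w',e_1)$; if moreover $e_2\notin E(T_{w'})$, then $d(v,e_2)=d(v,w')+d(w',e_2)$ as well, and $V(T_{w'})\subseteq R_e\{e_1,e_2\}$ collapses to the single inequality $d(w',e_1)\neq d(w',e_2)$, while if $e_2\in E(T_{w'})$ the three-sub-case analysis of the previous step with the roles of $e_1$ and $e_2$ swapped gives a parallel distinguishing criterion. To choose $w'$ I would split on the position of $e_2$: if $e_2\in E(T_u)$ for some $u\in M_2(T)-\{w\}$, take $w'=u$, and writing $m=d(w,w')$ and $p''=d(w',e_2)$, the identity $q=m+p''$ combined with $d(w',e_1)=m+p$ and $p\ge q$ gives $d(w',e_1)-d(w',e_2)=2m+(p-q)>0$, verifying the swapped criterion. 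Otherwise $e_2$ lies on the leg of some $u\in M_1(T)$ or on a ``skeleton'' edge between two major vertices, and invoking the structural fact (derivable from $ex(T)\ge 2$) that every non-leg branch of $w$ in $T$ contains at least one vertex of $M_2(T)-\{w\}$, I pick $w'$ in the non-leg branch of $w$ containing $e_2$, placing $w'$ beyond $e_2$ in the tree whenever possible, and verify $d(w',e_1)\neq d(w',e_2)$ by a direct computation in each geometric configuration.

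The main obstacle will be the combination $p=q$ with $w$ having only one non-leg branch, forcing every candidate $w'\in M_2(T)-\{w\}$ to share that branch with $e_2$; here the argument hinges on placing $w'$ on the far side of $e_2$ in the tree so that the $w'$-to-$e_2$ path does not retrace through $w$, together with a careful sub-split on whether $e_2$ is itself a skeleton edge or lies on a leg of some $M_1(T)$ vertex, each of which requires its own explicit distance calculation.
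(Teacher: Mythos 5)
The paper never actually proves this lemma: it only remarks that the statement ``can be obtained from Lemma 18 of~\cite{frac_kdim} by adjusting the statement and its proof.'' Your argument is therefore a genuinely self-contained alternative, and its skeleton is sound. The parity criterion $V(T_w)\subseteq R_e\{e_1,e_2\}\iff p<q$ or ($p>q$ with $p-q$ odd) is correct: since only $w$ has neighbours outside $T_w$, every $v\in V(T_w)$ satisfies $d(v,e_2)=d(v,w)+q$, and the non-resolving vertices are exactly those on the non-$e_1$ legs when $p=q$ and the vertex at distance $(p-q)/2$ from $w$ on the $e_1$-leg when $p\ge q$ with $p-q$ even. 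The Case A computation $d(w',e_1)-d(w',e_2)=2m+(p-q)\ge 2m>0$ is right, and the structural fact you invoke is true and easy to justify: in any non-leg branch $B$ of $w$ the major vertex of $B$ farthest from $w$ has all of its away-from-$w$ branches free of major vertices, hence terminal degree at least two, so $M_2(T)\cap B\neq\emptyset$.

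The one step you must repair is the handling of the hardest case. Placing $w'$ ``on the far side of $e_2$'' is neither always possible — the component of $T$ beyond $e_2$ may contain no major vertex at all, e.g.\ when $e_2$ lies on the single leg of some $u\in M_1(T)$ between $u$ and its terminal leaf — nor is it what the argument actually needs. The property that does the work is merely that $w'$ and $e_2$ lie in the same component $B$ of $T-w$. Writing $e_2=cd$ with $d(w,d)=q+1$, the geodesics from $w$ to $w'$ and from $w$ to $d$ both begin with the edge of $w$ into $B$, so $d(w',d)\le d(w',w)+(q+1)-2=d(w',w)+q-1$, hence $d(w',e_2)<d(w',w)+q\le d(w',w)+p=d(w',e_1)$; since $e_2\notin E(T_{w'})$, every $v\in V(T_{w'})$ inherits this strict inequality after adding $d(v,w')$ to both sides. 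With that substitution (any $w'\in M_2(T)\cap B$ works, whether or not it sits beyond $e_2$), your proof closes in all configurations.
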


\begin{proposition}\label{edim_frac_tree}
For any tree $T$ of order at least three, $\edim_f(T)=\frac{1}{2}(\sigma(T)-ex_1(T))$.
\end{proposition}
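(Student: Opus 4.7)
The plan is to prove matching bounds $\edim_f(T) \ge \tfrac{1}{2}(\sigma(T)-ex_1(T))$ and $\edim_f(T) \le \tfrac{1}{2}(\sigma(T)-ex_1(T))$. If $T=P_n$, then $\sigma(T)=2$ and $ex_1(T)=0$, so the right-hand side is $1$, which is supplied by Proposition~\ref{edim_frac_1}. Otherwise $T$ has a major vertex, and a short argument on the auxiliary tree spanned by the major vertices of $T$ shows $M_2(T)\neq\emptyset$: any ``leaf'' in that auxiliary tree has all but one of its $T$-branches running to leaves of $T$ without crossing another major vertex, giving terminal degree at least $2$.

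For the lower bound, let $h$ be an arbitrary edge resolving function. Fix $w \in M_2(T)$ with $k = ter(w) \ge 2$ and terminals $\ell_1, \ldots, \ell_k$, write the path from $w$ to $\ell_i$ as $P_i: w=v_i^0, v_i^1, \ldots, v_i^{m_i}=\ell_i$, and set $f_i = v_i^0 v_i^1$. A direct distance computation gives $R_e\{f_i, f_j\} = (V(P_i) \cup V(P_j)) - \{w\}$ whenever $i \ne j$, since every vertex outside $T_w$ reaches both $f_i$ and $f_j$ through $w$ at equal distance, while vertices of $P_i - \{w\}$ (respectively $P_j - \{w\}$) see $f_i$ (respectively $f_j$) one unit closer. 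Summing the $\binom{k}{2}$ inequalities $h(R_e\{f_i,f_j\}) \ge 1$ yields $(k-1)\,h(V(T_w)-\{w\}) \ge \binom{k}{2}$, hence $h(V(T_w)-\{w\}) \ge k/2$. Since any path in $T$ between two distinct subtrees $T_w, T_{w'}$ with $w,w' \in M_2(T)$ must exit each through its root, the sets $V(T_w)-\{w\}$ are pairwise disjoint, so summing over $w \in M_2(T)$ produces $h(V(T)) \ge \tfrac{1}{2}\sum_{w \in M_2(T)} ter(w) = \tfrac{1}{2}(\sigma(T)-ex_1(T))$.

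For the upper bound I would define $g: V(T) \to [0,1]$ by $g(\ell) = \tfrac{1}{2}$ if $\ell$ is a terminal vertex of some $w \in M_2(T)$ and $g(v) = 0$ otherwise, giving $g(V(T)) = \tfrac{1}{2}(\sigma(T) - ex_1(T))$. To verify $g$ is an edge resolving function, I would take arbitrary distinct $e_1, e_2 \in E(T)$ and split into three cases according to their positions relative to $\bigcup_{w \in M_2(T)} E(T_w)$: (i) both in the same $E(T_w)$ with $w \in M_2(T)$, where a direct computation (treating same-branch and different-branch sub-cases) places at least two members of $L(w)$ in $R_e\{e_1,e_2\}$; (ii) exactly one in some $E(T_w)$ with $w \in M_2(T)$, handled by Lemma~\ref{lemma_tree2}, which yields $V(T_{w'}) \subseteq R_e\{e_1,e_2\}$ for some $w' \in M_2(T)$; and (iii) neither in any such $E(T_w)$, where I would show that some $w \in M_2(T)$ resolves $\{e_1,e_2\}$ and then invoke the identity $d(v,e_i) = d(v,w) + d(w,e_i)$ valid for $v \in V(T_w) - \{w\}$ to deduce $L(w) \subseteq R_e\{e_1,e_2\}$.

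The main obstacle is case (iii): one must rule out the possibility that two distinct core edges $e_1, e_2$ satisfy $d(w,e_1) = d(w,e_2)$ for every $w \in M_2(T)$. My plan is to contract each $T_w$ with $w \in M_2(T)$ to the single vertex $w$, producing a tree $T^*$ in which $e_1, e_2$ remain distinct and some leaf of $T^*$ resolves them. If that leaf is an $M_2(T)$-vertex we are done immediately; otherwise it is a terminal leaf of some $w_0 \in M_1(T)$, and a short case analysis forces both $e_1$ and $e_2$ to lie on the unique terminal path of $w_0$, so that any $w \in M_2(T)$ distinguishes them via $d(w,e_i) = d(w,w_0) + s_i$ for the distinct path indices $s_1, s_2$ of $e_1, e_2$.
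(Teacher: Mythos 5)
Your proposal follows the paper's proof in all essentials: the same lower bound (the relations $R_e\{f_i,f_j\}=(V(P_i)\cup V(P_j))-\{w\}$ at each $w\in M_2(T)$, summed over the $\binom{k}{2}$ pairs and then over the disjoint subtrees $T_w$), and the same upper bound function (weight $\tfrac12$ on the terminal vertices of vertices in $M_2(T)$), verified through the same three-case split with Lemma~\ref{lemma_tree2} disposing of the mixed case. Those parts are correct.

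The genuine gap is in your case (iii). The claim that, when the resolving leaf of $T^*$ is a terminal leaf $z$ of some $w_0\in M_1(T)$, a short case analysis \emph{forces} both $e_1$ and $e_2$ onto the terminal path of $w_0$ is false. Take $w_0$ of degree $3$ whose terminal path has length $5$ ending at $z$, and whose other two branches are long paths leading to vertices $w_1,w_2\in M_2(T)$; let $e_1$ be the edge at distance $2$ from $w_0$ on the terminal path and $e_2$ the edge at distance $2$ from $w_0$ on the branch toward $w_1$. Then $z$ resolves the pair ($d(z,e_1)=2\neq 7=d(z,e_2)$) although $e_2$ is not on the terminal path of $w_0$; note also that $w_2$ fails to resolve this pair, since $d(w_2,e_1)=d(w_2,w_0)+2=d(w_2,e_2)$, so the required $M_2(T)$-vertex ($w_1$ here) must be chosen with care. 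Thus the sub-cases in which exactly one of $e_1,e_2$, or neither, lies on the terminal path of $w_0$ actually occur and are not covered by your sketch. What must be proved is that any two distinct edges outside $\bigcup_{w\in M_2(T)}E(T_w)$ are resolved by some $w\in M_2(T)$ (whence all of $V(T_w)$, in particular $L(w)$, resolves them). The paper closes this by noting that the core of $T$ (delete all pendant terminal paths) is a subtree whose leaves all lie in $M_2(T)$, so one may choose $w_1,w_2\in M_2(T)$ such that the $w_1$--$w_2$ path contains both $e_1$ and $e_2$ --- or, for an edge lying on the terminal path of some $y\in M_1(T)$, its attachment vertex $y$ --- and then a distance computation along that path shows that if neither $w_1$ nor $w_2$ resolved the pair we would have $e_1=e_2$. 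Your contraction idea can be repaired along the same lines, but as written the case analysis is incomplete.
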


\begin{proof}
Let $T$ be a tree of order at least three. If $ex(T)=0$, then $T$ is a path and $\edim_f(T)=1=\frac{1}{2}(\sigma(T)-ex_1(T))$ by Proposition~\ref{edim_frac_1}. So, suppose $ex(T)\ge 1$; then $M_2(T) \neq \emptyset$. 

\medskip

First, we show that $\edim_f(T) \ge \frac{1}{2}(\sigma(T)-ex_1(T))$. Let $g:V(T)\rightarrow [0,1]$ be any edge resolving function of $T$. Fix $v\in M_2(T)$ with $ter(v)=a \ge2$. Let $N(v)=\{s_1, s_2, \ldots, s_a\}$ and $L(v)=\{\ell_1, \ell_2, \ldots, \ell_a\}$ such that $s_i$ lies on the $v-\ell_i$ path, where $i\in\{1,2,\ldots, a\}$. For each $i\in\{1,2,\ldots, a\}$, let $P^i$ denote the $s_i-\ell_i$ path. Note that, for any distinct $i,j\in\{1,2,\ldots, a\}$, $R_e\{vs_i, vs_j\} =V(P^i) \cup V(P^j)$; thus, $g(V(P^i))+g(V(P^j)) \ge 1$. By summing over the ${a \choose 2}$ inequalities, we have $(a-1) \sum_{i=1}^{a}g(V(P^i))\ge {a \choose 2}$, which implies $g(V(T_v)) \ge \sum_{i=1}^{a}g(V(P^i)) \ge \frac{a}{2}$. So, 
$$g(V(T)) \ge \!\!\!\sum_{v\in M_2(T)} \!\!\!g(V(T_v)) \ge\! \!\!\sum_{v\in M_2(T)} \!\!\!\frac{ter(v)}{2}=\frac{1}{2}\left(\sum_{v\in M(T)} \!\!\!ter(v)-\!\!\!\!\!\sum_{w\in M_1(T)} \!\!\!ter(w)\right)=\frac{1}{2}(\sigma(T)-ex_1(T)),$$
and thus $\edim_f(T) \ge \frac{1}{2}(\sigma(T)-ex_1(T))$. 

\medskip

Next, we show that $\edim_f(T) \le \frac{1}{2}(\sigma(T)-ex_1(T))$. For $x\in V(T)$, let $h:V(T) \rightarrow [0,1]$ be a function defined by
\begin{equation*}
h(x)=\left\{
\begin{array}{ll}
\frac{1}{2} & \mbox{ if $x$ is a terminal vertex of an exterior major vertex $w\in M_2(T)$},\\
0 & \mbox{ otherwise.}
\end{array}\right.
\end{equation*}
Notice that $h(V(T))=\frac{1}{2}(\sigma(T)-ex_1(T))$. It suffices to show that $h$ is an edge resolving function of $T$. Let $e_1, e_2 \in E(T)$ with $e_1\neq e_2$. We consider three cases: (1) $e_1,e_2\in E(T_w)$ for some $w\in M_2(T)$; (2) $e_1\in E(T_w)$ and $e_2\not\in E(T_w)$ for some $w\in M_2(T)$; (3) $e_1,e_2\in E(T)-\cup_{w\in M_2(T)}E(T_w)$. In case~(1), there exist distinct terminal vertices, say $\ell$ and $\ell'$, of $w$ such that both $e_1$ and $e_2$ lie on the $\ell-\ell'$ path in $T$; thus, $R_e\{e_1, e_2\} \supseteq \{\ell, \ell'\}$ and $h(R_e\{e_1, e_2\}) \ge h(\ell)+h(\ell')=1$. In case~(2), by Lemma~\ref{lemma_tree2}, either $R_e\{e_1,e_2\} \supseteq V(T_w)$ or $R_e\{e_1,e_2\} \supseteq V(T_{w'})$ for some $w'\in M_2(T)-\{w\}$; thus, $h(R_e\{e_1, e_2\}) \ge \min\{h(V(T_w)), h(V(T_{w'}))\}\ge1$ since $w,w'\in M_2(T)$. So, we consider  case~(3). Note that $e_1\in E(T_y)$ for some $y\in M_1(T)$ or $e_1\not\in E(T_z)$ for any $z\in M(T)$; similarly, $e_2\in E(T_{y'})$ for some $y'\in M_1(T)$ or $e_2\not\in E(T_{z'})$ for any $z'\in M(T)$. If $\{e_1, e_2\} \subseteq E(T_v)$ for some $v\in M_1(T)$, then $d(v, e_1)\neq d(v, e_2)$, and there exist distinct $v', v''\in M_2(T)$ such that $v$ lies on the $v'-v''$ path in $T$ and $R_e\{e_1, e_2\} \supseteq V(T_{v'})\cup V(T_{v''})$; thus $h(R_e\{e_1, e_2\})\ge h(V(T_{v'}))+h(V(T_{v''}))\ge 2$. If $\{e_1, e_2\} \not\subseteq E(T_v)$ for any $v\in M(T)$, then there exist distinct $w_1, w_2\in M_2(T)$ such that both $e_1$ (or $y$) and $e_2$ (or $y'$) lie on the $w_1-w_2$ path in $T$; then $d(w_1, e_1)=d(w_1, e_2)$ and $d(w_2, e_1)=d(w_2, e_2)$ imply $e_1=e_2$, contradicting the assumption. So, $R_e\{e_1, e_2\} \supseteq V(T_{w_1})$ or $R_e\{e_1, e_2\} \supseteq V(T_{w_2})$, and thus $h(R_e\{e_1, e_2\}) \ge \min\{h(V(T_{w_1})), h(V(T_{w_2}))\}\ge 1$.~\hfill
\end{proof}

Next, we determine $\edim_f(C_n)$ for $n\ge 3$; we show that $\edim_f(C_n)=\dim_f(C_n)$.

\begin{proposition}
For $n\ge 3$, $\edim_f(C_n)=\left\{
\begin{array}{ll}
\frac{n}{n-1} & \mbox{ if $n$ is odd},\\
\frac{n}{n-2} & \mbox{ if $n$ is even}. 
\end{array}\right.$
\end{proposition}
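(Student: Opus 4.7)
The plan is to reduce all edge-distance computations in $C_n$ to cycle distances between edge midpoints, use this to count $|R_e\{e_i,e_j\}|$ exactly for every distinct pair of edges, and then bracket $\edim_f(C_n)$ by matching upper and lower bounds.

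Label $V(C_n)=\{v_0,\ldots,v_{n-1}\}$ cyclically and write $e_i=v_iv_{i+1}$ (subscripts mod $n$). View $C_n$ as a continuous cycle of circumference $n$, placing $v_k$ at position $k$ and edge $e_i$ at its midpoint $i+\tfrac12$. A direct case check yields the key identity $d(v_k,e_i)=c(k,i+\tfrac12)-\tfrac12$, where $c(\cdot,\cdot)$ denotes continuous cycle distance. Consequently $v_k\notin R_e\{e_i,e_j\}$ iff $v_k$ is equidistant (in the continuous sense) from $i+\tfrac12$ and $j+\tfrac12$, and the equidistant set of two distinct points on a cycle is precisely the two arc-midpoints $\tfrac{i+j+1}{2}$ and $\tfrac{i+j+1}{2}+\tfrac{n}{2}$ (mod $n$). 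Checking which of these are integers (hence vertices) is a short parity argument: for odd $n$ exactly one is an integer for every pair, and for even $n$ either both are integers (when $i+j$ is odd) or neither (when $i+j$ is even). Hence $|R_e\{e_i,e_j\}|=n-1$ in the odd case, and $|R_e\{e_i,e_j\}|\in\{n-2,n\}$ in the even case.

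The upper bound then follows from the uniform candidate $g\equiv \tfrac{1}{n-1}$ (odd $n$) or $g\equiv\tfrac{1}{n-2}$ (even $n$): in either case $g(R_e\{e_i,e_j\})\ge 1$ for every distinct pair, so $g$ is an edge resolving function with $g(V(C_n))$ equal to $\tfrac{n}{n-1}$ or $\tfrac{n}{n-2}$ respectively. For the matching lower bound, I take an arbitrary edge resolving function $g$ and restrict attention to consecutive edge pairs. In the odd case the common vertex $v_{i+1}$ of $e_i$ and $e_{i+1}$ lies outside $R_e\{e_i,e_{i+1}\}$ (since $d(v_{i+1},e_i)=0=d(v_{i+1},e_{i+1})$), which gives $g(V(C_n))-g(v_{i+1})\ge g(R_e\{e_i,e_{i+1}\})\ge 1$; summing over $i=0,\ldots,n-1$ and using $\sum_i g(v_{i+1})=g(V(C_n))$ produces $(n-1)g(V(C_n))\ge n$. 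In the even case the midpoint analysis also puts the antipode $v_{i+1+n/2}$ outside $R_e\{e_i,e_{i+1}\}$, and summing $g(V(C_n))-g(v_{i+1})-g(v_{i+1+n/2})\ge 1$ over $i=0,\ldots,\tfrac{n}{2}-1$, noting that the pairs $\{v_{i+1},v_{i+1+n/2}\}$ partition $V(C_n)$, yields $(\tfrac{n}{2}-1)g(V(C_n))\ge \tfrac{n}{2}$.

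The main technical obstacle is establishing the midpoint identity $d(v_k,e_i)=c(k,i+\tfrac12)-\tfrac12$ and the corresponding equidistant-set description; once those are in hand, the parity casework on $n$ and the double counting that produces the two lower bounds are essentially immediate.
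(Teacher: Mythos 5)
Your proof is correct and follows essentially the same route as the paper: a uniform edge resolving function of value $\frac{1}{n-1}$ (odd $n$) or $\frac{1}{n-2}$ (even $n$) for the upper bound, and the inequalities coming from consecutive edge pairs, $R_e\{e_i,e_{i+1}\}=V(C_n)-\{v_{i+1}\}$ for odd $n$ and $V(C_n)-\{v_{i+1},v_{i+1+n/2}\}$ for even $n$, summed to give the matching lower bound. Your midpoint/continuous-distance device is only a bookkeeping reformulation of the same case analysis, though it has the minor merit of making fully explicit the claim $|R_e\{e_i,e_j\}|\ge n-1$ (resp.\ $n-2$) for \emph{all} distinct pairs, which the paper asserts without detail when verifying the uniform function.
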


\begin{proof}
For $n\ge 3$, let $C_n$ be given by $u_0, u_1, \ldots, u_{n-1}, u_0$. Let $g:V(C_n) \rightarrow [0,1]$ be any edge resolving function of $C_n$. For each $i\in\{0, 1, \ldots, n-1\}$, we have the following: (i) if $n$ is odd, then $R_e\{u_iu_{i+1}, u_{i+1}u_{i+2}\}=V(C_n)-\{u_{i+1}\}$ and  $g(V(C_n))-g(u_{i+1}) \ge 1$, where the subscript is taken modulo $n$; (ii) if $n$ is even, then $R_e\{u_iu_{i+1}, u_{i+1}u_{i+2}\}=V(C_n)-\{u_{i+1}, u_{i+1+\frac{n}{2}}\}$ and $g(V(C_n))-g(u_{i+1})-g(u_{i+1+\frac{n}{2}})\ge 1$, where the subscript is taken modulo $n$. In each case, by summing over the $n$ inequalities, we have $(n-1)g(V(C_n))\ge n$ for an odd $n$, and $(n-2)g(V(C_n))\ge n$ for an even $n$. Thus, $\edim_f(C_n) \ge \frac{n}{n-1}$ if $n$ is odd, and $\edim_f(C_n) \ge \frac{n}{n-2}$ if $n$ is even. 

Now, let $h_0$ and $h_1$  be functions defined on $V(C_n)$ as follows: (i) if $n$ is odd, let $h_1(v)=\frac{1}{n-1}$ for each $v\in V(C_n)$; (ii) if $n$ is even, let $h_0(v)=\frac{1}{n-2}$ for each $v\in V(C_n)$. If $n$ is odd, $h_1$ is an edge resolving function of $C_n$ since $|R_e\{e_1, e_2\}| \ge n-1$ for any distinct $e_1, e_2\in E(C_n)$; thus, $\edim_f(C_n) \le h_1(V(C_n))=\frac{n}{n-1}$. If $n$ is even, $h_0$ is an edge resolving function of $C_n$ since $|R_e\{e_1, e_2\}| \ge n-2$ for any distinct $e_1, e_2\in E(C_n)$; thus, $\edim_f(C_n) \le h_0(V(C_n))=\frac{n}{n-2}$.~\hfill
\end{proof}

Next, for the Petersen graph $\mathcal{P}$, we show that $\edim_f(\mathcal{P})=\frac{5}{2}>\dim_f(\mathcal{P})$.

\begin{proposition}\label{edim_frac_petersen}
For the Petersen graph $\mathcal{P}$, $\edim_f(\mathcal{P})=\frac{5}{2}$.
\end{proposition}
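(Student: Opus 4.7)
The upper bound $\edim_f(\mathcal{P}) \le \frac{5}{2}$ will be witnessed by the constant function $g \equiv \frac{1}{4}$, which gives $g(V(\mathcal{P})) = \frac{10}{4} = \frac{5}{2}$. To check that $g$ is an edge resolving function, I must verify $|R_e\{e_1, e_2\}| \ge 4$ for every pair of distinct edges. Using the standard labeling of $\mathcal{P}$ (outer cycle $u_0, u_1, \ldots, u_4, u_0$; spokes $u_iv_i$; pentagram edges $v_iv_{i+2}$ mod $5$), edge-transitivity of $\mathcal{P}$ lets me fix $e_1 = u_0u_1$. The remaining $14$ edges fall into three classes: those sharing a vertex with $e_1$; those sharing no vertex with $e_1$ but joined to $e_1$ by some edge of $\mathcal{P}$; and those whose endpoints are all non-neighbors of $\{u_0, u_1\}$ (the last class consisting of just $u_3v_3$ and $v_2v_4$). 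A direct computation on one representative of each class yields $|R_e| = 6, 8, 4$ respectively, so $|R_e| \ge 4$ always.

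For the lower bound, call an unordered pair $\{e_1, e_2\}$ of distinct edges a \emph{far pair} if they are of the third type above, i.e., $e_1 \cap e_2 = \emptyset$ and no edge of $\mathcal{P}$ connects an endpoint of $e_1$ to an endpoint of $e_2$. Each edge of $\mathcal{P}$ belongs to exactly two far pairs (for $u_0u_1$ these are $\{u_0u_1, u_3v_3\}$ and $\{u_0u_1, v_2v_4\}$), so there are exactly $\frac{15 \cdot 2}{2} = 15$ far pairs in total. The structural heart of the argument is the claim that $R_e\{e_1, e_2\} = e_1 \cup e_2$ for every far pair, i.e., only the four endpoints resolve. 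Since $\mathcal{P}$ has diameter $2$, any vertex $x \notin e_1 \cup e_2$ satisfies $d(x, e_i) \in \{1, 2\}$, with $d(x, e_i) = 1$ iff $x$ is adjacent to an endpoint of $e_i$; a case check on one representative far pair shows that each such $x$ is adjacent to endpoints of both $e_1$ and $e_2$ or of neither, giving $d(x, e_1) = d(x, e_2)$. The full claim for all $15$ pairs then follows from transitivity of $\mathrm{Aut}(\mathcal{P})$ on far pairs.

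With the structural claim in hand, let $g$ be any edge resolving function of $\mathcal{P}$. Each of the $15$ far pairs gives the inequality $g(e_1 \cup e_2) \ge 1$, i.e., the sum of $g$ over the four endpoints is at least $1$. Summing all $15$ inequalities: every vertex $v \in V(\mathcal{P})$ lies on $3$ edges, and each such edge sits in exactly $2$ far pairs, so $v$ appears as an endpoint in exactly $3 \cdot 2 = 6$ of the inequalities. Hence the coefficient of $g(v)$ on the left is $6$ for every $v$, and we obtain $6 \, g(V(\mathcal{P})) \ge 15$, i.e., $g(V(\mathcal{P})) \ge \frac{5}{2}$. Combined with the upper bound this yields $\edim_f(\mathcal{P}) = \frac{5}{2}$.

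The main obstacle is the structural lemma $R_e\{e_1, e_2\} = e_1 \cup e_2$ for far pairs. Fortunately, the full automorphism group of $\mathcal{P}$ acts transitively on far pairs (a straightforward consequence of edge-transitivity together with the observation that the edge-stabilizer swaps the two far-pair partners), so this reduces to a single direct check for one representative such as $\{u_0u_1, u_3v_3\}$: the six non-endpoint vertices $u_2, u_4, v_0, v_1, v_2, v_4$ are checked one-by-one, and in each case both distances equal $1$ or both equal $2$.
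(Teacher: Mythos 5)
Your proposal is correct and takes essentially the same approach as the paper: the upper bound via the constant function $g\equiv\frac14$ together with the check that $|R_e\{e_1,e_2\}|\ge 4$ (reduced to adjacent pairs by edge-transitivity), and the lower bound via pairs of vertex-disjoint, non-linked edges whose $R_e$-set is exactly their four endpoints, followed by summing the resulting inequalities. The only cosmetic difference is that the paper sums over just the five outer--inner far pairs $\{u_iu_{i+1}, w_{i+2}w_{i+4}\}$ (each vertex appearing twice, giving $2g(V(\mathcal{P}))\ge 5$), whereas you symmetrize over all $15$ far pairs (each vertex appearing six times, giving $6g(V(\mathcal{P}))\ge 15$) and invoke transitivity of $\mathrm{Aut}(\mathcal{P})$ on far pairs to justify checking a single representative.
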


\begin{figure}[ht]
\centering
\begin{tikzpicture}[scale=.8, transform shape]

\node [draw, shape=circle, scale=.8] (1) at  (0,2) {};
\node [draw, shape=circle, scale=.8] (2) at  (-1.9, 0.65) {};
\node [draw, shape=circle, scale=.8] (3) at  (-1.36, -1.45) {};
\node [draw, shape=circle, scale=.8] (4) at  (1.36, -1.45) {};
\node [draw, shape=circle, scale=.8] (5) at  (1.9, 0.65) {};
\node [draw, shape=circle, scale=.8] (11) at  (0,0.9) {};
\node [draw, shape=circle, scale=.8] (22) at  (-0.85, 0.3) {};
\node [draw, shape=circle, scale=.8] (33) at  (-0.64, -0.65) {};
\node [draw, shape=circle, scale=.8] (44) at  (0.64, -0.65) {};
\node [draw, shape=circle, scale=.8] (55) at  (0.85, 0.3) {};

\node [scale=1] at (0,2.35) {$u_0$};
\node [scale=1] at (-2.3,0.7) {$u_1$};
\node [scale=1] at (-1.75,-1.5) {$u_2$};
\node [scale=1] at (1.75,-1.5) {$u_3$};
\node [scale=1] at (2.3,0.7) {$u_4$};
\node [scale=1] at (0.38,1) {$w_0$};
\node [scale=1] at (-0.9,0) {$w_1$};
\node [scale=1] at (-0.6,-1) {$w_2$};
\node [scale=1] at (0.6,-1) {$w_3$};
\node [scale=1] at (1,0) {$w_4$};

\draw(1)--(2)--(3)--(4)--(5)--(1);\draw(11)--(33)--(55)--(22)--(44)--(11);\draw(1)--(11);\draw(2)--(22);\draw(3)--(33); \draw(4)--(44);\draw(5)--(55);

\end{tikzpicture}
\caption{Labeling of the Petersen graph.}\label{fig_petersen}
\end{figure}
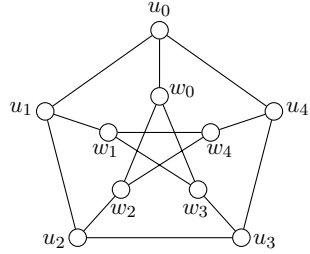

\begin{proof}
Let the vertices of the Petersen graph $\mathcal{P}$ be labeled as in Figure~\ref{fig_petersen}. 

First, we show that $\edim_f(\mathcal{P})\ge \frac{5}{2}$. Let $g:V(\mathcal{P}) \rightarrow [0,1]$ be any edge resolving function of $\mathcal{P}$. For each $i\in\{0,1,2,3,4\}$, we have $R_e\{u_iu_{i+1}, w_{i+2}w_{i+4}\}=\{u_i, u_{i+1}, w_{i+2}, w_{i+4}\}$, and thus $g(u_i)+g(u_{i+1})+g(w_{i+2})+g(w_{i+4}) \ge 1$, where the subscript is taken modulo $5$. By summing over the five inequalities, we have $2g(V(\mathcal{P}))\ge 5$, i.e., $g(V(\mathcal{P}))\ge \frac{5}{2}$. So, $\edim_f(\mathcal{P})\ge \frac{5}{2}$. 

Second, we show that $\edim_f(\mathcal{P})\le \frac{5}{2}$. Let $h:V(G) \rightarrow [0,1]$ be a function defined by $h(v)=\frac{1}{4}$ for each $v\in V(\mathcal{P})$; then $h(V(\mathcal{P}))=\frac{10}{4}=\frac{5}{2}$. It suffices to show that $h$ is an edge resolving function of $\mathcal{P}$. Let $e_1$ and $e_2$ be distinct edges in $\mathcal{P}$. If $e_1$ and $e_2$ are not adjacent in $\mathcal{P}$, then, clearly, $|R_e\{e_1, e_2\}|\ge 4$. So, suppose $e_1$ and $e_2$ are adjacent in $\mathcal{P}$. Since $\mathcal{P}$ is edge-transitive (see~\cite{petersen}), we may assume that $e_1=u_0u_1$; then $e_2\in\{u_1u_2, u_1w_1, u_0u_4, u_0w_0 \}$. If $e_2=u_1u_2$, then $R_e\{e_1,e_2\}=\{u_0, u_2, u_3, u_4, w_0, w_2\}$ with $|R_e\{e_1,e_2\}|=6$; similarly, for each $e_2\in\{u_1w_1, u_0u_4, u_0w_0 \}$, we have $|R_e\{e_1,e_2\}|=6$. So, for any distinct edges $e_1, e_2\in E(\mathcal{P})$, $|R_e\{e_1, e_2\}|\ge 4$ and $h(R_e\{e_1, e_2\}) \ge 4(\frac{1}{4})=1$; thus, $h$ is an edge resoling function of $\mathcal{P}$. So, $\edim_f(\mathcal{P}) \le h(V(\mathcal{P}))=\frac{5}{2}$.~\hfill
\end{proof}

Next, we determine $\edim_f(W_n)$ for the wheel graph $W_n$ of order $n\ge4$; we show that $\edim_f(W_n)>\dim_f(W_n)$ for $n\ge 5$.

\begin{proposition}
Let $W_{n}=K_1+C_{n-1}$ be the wheel graph of order $n \ge 4$. Then 
\begin{equation*}
\edim_f(W_{n})=\left\{
\begin{array}{ll}
\frac{n}{2} & \mbox{ if } n \in\{4,5\},\\
\frac{n-1}{2} & \mbox{ if } n\ge6. 
\end{array}\right.
\end{equation*}
\end{proposition}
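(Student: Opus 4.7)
The plan is to handle the three cases $n=4$, $n=5$, and $n \geq 6$ separately; in each I will establish a lower bound on $\edim_f(W_n)$ by summing carefully chosen inequalities of the form $g(R_e\{e_1,e_2\}) \geq 1$, and match it with an explicit edge resolving function. Throughout, let $c$ denote the hub of $W_n = K_1 + C_{n-1}$ and let $u_0, u_1, \ldots, u_{n-2}$ be the rim vertices, with indices taken modulo $n-1$.

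The central computational input, valid for every $n \geq 5$, is the identity $R_e\{cu_i, cu_j\} = \{u_i, u_j\}$ whenever $i \neq j$. Since $W_n$ has diameter at most two, one checks that $d(cu_i, c) = 0$, $d(cu_i, u_i) = 0$, and $d(cu_i, u_k) = 1$ for every $k \neq i$, so only $u_i$ and $u_j$ can distinguish $cu_i$ from $cu_j$. For any edge resolving function $g$ this yields $g(u_i) + g(u_j) \geq 1$ for every pair of distinct rim vertices. For $n \geq 6$ this alone suffices: summing over the $\binom{n-1}{2}$ rim pairs gives $(n-2)\sum_{i=0}^{n-2} g(u_i) \geq \binom{n-1}{2}$, hence $g(V(W_n)) \geq (n-1)/2$. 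For $n = 4$, I instead exploit that $W_4 = K_4$, whose four vertices are pairwise twins, so Observation~\ref{obs_twin}(d) directly supplies the six inequalities $g(v)+g(w) \geq 1$, summing to $\edim_f(W_4) \geq 2$.

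The $n = 5$ lower bound is the subtlest step. The six rim inequalities alone only give $\sum_i g(u_i) \geq 2$, short of the target $5/2$, so I must couple the hub to the rim. A direct computation shows $R_e\{cu_i, u_iu_{i+1}\} = \{c, u_{i+1}\}$, supplying $g(c) + g(u_k) \geq 1$ for every rim vertex $u_k$. Together with the six rim inequalities these constitute all ten pairwise constraints on $V(W_5)$, and summing yields $4\,g(V(W_5)) \geq 10$, i.e., $\edim_f(W_5) \geq 5/2$. In both cases $n \in \{4,5\}$ the matching upper bound $\edim_f(W_n) \leq n/2$ is Proposition~\ref{edim_frac_bounds}.

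For the matching upper bound when $n \geq 6$, I propose the function $h$ with $h(u_i) = 1/2$ for every rim vertex and $h(c) = 0$, of total weight $(n-1)/2$. Verifying that $h$ is an edge resolving function reduces to checking $h(R_e\{e_1,e_2\}) \geq 1$ in four case families: (i) two spokes $cu_i, cu_j$, where $R_e = \{u_i, u_j\}$ contributes exactly $1$; (ii) two rim edges sharing an endpoint $u_iu_{i+1}$ and $u_{i+1}u_{i+2}$, resolved by $u_i$ and $u_{i+2}$; (iii) two disjoint rim edges, resolved by all four endpoints; and (iv) a rim edge paired with a spoke, handled by a short distance computation in $C_{n-1}$. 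The main obstacle is the borderline subcase $n = 6$ of family~(iv) with $e_1 = u_iu_{i+1}$ and $e_2 = cu_i$: since $C_5$ is small, a careful computation is required to confirm $R_e \cap V(C_5) = \{u_{i+1}, u_{i+3}\}$, yielding weight exactly $1$ rather than failing. Elsewhere in family~(iv) the looser bound $|R_e \cap V(C_{n-1})| \geq 3$ suffices.
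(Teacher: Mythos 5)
Your proposal is correct and follows essentially the same route as the paper: the same identity $R_e\{cu_i,cu_j\}=\{u_i,u_j\}$ for the lower bounds, the same hub--rim coupling $R_e\{cu_i,u_iu_{i+1}\}=\{c,u_{i+1}\}$ for $n=5$, and the same extremal function ($1/2$ on the rim, $0$ on the hub) with the same case analysis for $n\ge 6$. The only differences are cosmetic: you sum over all $\binom{n-1}{2}$ rim pairs instead of the $n-1$ consecutive ones, obtain $n=4$ from the twin observation rather than from $\dim_f(W_4)=2$, and for $n=5$ sum all ten inequalities directly instead of the paper's weighted combination.
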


\begin{proof}
Let $n\ge 4$. Let $C_{n-1}$ be given by $u_0, u_1, \ldots, u_{n-2}, u_0$, and let $W_n$ be the wheel graph obtained from disjoint union of $C_{n-1}$ and $K_1$ by joining an edge between the vertex $v$ of $K_1$ and each vertex of $C_{n-1}$. Let $g: V(W_n) \rightarrow [0,1]$ be any edge resolving function of $W_n$.

First, suppose $n=4$. Then $\edim_f(W_4)=2$ by Theorem~\ref{dim_frac_graph}(d) and Theorem~\ref{edim_frac_n2}(a). 

Second, suppose $n=5$. For each $i\in\{0,1,2,3\}$, we have the following: (i) $R_e\{vu_i, vu_{i+1}\}=\{u_i, u_{i+1}\}$ and $g(u_{i})+g(u_{i+1}) \ge 1$, where the subscript is taken modulo $4$; (ii) $R_e\{u_iv, u_iu_{i+1}\}=\{v, u_{i+1}\}$ and $g(v)+g(u_{i+1}) \ge 1$, where the subscript is taken modulo $4$. By summing over the four inequalities of (i) and (ii), respectively, we have $2 \sum_{i=0}^{3} g(u_i)\ge 4$ ($\spadesuit$) and $4g(v)+\sum_{i=0}^{3} g(u_i) \ge 4$ ($\maltese$). Now, if we multiply $\frac{3}{2}$ to the inequality ($\spadesuit$) and add it to the inequality ($\maltese$), then we have $4 g(V(W_5)) \ge 10$, i.e., $g(V(W_5))\ge \frac{5}{2}$; thus, $\edim_f(W_5) \ge \frac{5}{2}$. By Proposition~\ref{edim_frac_bounds}, $\edim_f(W_5)=\frac{5}{2}$.  

Next, suppose $n\ge 6$. Note that, for each $i\in\{0,1,\ldots, n-2\}$, $R_e\{vu_i, vu_{i+1}\}=\{u_i, u_{i+1}\}$ and thus $g(u_i)+g(u_{i+1}) \ge 1$, where the subscript is taken modulo $(n-1)$. By summing over the $(n-1)$ inequalities, we have $2g(V(W_n))\ge n-1$; thus, $g(V(W_n)) \ge \frac{n-1}{2}$, which implies $\edim_f(W_n) \ge \frac{n-1}{2}$. Now, let $h:V(G) \rightarrow [0,1]$ be a function defined by $h(v)=0$ and $h(u_i)=\frac{1}{2}$ for each $i \in \{0,1,\ldots, n-2\}$; then $h(V(W_n))=\frac{n-1}{2}$. We show that $h$ is an edge resolving function of $W_n$. Let $e_1, e_2\in E(W_n)$ with $e_1\neq e_2$. If $e_1$ and $e_2$ are not adjacent in $W_n$, then $|R_e\{e_1, e_2\}| \ge 4$. If $e_1$ and $e_2$ are adjacent in $W_n$, then $R_e\{vu_i, vu_j\} \supseteq \{u_i, u_j\}$, $R_e\{u_iu_{i+1}, u_iu_{i-1}\} \supseteq \{u_{i+1}, u_{i-1}\}$, and $R_e\{u_iv, u_iu_{i+1}\} \supseteq \{v, u_{i+1}, u_{i+3}\}$, where the subscript is taken modulo $(n-1)$. In each case, $|R_e\{e_1, e_2\} \cap V(C_{n-1})| \ge 2$, and thus $h(R_e\{e_1, e_2\}) \ge 2(\frac{1}{2})=1$. So, $h$ is an edge resolving function of $W_n$; thus, $\edim_f(W_n) \le h(V(W_n))=\frac{n-1}{2}$. Therefore, $\edim_f(W_n)=\frac{n-1}{2}$ for $n\ge 6$.~\hfill 
\end{proof}

Next, we determine $\edim_f(G)$ when $G=K_{a_1,a_2, \ldots, a_k}$ is a complete $k$-partite graph of order $n=\sum_{i=1}^{k}a_i\ge3$; we show that $\edim_f(G) \ge \dim_f(G)$, where $\edim_f(G) > \dim_f(G)$ if and only if $k\ge 3$ and $a_i=1$ for exactly one $i\in\{1,2,\ldots, k\}$.

\begin{proposition}\label{edim_frac_kpartite}
For $k\ge 2$, let $G$ be a complete $k$-partite graph of order $n\ge 3$. Then 
\begin{equation}
\edim_f(G)=\left\{
\begin{array}{ll}
\frac{n-1}{2} & \mbox{ if $G=K_{1, n-1}$},\\
\frac{n}{2} & \mbox{ otherwise}. 
\end{array}\right.
\end{equation}
\end{proposition}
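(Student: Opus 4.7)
My plan is to separate the star case $G=K_{1,n-1}$ from the non-star case, and within the non-star case to split by $s$, the number of singleton partite sets. In the non-star case the upper bound $\edim_f(G)\le n/2$ is immediate from Proposition~\ref{edim_frac_bounds}, so the substance is in the lower bounds.

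For $G=K_{1,n-1}$, the graph is a tree with $\sigma(T)=n-1$ leaves and a single exterior major vertex (the centre), whose terminal degree $n-1\ge 2$ forces $ex_1(T)=0$, so Proposition~\ref{edim_frac_tree} yields $\edim_f(G)=(n-1)/2$. In the non-star case with $s=0$ or $s\ge 2$, I would exhibit $G$ as a member of $H[\mathcal{K}\cup\overline{\mathcal{K}}]$: if $s=0$, take $H=K_k$ with each vertex blown up to $\overline{K}_{a_i}$; if $s\ge 2$, merge the singletons into a clique $K_s$ (they are pairwise adjacent in $G$) and take $H=K_{k-s+1}$ with replacements $K_s$ and $\overline{K}_{a_j}$ for the non-singleton classes. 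Theorem~\ref{n_2} then gives $\dim_f(G)=n/2$, and Theorem~\ref{edim_frac_n2}(a) upgrades this to $\edim_f(G)=n/2$.

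The main case is $s=1$ with $k\ge 3$; write $V_1=\{v\}$ and let $g$ be an arbitrary edge resolving function. Two ingredients combine to give $g(V(G))\ge n/2$. First, Observation~\ref{obs_twin}(d) applied to pairs of twins inside each $V_i$ ($i\ge 2$), averaged over the $\binom{|V_i|}{2}$ resulting inequalities, yields $g(V_i)\ge |V_i|/2$. Second, to couple $g(v)$ to the rest, for each $b\in V_j$ with $j\ge 2$ I use $k\ge 3$ to pick $a\in V_i$ with $i\ge 2$ and $i\ne j$, and consider the edges $e_1=va$ and $e_2=ab$; a short case check on $z\in V(G)$ (exploiting $\diam(G)=2$ and that twins lie at distance $2$) shows $R_e\{va,ab\}=\{v,b\}$, so $g(v)+g(b)\ge 1$. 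Setting $x:=g(v)$ gives $g(V_i)\ge |V_i|\max\{1/2,\,1-x\}$ for every $i\ge 2$; a short split into $x\le 1/2$ (where $g(V(G))\ge (n-1)-(n-2)x$) and $x\ge 1/2$ (where $g(V(G))\ge x+(n-1)/2$) shows the resulting minimum equals $n/2$, attained at $x=1/2$.

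The delicate step is the identification of the edge pair $\{va,ab\}$ with $R_e=\{v,b\}$: the twin inequalities alone only deliver $(n-1)/2$, and bridging the missing $1/2$ requires a genuinely new inequality coupling $g(v)$ to each vertex of a non-singleton partite set. Once this pairing is in hand the rest is bookkeeping, and the three constructions in the second paragraph dispose of every subcase in which $G\in H[\mathcal{K}\cup\overline{\mathcal{K}}]$.
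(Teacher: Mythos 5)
Your proposal is correct and follows essentially the same route as the paper: the star case via Proposition~\ref{edim_frac_tree}, the $s\neq 1$ case via $\dim_f(G)=\frac{n}{2}$ together with Theorem~\ref{edim_frac_n2}(a), and the $s=1$, $k\ge 3$ case via the twin inequalities inside each non-singleton class combined with a coupling inequality $g(v)+g(b)\ge 1$ coming from a pair of adjacent edges whose symmetric difference resolves to exactly $\{v,b\}$. The only cosmetic differences are that the paper obtains the coupling from $R_e\{z_1v,z_1x_i\}=\{v,x_i\}$ (shared endpoint in a third partite class) and aggregates it with $V_2$ alone to get $g(V_1)+g(V_2)\ge\frac{a_1+a_2}{2}$, whereas you place the shared endpoint $a$ in a second class and optimize globally over $x=g(v)$; both computations yield the bound $\frac{n}{2}$.
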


\begin{proof}
For $k \ge 2$, let $G=K_{a_1, a_2,\ldots, a_{k}}$ be a complete $k$-partite graph of order $n=\sum_{i=1}^{k}a_i\ge 3$, and let $s$ be the number of partite sets of $V(G)$ consisting of exactly one element. Let $V(G)$ be partitioned into $V_1, V_2, \ldots, V_{k}$ with $|V_i|=a_i$ for each $i \in \{1,2,\ldots, k\}$; further, let $a_1 \le a_2 \le \ldots \le a_{k}$. We consider two cases.

\textbf{Case 1: $s \neq 1$.} In this case, $\dim_f(G)=\frac{n}{2}$ by Theorem~\ref{dim_frac_graph}(e). By Theorem~\ref{edim_frac_n2}(a), $\edim_f(G)=\frac{n}{2}$.

\textbf{Case 2: $s=1$.} First, suppose $k=2$; then $G=K_{1,n-1}$. By Proposition~\ref{edim_frac_tree}, $\edim_f(K_{1,n-1})=\frac{n-1}{2}$. 

Second, suppose $k\ge 3$; we show that $\edim_f(G)=\frac{n}{2}$. Let $g:V(G) \rightarrow [0,1]$ be any edge resolving function of $G$. Let $V_1=\{v\}$, $V_2=\{x_1, x_2, \ldots, x_{\alpha}\}$ and $V_3=\{z_1, z_2, \ldots, z_{\beta}\}$, where $\alpha=a_2\ge2$ and $\beta=a_3\ge2$. We note the following: (1) for each $i\in\{1,2,\ldots, \alpha\}$, $R_e\{z_1v, z_1x_i\}=\{v, x_i\}$ and $g(v)+g(x_i)\ge1$; (2) for any distinct $i,j\in\{1,2,\ldots,\alpha\}$, $R_e\{z_1x_i, z_1x_j\}=\{x_i,x_j\}$ and $g(x_i)+g(x_j)\ge 1$. So, we have $\alpha \cdot g(v)+\sum_{i=1}^{\alpha}g(x_i)\ge \alpha$ by summing over the $\alpha$ inequalities from (1), and $(\alpha-1)\sum_{i=1}^{\alpha}g(x_i) \ge {\alpha \choose 2}$ by summing over the ${\alpha \choose 2}$ inequalities from (2); thus, $\alpha(g(v)+\sum_{i=1}^{\alpha}g(x_i)) \ge \alpha+{\alpha \choose 2}$, i.e., $g(V_1)+g(V_2)=g(v)+\sum_{i=1}^{\alpha}g(x_i)\ge \frac{1+\alpha}{2}=\frac{a_1+a_2}{2}$. We also note that, for any distinct $i,j\in \{1,2,\ldots, \beta\}$, $g(z_i)+g(z_j)\ge 1$ by Observation~\ref{obs_twin}(d). By summing over the ${\beta \choose 2}$ inequalities, we have $(\beta-1)\sum_{i=1}^{\beta} g(z_i)\ge {\beta \choose 2}$, i.e., $g(V_3)=\sum_{i=1}^{\beta}g(z_i) \ge \frac{\beta}{2}=\frac{a_3}{2}$. Similarly, if $k\ge4$, then we have $g(V_i)\ge \frac{a_i}{2}$ for each $i\in\{4,\ldots, k\}$. Thus, $g(V(G))=\sum_{i=1}^{k}g(V_i)\ge\sum_{i=1}^{k}\frac{a_i}{2}=\frac{n}{2}$, and hence $\edim_f(G)\ge \frac{n}{2}$. Since $\edim_f(G)\le\frac{n}{2}$ by Proposition~\ref{edim_frac_bounds}, we have $\edim_f(G)=\frac{n}{2}$ for $k\ge 3$.~\hfill
\end{proof}

Next, for the Cartesian product of two paths $P_s \square P_t$, where $s,t\ge2$, we show that $\edim_f(P_s\square P_t)=2=\dim_f(P_s \square P_t)$.

\begin{proposition}\emph{\cite{edim}}\label{edim_grid}
For $s,t \ge 2$, $\edim(P_s \square P_t)=2$.
\end{proposition}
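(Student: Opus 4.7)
The plan is to establish $\edim(P_s\square P_t)\le 2$ and $\edim(P_s\square P_t)\ge 2$ separately.

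For the lower bound, I would invoke Proposition~\ref{edim_frac_1}: since $P_s\square P_t$ contains a $4$-cycle whenever $s,t\ge2$, it is not a path, so Proposition~\ref{edim_frac_1} gives $\edim_f(P_s\square P_t)>1$. Combining with Observation~\ref{obs_frac}(b) yields $\edim(P_s\square P_t)\ge\edim_f(P_s\square P_t)>1$, hence $\edim(P_s\square P_t)\ge 2$.

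For the upper bound, the plan is to exhibit a two-vertex edge resolving set. Labelling the vertices as $(i,j)$ with $1\le i\le s$ and $1\le j\le t$, one has $d((a,b),(c,d))=|a-c|+|b-d|$. I would take $S=\{(1,1),(1,t)\}$. A direct application of $d(e,v)=\min\{d(x,v),d(y,v)\}$ gives
\[
\code_S\bigl((i,j)(i,j+1)\bigr)=(i+j-2,\ i+t-j-2) \qquad \text{for horizontal edges},
\]
\[
\code_S\bigl((i,j)(i+1,j)\bigr)=(i+j-2,\ i+t-j-1) \qquad \text{for vertical edges}.
\]
The coordinate sum equals $2i+t-4$ on a horizontal edge and $2i+t-3$ on a vertical edge, so the parities of these sums differ; consequently a horizontal edge can never share a code with a vertical one. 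Within each class, the two coordinates of $\code_S(e)$ determine $i+j$ and $i-j$ (and hence $(i,j)$) uniquely, so distinct edges of the same type also receive distinct codes. Therefore $S$ is an edge resolving set, and $\edim(P_s\square P_t)\le 2$.

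The only real obstacle is the bookkeeping behind the four distance formulas: one must keep track of which endpoint of a given edge is the closer of the two to $(1,1)$ or to $(1,t)$ when taking the minimum defining $d(e,v)$. Once the formulas are in hand, the parity observation together with the invertibility of $(i,j)\mapsto(i+j,i-j)$ finishes the upper bound in one step.
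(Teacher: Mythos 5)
The paper does not prove this statement at all; it is quoted from~\cite{edim} as a known result, so there is no internal proof to compare against. Your argument is a correct, self-contained proof. The distance formulas check out: with $d((a,b),(c,d))=|a-c|+|b-d|$ one gets $\code_S\bigl((i,j)(i,j+1)\bigr)=(i+j-2,\,i+t-j-2)$ and $\code_S\bigl((i,j)(i+1,j)\bigr)=(i+j-2,\,i+t-j-1)$ for $S=\{(1,1),(1,t)\}$, the parity of the coordinate sum separates the two edge types, and within each type the pair $(i+j,\,i-j)$ recovers $(i,j)$, so all codes are distinct. The lower bound via Proposition~\ref{edim_frac_1} and Observation~\ref{obs_frac}(b) is valid and non-circular (that proposition is proved independently of grids), though it is heavier than necessary: since every vertex $w$ of $P_s\square P_t$ with $s,t\ge 2$ has degree at least two, any two edges incident to $w$ both satisfy $d(e,w)=0$, so no single vertex is an edge resolving set and $\edim(P_s\square P_t)\ge 2$ immediately. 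Either way, the proposal is complete.
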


\begin{proposition}
For $s,t \ge 2$, $\edim_f(P_s \square P_t)=2$.
\end{proposition}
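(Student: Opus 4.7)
The upper bound $\edim_f(P_s \square P_t) \le 2$ is immediate from Observation~\ref{obs_frac}(b) together with Proposition~\ref{edim_grid}, which gives $\edim(P_s \square P_t) = 2$. The substantive task is therefore to prove $\edim_f(P_s \square P_t) \ge 2$.

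My plan is an LP-duality style averaging argument using four pairs of edges, one at each corner of the grid. Label the vertices $v_{i,j}$ with $1 \le i \le s$ and $1 \le j \le t$. At the corner $v_{1,1}$ I take the two incident edges $v_{1,1}v_{1,2}$ and $v_{1,1}v_{2,1}$, and analogous incident-edge pairs at $v_{1,t}$, $v_{s,1}$, and $v_{s,t}$. Using the Manhattan distance formula in $P_s \square P_t$, a direct computation will show that $R_e$ for the corner-$v_{1,1}$ pair is the ``$L$-shape'' consisting of row~1 and column~1 with the corner $v_{1,1}$ itself removed, and the other three $R_e$-sets are the analogous $L$-shapes at their respective corners.

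The combinatorial heart of the argument is that each of the four sides of the grid (row~1, row~$s$, column~1, column~$t$) is contained in exactly two of these four $L$-shapes. Consequently every boundary non-corner vertex lies in exactly the two $R_e$-sets whose $L$-shape contains its side; each corner lies in exactly the two $L$-shapes based at its two orthogonally-adjacent corners (for instance, $v_{1,1}$ lies in the $L$'s at $v_{1,t}$ and $v_{s,1}$, but not in the $L$'s at $v_{1,1}$ or $v_{s,t}$); and every interior vertex lies in none of the four $R_e$-sets. Summing the four inequalities $g(R_e) \ge 1$ for any edge resolving function $g$ then gives
\begin{equation*}
4 \;\le\; \sum_{\text{4 pairs}} g(R_e) \;=\; \sum_{v \in V(G)} g(v) \cdot \#\{\text{$L$'s containing } v\} \;\le\; 2\, g(V(G)),
\end{equation*}
so $g(V(G)) \ge 2$, and taking the infimum over $g$ yields $\edim_f(P_s \square P_t) \ge 2$.

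The main obstacle I anticipate is the degenerate case $s = 2$ or $t = 2$, in which some of the eight edges defining the four pairs coincide (and when $s = t = 2$ the graph collapses to $C_4$). The four pairs themselves, however, remain pairs of distinct edges in all of these cases, and I expect the $L$-shape description together with the ``at most two'' multiplicity count to survive verbatim. I would therefore carry out the $R_e$-set computation once in a unified way, and then check the ``at most two'' property by vertex location (interior, boundary-non-corner, corner) rather than branching on the shape of the grid.
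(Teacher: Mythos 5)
Your proposal is correct and follows essentially the same route as the paper: the upper bound via Observation~\ref{obs_frac}(b) and Proposition~\ref{edim_grid}, and the lower bound by summing the four inequalities $g(R_e)\ge 1$ for the two incident edges at each corner, whose $R_e$-sets are exactly the ``L-shapes'' you describe, with every boundary vertex covered exactly twice and every interior vertex not at all. The only cosmetic difference is that the paper writes out the four sets explicitly and sums over the boundary set $W$ rather than phrasing the count as a per-vertex multiplicity, and your attention to the degenerate cases $s=2$ or $t=2$ is a welcome extra check that the paper leaves implicit.
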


\begin{proof}
Let $s,t\ge 2$. Let the vertices of $P_s \square P_t$ be labeled as in Figure~\ref{fig_grid}, and let $W=\{u_{i,j}: i \in \{1,s\} \mbox{ or } j \in \{1,t\}\} \subseteq V(P_s \square P_t)$. We note the following: (i) $R_e\{u_{1,1}u_{2,1}, u_{1,1}u_{1,2}\}=\{u_{i,1}: 2 \le i \le s\} \cup\{u_{1,j}: 2 \le j \le t\}$; (ii) $R_e\{u_{1,t}u_{2,t},u_{1,t}u_{1,t-1}\}=\{u_{i,t}: 2 \le i \le s\} \cup \{u_{1,j}: 1 \le j \le t-1\}$; (iii) $R_e\{u_{s,1}u_{s-1,1}, u_{s,1}u_{s,2}\}=\{u_{i,1}: 1 \le i \le s-1\} \cup \{u_{s,j}: 2 \le j\le t\}$; (iv) $R_e\{u_{s,t}u_{s-1,t}, u_{s,t}u_{s,t-1}\}=\{u_{i,t}: 1 \le i \le s-1\} \cup \{u_{s,j}: 1 \le j \le t-1\}$. If $g: V(P_s \square P_t) \rightarrow [0,1]$ is any egde resolving function of $P_s \square P_t$, then $\sum_{i=2}^{s}g(u_{i,1})+\sum_{j=2}^{t}g(u_{1,j}) \ge 1$, $\sum_{i=2}^{s} g(u_{i,t})+\sum_{j=1}^{t-1}g(u_{1,j}) \ge1$, $\sum_{i=1}^{s-1} g(u_{i,1})+\sum_{j=2}^{t} g(u_{s,j})\ge 1$ and $\sum_{i=1}^{s-1}g(u_{i,t})+\sum_{j=1}^{t-1}g(u_{s,j}) \ge 1$. By summing over the four inequalities, we have $2g(W)\ge 4$. So, $g(V(P_s \square P_t)) \ge g(W)=2$, and thus $\edim_f(P_s \square P_t) \ge 2$. Since $\edim_f(P_s \square P_t) \le 2$ by Observation~\ref{obs_frac}(b) and Proposition~\ref{edim_grid}, we have $\edim_f(P_s  \square P_t)=2$.~\hfill
\end{proof}

\begin{figure}[ht]
\centering
\begin{tikzpicture}[scale=.65, transform shape]

\node [draw, shape=circle] (4) at  (0,4.5) {};
\node [draw, shape=circle] (3) at  (0,3) {};
\node [draw, shape=circle] (2) at  (0,1.5) {};
\node [draw, shape=circle] (1) at  (0,0) {};
\node [draw, shape=circle] (44) at  (1.5,4.5) {};
\node [draw, shape=circle] (33) at  (1.5,3) {};
\node [draw, shape=circle] (22) at  (1.5,1.5) {};
\node [draw, shape=circle] (11) at  (1.5,0) {};
\node [draw, shape=circle] (444) at  (3,4.5) {};
\node [draw, shape=circle] (333) at  (3,3) {};
\node [draw, shape=circle] (222) at  (3,1.5) {};
\node [draw, shape=circle] (111) at  (3,0) {};
\node [draw, shape=circle] (4444) at  (4.5,4.5) {};
\node [draw, shape=circle] (3333) at  (4.5,3) {};
\node [draw, shape=circle] (2222) at  (4.5,1.5) {};
\node [draw, shape=circle] (1111) at  (4.5,0) {};
\node [draw, shape=circle] (44444) at  (6,4.5) {};
\node [draw, shape=circle] (33333) at  (6,3) {};
\node [draw, shape=circle] (22222) at  (6,1.5) {};
\node [draw, shape=circle] (11111) at  (6,0) {};
\node [draw, shape=circle] (444444) at  (7.5,4.5) {};
\node [draw, shape=circle] (333333) at  (7.5,3) {};
\node [draw, shape=circle] (222222) at  (7.5,1.5) {};
\node [draw, shape=circle] (111111) at  (7.5,0) {};

\draw(1)--(2)--(3)--(4);
\draw(11)--(22)--(33)--(44);
\draw(111)--(222)--(333)--(444);
\draw(1111)--(2222)--(3333)--(4444);
\draw(11111)--(22222)--(33333)--(44444);
\draw(111111)--(222222)--(333333)--(444444);
\draw(1)--(11)--(111)--(1111)--(11111)--(111111);
\draw(2)--(22)--(222)--(2222)--(22222)--(222222);
\draw(3)--(33)--(333)--(3333)--(33333)--(333333);
\draw(4)--(44)--(444)--(4444)--(44444)--(444444);

\node [scale=1.3] at (-0.7,0) {$u_{1,1}$};
\node [scale=1.3] at (-0.7,1.5) {$u_{1,2}$};
\node [scale=1.3] at (-0.7,3) {$u_{1,3}$};
\node [scale=1.3] at (-0.7,4.5) {$u_{1,4}$};

\node [scale=1.3] at (1.5,-0.4) {$u_{2,1}$};
\node [scale=1.3] at (2.05,1.75) {$u_{2,2}$};
\node [scale=1.3] at (2.05,3.25) {$u_{2,3}$};
\node [scale=1.3] at (1.5,4.9) {$u_{2,4}$};

\node [scale=1.3] at (3,-0.4) {$u_{3,1}$};
\node [scale=1.3] at (3.55,1.75) {$u_{3,2}$};
\node [scale=1.3] at (3.55,3.25) {$u_{3,3}$};
\node [scale=1.3] at (3,4.9) {$u_{3,4}$};

\node [scale=1.3] at (4.5,-0.4) {$u_{4,1}$};
\node [scale=1.3] at (5.05,1.75) {$u_{4,2}$};
\node [scale=1.3] at (5.05,3.25) {$u_{4,3}$};
\node [scale=1.3] at (4.5,4.9) {$u_{4,4}$};

\node [scale=1.3] at (6,-0.4) {$u_{5,1}$};
\node [scale=1.3] at (6.55,1.75) {$u_{5,2}$};
\node [scale=1.3] at (6.55,3.25) {$u_{5,3}$};
\node [scale=1.3] at (6,4.9) {$u_{5,4}$};

\node [scale=1.3] at (8.2,0) {$u_{6,1}$};
\node [scale=1.3] at (8.2,1.5) {$u_{6,2}$};
\node [scale=1.3] at (8.2,3) {$u_{6,3}$};
\node [scale=1.3] at (8.2,4.5) {$u_{6,4}$};

\end{tikzpicture}
\caption{Labeling of $P_6 \square P_4$.}\label{fig_grid}
\end{figure}
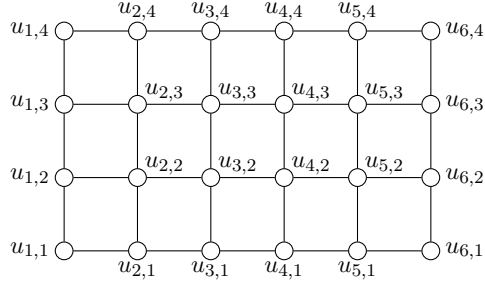

We conclude this section with a few open problems and  a remark showing that $\edim(G)-\edim_f(G)$ can be arbitrarily large.

\begin{question}
Are there graphs $G$ satisfying $\dim(G)=\edim(G)$ and $\dim_f(G)\neq \edim_f(G)$? Are there graphs $H$ satisfying $\dim(H)\neq\edim(H)$ and $\dim_f(H)=\edim_f(H)$?
\end{question}

\begin{question}
Does $\dim(G)>\edim(G)$ imply $\dim_f(G) \ge \edim_f(G)$? Similarly, does $\dim(G)<\edim(G)$ imply $\dim_f(G) \le \edim_f(G)$?
\end{question}

\begin{remark}
There exists a graph $G$ such that $\edim(G)-\edim_f(G)$ can be arbitrarily large. For $n\ge 3$, $\edim(K_n)=n-1$ (see~\cite{edim}) and $\edim_f(K_n)=\frac{n}{2}$ by Proposition~\ref{edim_frac_kpartite}; thus, $\edim(K_n)-\edim_f(K_n)=n-1-\frac{n}{2}=\frac{n-2}{2} \rightarrow \infty$ as $n \rightarrow \infty$. For another example, let $T$ be a tree with exterior major vertices $v_1, v_2, \ldots, v_k$ such that $ter(v_i)=\alpha\ge 3$ for each $i\in\{1,2,\ldots, k\}$, where $k\ge1$. Then $\edim(T)=\dim(T)=(\alpha-1)k$ (see~\cite{edim}) and $\edim_f(T)=\dim_f(T)=(\frac{\alpha}{2})k$ by Proposition~\ref{edim_frac_tree}; thus, $\edim(T)-\edim_f(T)=(\frac{\alpha-2}{2})k \rightarrow \infty$ when $\alpha \rightarrow \infty$ or $k \rightarrow \infty$.
 \end{remark}




\end{document}